\newtheorem{theorem}{Theorem}[section]
\newtheorem{lemma}[theorem]{Lemma}
\newtheorem{definition}{Definition}[section]
\newtheorem{prop}[theorem]{Proposition}
\theoremstyle{definition}
\newtheorem{remark}{Remark}[section]
\newtheorem{example}[remark]{Example}
\numberwithin{equation}{section}
\newcommand{\norm}[1]{\left\Vert#1\right\Vert}
\newcommand{\abs}[1]{\left\vert#1\right\vert}
\newcommand{\average}[1]{\ensuremath{\{\!\!\{#1\}\!\!\}} }
\newcommand{\jump}[1]{\ensuremath{[\![#1]\!]} }
\begin{document}


\title{Dissipative and conservative local discontinuous Galerkin methods for the Fornberg-Whitham type equations}
\date{}
\author{Qian Zhang\thanks{School of Mathematical Sciences, University of Science and Technology of China, Hefei, Anhui 230026, P.R. China.  E-mail: gelee@mail.ustc.edu.cn. Current address: Department of Applied Mathematics, The Hong Kong Polytechnic University, Hong Kong, E-mail: qian77.zhang@polyu.edu.hk. Research supported by the 2019 Hong Kong Scholar Program G-YZ2Y.}
\and Yan Xu \thanks{Corresponding author. School of Mathematical Sciences, University of Science and Technology of China, Hefei, Anhui 230026, P.R. China.  E-mail: yxu@ustc.edu.cn. Research supported by National Numerical Windtunnel grants NNW2019ZT4-B08,  Science Challenge Project TZZT2019-A2.3, NSFC grants 11722112.}
\and Chi-Wang Shu \thanks{Division of Applied Mathematics, Brown University, Providence, RI 02912, USA. Email:
chi-wang\_shu@brown.edu. Research supported by NSF grant DMS-1719410.}
}

\maketitle

\begin{abstract}
In this paper, we construct high order energy dissipative and conservative local discontinuous Galerkin methods for the Fornberg-Whitham type equations. We give the proofs for the dissipation and conservation for related conservative quantities. The corresponding error estimates are proved for the proposed schemes. The capability of our schemes for different types of solutions is shown via several numerical experiments. The dissipative schemes have good behavior for shock solutions, while for
a long time approximation, the conservative schemes can reduce the shape error and the decay of amplitude significantly.
\end{abstract}
\vskip 1 cm

\textbf{Key Words}: discontinuous Galerkin method, Fornberg-Whitham type equation, dissipative scheme, conservative scheme, error estimates.

\section{Introduction}
The Fornberg-Whitham type equation we study in this paper is given by
\begin{align}\label{eqn:FW_intro}
u_t + f(u)_x + (1-\partial_x^2)^{-1}u_x = 0, \ I \in [a,b], \ t>0
\end{align}
or its equivalent form
\begin{equation}\label{eqn:FW_intro2}
u_t - u_{xxt} + f(u)_x + u_x  = f(u)_{xxx},
\end{equation}
by operating $(1-\partial_x^2)$ on \eqref{eqn:FW_intro}. We consider the nonlinear term $f(u) = \frac{1}{p}u^p$,
where $p \geq 2$ is an integer. When the parameter $p = 2$, the equation \eqref{eqn:FW_intro} becomes the Fornberg-Whitham equation derived in \cite{Whitham_1967_MPS} as a nonlinear dispersive wave equation. There are three conservative quantities for the Fornberg-Whitham type equation
\begin{equation}\label{conservative quantity}
E_0 = \int_I u dx, \quad E_1 = \int_I (u - u_{xx}) dx,\quad E_2 = \int_I u^2 dx,
\end{equation}
where the quantity $E_0$ is called mass, and $E_2$ is energy.

Many mathematical properties of the Fornberg-Whitham equation have been discussed, this equation was first proposed for studying the qualitative behavior of wave breaking in \cite{Whitham_1967_MPS}. Some investigation of wave breaking conditions can be found in \cite{Itasaka_2018_arxiv, Hoermann_2017_JDE}. Note that the Fornberg-Whitham equation is also called Burgers-Poisson equation in \cite{Fellner_2004_SIAM}. There has been lots of work focusing on finding the traveling wave solutions in \cite{Zhou2010_NA, Zhou2008_JMAA}. It admits a wave of greatest height, as a peaked limiting form of the traveling wave solution \cite{Fornberg_1978_MPS}. Recently, some well-posedness results are proposed in \cite{Holmes_2016_JDE, Holmes_2017_JDE}.  There are not many numerical schemes for the Fornberg-Whitham type equation. In \cite{Hormann_2018_arxiv}, the finite difference method is adopted to solve the shock solution. The authors did some valuable numerical analysis by the discontinuous Galerkin method in \cite{Liu2015_NM}, in which the comparisons has been made between the conservative scheme and dissipative scheme, as well as theoretical analysis.

 The discontinuous Galerkin (DG) method was first introduced by Reed and Hill in 1973 \cite{Reed1973} for solving steady-state linear hyperbolic equations. The key point of this method is the design of suitable inter-element boundary treatments (so-called numerical fluxes) to obtain highly accurate and stable schemes in several situations. Within the DG framework, the local discontinuous Galerkin (LDG) method can be obtained by extending to handle derivatives of order higher than one. The first LDG method was introduced by Cockburn and Shu in \cite{Shu1998_Siam} for solving the convection-diffusion equation. Their work was motivated by the successful numerical experiments of Bassi and Rebay \cite{Bassi1997_JCP} for compressible Navier-Stokes equations. The LDG methods can be applied in many equations, such as KdV type equations \cite{Yan2002_Siam, Xu2005_PDNP, Xu2006_CMAME, Xu2007_CMAME,Xing2016_CiCP, Zhang2019CiCP},  Camassa-Holm equations \cite{Xu2004_JCM, Zhang2019_Jsc},  Degasperis-Procesi equation \cite{Xu2011_Cicp}, Schr\"{o}dinger equations \cite{Xu2005_JCP, Xia2014CiCP}, and more nonlinear equations or system \cite{Yan2002_JSC, Xu2004_JCM, Xu2010_CiCP,Liu2018_cicp, Liu2019_cicp}.

 There are also many conservative DG schemes that are proposed to ``preserve structure", such as KdV equation \cite{Bona2013_MC, Xing2016_CiCP, Zhang2019CiCP}, Zakharov system \cite{Xia2010JCP}, Schr\"odinger-KdV system \cite{Xia2014CiCP}, short pulse equation \cite{Zhang2019_JCP}, etc. Usually, the structure preserving  schemes can help reduce the shape error of waves along with long time evolution. For example in \cite{Bona2013_MC, Liu2015_NM, Xing2016_CiCP, Zhang2019CiCP}, compared with dissipative schemes, the energy conservative or Hamiltonian conservative numerical schemes for the KdV equation have less shape error or amplitude damping for long time approximations, especially in the low-resolution cases.

In this paper, we adopt the LDG method as a spatial discretization to construct high order accurate numerical schemes for the Fornberg-Whitham type equations. Through the two equivalent forms of this type of equation, we develop dissipative and conservative schemes, respectively.
The corresponding conservative quantities can be proved as dissipative or conservative for the semi-discrete schemes.
For the dissipative schemes, the theoretical results are confirmed as the $(k+1)$-$th$ order of accuracy if $p$ is odd, and $(k+\frac{1}{2})$-$th$ order of accuracy if $p$ is even.  Here and in what follows, $k$ is the polynomial degree of the finite element space. {The proof for the dissipative schemes is motivated by the work on conservation laws \cite{Zhang2004_SIAM, Zhang2010_SIAM}. Since the numerical fluxes are different from the work \cite{Zhang2004_SIAM, Zhang2010_SIAM}, there exist technical obstacles to derive the a priori error estimates due to the lack of control on some jump terms at interfaces. The error estimate results
for the Fornberg-Whitham type equations can not achieve the same optimal order as in
\cite{Zhang2004_SIAM, Zhang2010_SIAM} for the upwind flux. For the conservative schemes, we prove the $k$-$th$ order of accuracy in a different way from \cite{Bona2013_MC}.} Numerically, the dissipative schemes have good behavior for shock solutions, while for a long time approximation, the conservative schemes can reduce the shape error and the decay of amplitude significantly.
The two proposed LDG schemes vary slightly on efficiency which will be explained in the numerical experiments.

The paper is organized as follows. Through the two equivalent forms of the Fornberg-Whitham type equations, we construct two dissipative, conservative DG schemes in Section \ref{DG1} and \ref{DG2}, respectively.  We demonstrate the dissipation and conservation correspondingly. Additionally, some results of the error estimate are stated.    Subsequently, several numerical experiments are presented in Section \ref{experiments} to show the capability of the methods. This paper is concluded in Section \ref{conclusion}. Some more technical proofs of relevant lemmas are listed in the appendix.

\section {The LDG scheme for  equation (\ref{eqn:FW_intro}) }\label{DG1}

\subsection{Notations}\label{notation}
  We denote the mesh $\mathcal{T}_h $ by $I_j = [x_{j-\frac{1}{2}}, x_{j+\frac{1}{2}}] $ for $j = 1,\ldots, N$, where $x_{\frac{1}{2}} = a, x_{N+\frac{1}{2}} = b $ with the cell center denoted by $ x_j
  =   \frac{1}{2}(x_{j-\frac{1}{2}}+x_{j+\frac{1}{2}})$. The cell size is $\Delta x_j = x_{j+\frac{1}{2}}- x_{j-\frac{1}{2}} $ and $ h =\ \max\limits_{1\leq   j\leq N} \ \Delta x_j$.  The finite element space as the solution and test function space consists of piecewise polynomials
 $$V_h^k = \{v:v|_{I_j} \in P^k(I_j); 1\leq j\leq N\},$$                                                                                               where $P^k(I_j)$ denotes the set of polynomials of degree up to $k$ defined on the cell $I_j$. Notably, the functions in $V_h^k$ are allowed to be discontinuous across cell interfaces. The values of $u$ at $x_{j+\frac{1}{2}}$ are denoted by $u_{j+\frac{1}{2}}^-$ and $u_{j+\frac{1}{2}}^+$,
   from    the left cell $I_j$ and the right cell $I_{j+1}$, respectively. Additionally, the jump of $u$ is defined as
   $$\jump{u} = u^+ - u^- ,$$
   the average of    $u$ as
   $$\average{u} = \frac{1}{2}(u^+ + u^-).$$

To simplify expressions, we adopt the round bracket and angle bracket for  the $L^2$ inner product and boundary term on cell $I_j$
  \begin{equation}\label{bracket_def}
    \begin{split}
  (u, v)_{I_j}& = \int_{I_j} uv dy,\\
  <\hat{u}, v>_{I_j}& = \hat{u}_{j+\frac{1}{2}}v_{j+\frac{1}{2}}^- - \hat{u}_{j-\frac{1}{2}}v_{j-\frac{1}{2}}^+
    \end{split}
     \end{equation}
  for the one dimensional case.

\subsection{The LDG scheme}\label{se:ldg1}
We consider the Fornberg-Whitham type equations
\begin{align}\label{eqn:GFW}
u_t + f(u)_x + (1-\partial_x^2)^{-1}u_x = 0,
\end{align}
where $f(u) = \displaystyle\frac{1}{p}u^p$, and $p \geq 2$ is an integer. The periodic boundary condition is adopted here, which is not essential. The LDG methods we propose here or later can be designed for non-periodic boundary condition easily.

First, we split the equation \eqref{eqn:GFW} into a first-order system
\begin{align*}
&u_t + f(u)_x  + v = 0,\\
&v- q_x = u_x,\\
&q = v_x.
\end{align*}

Then the semi-discrete LDG scheme is formulated as: Find numerical solutions $u_h, v_h, q_h \in V_h^k$, such that
\begin{subnumcases}{\label{scheme:DG}}
((u_h)_t, \phi)_{I_j} + <\widehat{f(u_h)}, \phi>_{I_j} - (f(u_h), \phi_x)_{I_j}  +(v_h , \phi)_{I_j} = 0,\label{scheme:DG1} \\
(v_h , \varphi)_{I_j} - <\widehat{q_h}, \varphi>_{I_j} + (q_h, \varphi_x)_{I_j} = <\widehat{u_h}, \varphi>_{I_j} - (u_h, \varphi_x)_{I_j},\label{scheme:DG2}\\
(q_h , \psi)_{I_j} = <\widehat{v_h}, \psi>_{I_j} - (v_h, \psi_x)_{I_j},\label{scheme:DG3}
\end{subnumcases}
for any test function $\phi,\varphi,\psi \in V^k_h$. Here, the ``hat" terms in \eqref{scheme:DG} are the so-called ``numerical fluxes'', which are functions defined on the cell boundary from integration by parts and should be designed based on different guiding principles for different PDEs to ensure the stability and local solvability of the intermediate variables. The main distinctions between dissipative and conservative schemes are the choices of numerical fluxes. We have two LDG methods as follows:

$\mathbf{Scheme \ \mathcal{D}1 :}$
For the dissipative numerical flux of the nonlinear term $f(u)$ here,  we take the Godunov flux to prepare the optimal convergence rate subsequently,
\begin{equation}\label{eqn: diss_flux_non}
\widehat{f(u_h)}\equiv \widehat{f}(u_h^-,u_h^+) =  \begin{cases} \min_{u_h^-\leq u_h \leq u_h^+}\; f(u_h), & u_h^- < u_h^+ \\
    \max_{u_h^+\leq u_h \leq u_h^-}\; f(u_h), & u_h^+ \leq u_h^-\end{cases},
\end{equation}
which is called an upwind numerical flux satisfying the following property,
\begin{equation}\label{eqn:upwind}
\widehat{f(u_h)} =  \begin{cases} f(u_h^-), \ & \text{if} \ f'(u)\geq 0,\  u \in[ \min{(u_h^-, u_h^+)}, \max{(u_h^-, u_h^+)}]  \\
                                  f(u_h^+), \ & \text{if} \ f'(u)< 0, \ u \in[ \min{(u_h^-, u_h^+)}, \max{(u_h^-, u_h^+)}] \end{cases}.
\end{equation}
For the numerical fluxes $\widehat{q_h},\widehat{v_h},\widehat{u_h}$, we select
\begin{equation}\label{eqn: diss_flux}
\widehat{q_h} = q_h^-,\ \widehat{v_h} = v_h^+,\ \widehat{u_h} = u_h^-
\end{equation}
to maintain the energy stability. Numerically, the dissipative scheme \eqref{scheme:DG} with fluxes \eqref{eqn: diss_flux_non}, \eqref{eqn: diss_flux} can achieve $(k+1)$-$th$ order of accuracy.

$\mathbf{Scheme \ \mathcal{C}1 :}$ For the conservative scheme $\mathcal{C}1$, the numerical flux for the nonlinear term $f(u)$ is given by
\begin{equation}\label{fluxC1:non}
\widehat{f(u_h)} \equiv \widehat{f}(u_h^-,u_h^+) =  \begin{cases}\displaystyle \frac{\jump{F(u_h)}}{\jump{u_h}}, \; &\jump{u_h} \neq 0 \\ f(\average{u_h}),\; &\jump{u_h} = 0\end{cases},
\end{equation}
where $ F(u) = \int^u f(\tau)d\tau$, especially for $f(u) = \frac{1}{p}u^p, p \geq 2 $,
$$ \quad \widehat{f(u_h)} = \frac{1}{p(p+1)}\sum\limits_{m=0}^p(u_h^+)^{p-m}(u_h^-)^m $$
 as in \cite{Bona2013_MC}.
Then we choose the central fluxes for $\widehat{q_h},\widehat{v_h},\widehat{u_h}$,
\begin{equation}\label{eqn: con_flux}
\widehat{q_h} = \average{q_h},\ \widehat{v_h} = \average{v_h},\ \widehat{u_h} = \average{u_h}.
\end{equation}
For the conservative scheme \eqref{scheme:DG} with fluxes \eqref{fluxC1:non}, \eqref{eqn: con_flux} on uniform meshes,
we obtain the $k$-$th$ order for odd $k$, and $(k+1)$-$th$ order for even $k$ numerically.

\subsection{Dissipation and conservation}\label{sec:stab}
In this section, we provide proof of energy dissipation or energy conservation for the proposed LDG schemes in Section \ref{se:ldg1}.  The proposition demonstrates the conservative quantities based on which we construct the proposed numerical schemes $\mathcal{D}1$ and $\mathcal{C}1$, including mass $E_0$, and energy $E_2$ in \eqref{conservative quantity}.

Before the proposition, we define some bilinear operators to simplify our expressions.
\begin{definition} We define bilinear operators $\mathcal{L}^{\pm,c}_j$ as
\begin{align}\label{operatorD}
\mathcal{L}^{\pm,c}_j(\omega,\phi) = -(\omega, \phi_x)_{I_j} + <\widehat{\omega},\phi>_{I_j},
\end{align}
where the direction of $\widehat{w}$ determines the operator $\mathcal{L}^+$, $\mathcal{L}^-$ or $\mathcal{L}^c$. Wherein $\mathcal{L}^{\pm}$ denotes the cases with $\omega^{\pm}$ correspondingly, and $\mathcal{L}^{c}$ is for $\widehat{\omega} = \omega^{c}$.
\end{definition}
\begin{definition}
The operators $\mathcal{N}^{c,d}_j$ for the nonlinear term $f(u)$ are defined as
\begin{align}\label{operatorN}
\mathcal{N}^{c,d}_j(\omega,\phi) = -(f(\omega), \phi_x)_{I_j} + <\widehat{f(\omega)},\phi>_{I_j}
\end{align}
The distinction between the dissipative form $\mathcal{N}^{d}_j$ and the conservative form $\mathcal{N}^{c}_j$ lies in the
numerical flux $\widehat{f}$, which are taken in \eqref{eqn: diss_flux_non} or \eqref{fluxC1:non}, respectively.
\end{definition}

\begin{lemma}\label{lemma:properties_DN}
Let $\mathcal{L}^{\pm,c} = \sum\limits_{j=1}^{N}\mathcal{L}^{\pm,c}_j, \mathcal{N}^{c,d} = \sum\limits_{j=1}^{N} \mathcal{N}^{c,d}_j$, there hold the following properties,
\begin{equation}\label{properties_DN}
\begin{split}
&\mathcal{L}^c(\omega,\omega) = 0, \quad \mathcal{L}^+(\omega,\omega) = - \frac{1}{2}\sum\limits_{j=1}^{N}\jump{\omega}^2_{j+\frac{1}{2}};\\
&\mathcal{L}^+(\omega,\phi) + \mathcal{L}^-(\phi,\omega) =0, \quad \mathcal{L}^c(\omega,\phi) + \mathcal{L}^c(\phi,\omega) = 0; \\
& \mathcal{L}^-(\omega,\phi)+\mathcal{L}^-(\phi,\omega) = \sum\limits_{j=1}^{N}\jump{\omega}\jump{\phi}_{j+\frac{1}{2}} ; \\
&\mathcal{N}^d(\omega,\omega) \geq 0,  \quad \mathcal{N}^c(\omega,\omega) = 0
\end{split}
\end{equation}
for $\forall \omega, \phi \in V_h^k$.
\end{lemma}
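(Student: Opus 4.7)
The plan is to reduce everything to a single cell-wise integration-by-parts identity and then handle each of the seven claims as a short algebraic consequence. After summing the cell contributions and using the periodic boundary condition to telescope the interface terms, I would first rewrite
\[
\mathcal{L}(\omega,\phi) \;=\; -(\omega,\phi_x) \;-\; \sum_{j=1}^N \widehat{\omega}_{j+\frac12}\,\jump{\phi}_{j+\frac12},
\qquad
\mathcal{N}(\omega,\phi) \;=\; -(f(\omega),\phi_x) \;-\; \sum_{j=1}^N \widehat{f(\omega)}_{j+\frac12}\,\jump{\phi}_{j+\frac12},
\]
where the sum over cell boundaries comes from index shifting $\widehat{\omega}_{j-\frac12}\phi^+_{j-\frac12}\to \widehat{\omega}_{j+\frac12}\phi^+_{j+\frac12}$. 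This single identity will drive every subsequent calculation.

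For the four $\mathcal{L}$ identities I would set $\phi=\omega$ and use $2(\omega,\omega_x)_{I_j}=((\omega^2)_x,1)_{I_j}$, which after summing telescopes to $-\sum_j \jump{\omega^2}_{j+\frac12}/2 =\sum_j(\{\omega\}-\widehat{\omega})\jump{\omega}_{j+\frac12}$; plugging in $\widehat{\omega}=\{\omega\}$ yields $\mathcal{L}^c(\omega,\omega)=0$, and $\widehat{\omega}=\omega^+$ produces the $-\tfrac12\sum\jump{\omega}^2$ formula. For the bilinear cross-identities I would add $\mathcal{L}(\omega,\phi)+\mathcal{L}(\phi,\omega)$, use $-(\omega,\phi_x)-(\phi,\omega_x)=\sum_j\jump{\omega\phi}_{j+\frac12}$ together with the standard jump–average identity $\jump{\omega\phi}=\{\omega\}\jump{\phi}+\{\phi\}\jump{\omega}$, and then read off the answers depending on the choice of $\widehat{\omega}$ and $\widehat{\phi}$ (central, $+$, or $-$).

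For the nonlinear operator, the conservative case is easy: with $\widehat{f(\omega)}=\jump{F(\omega)}/\jump{\omega}$ we have $\widehat{f(\omega)}\jump{\omega}=\jump{F(\omega)}$, and since $-(f(\omega),\omega_x)=\sum_j\jump{F(\omega)}_{j+\frac12}$ after cellwise integration of $F(\omega)_x$, the two telescoping sums cancel and $\mathcal{N}^c(\omega,\omega)=0$. The main obstacle—and the place where care is needed—is the dissipative case $\mathcal{N}^d(\omega,\omega)\ge 0$. I would write it as
\[
\mathcal{N}^d(\omega,\omega) \;=\; \sum_{j=1}^N \Theta(\omega^-_{j+\frac12},\omega^+_{j+\frac12}),
\qquad
\Theta(a,b) \;:=\; \int_a^b f(s)\,ds \;-\; \widehat{f}(a,b)\,(b-a),
\]
and then prove $\Theta(a,b)\ge 0$ using the Godunov definition \eqref{eqn: diss_flux_non}: when $a<b$, $\widehat{f}=\min_{[a,b]}f$ so $\int_a^b f(s)\,ds\ge (b-a)\widehat{f}$; when $b\le a$, $\widehat{f}=\max_{[b,a]}f$ and the same inequality follows after flipping the integration limits. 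This monotone-flux argument is the only step beyond routine DG bookkeeping, but it is exactly the classical cell-entropy-type estimate and fits on a couple of lines.
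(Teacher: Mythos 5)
Your proposal is correct. Note that the paper itself gives no proof of this lemma: it only remarks that the identities ``can be easily derived by algebraic manipulation'' and defers to the cited works of Bona--Chen--Karakashian--Xing and Xu--Shu, so there is no in-paper argument to compare against; what you wrote supplies exactly the standard derivation those references contain. Every key step checks out: the periodic re-summation $\sum_j\langle\widehat{\omega},\phi\rangle_{I_j}=-\sum_j\widehat{\omega}_{j+\frac12}\jump{\phi}_{j+\frac12}$; the resulting formula $\mathcal{L}(\omega,\omega)=\sum_j\left(\average{\omega}-\widehat{\omega}\right)\jump{\omega}_{j+\frac12}$, which yields $\mathcal{L}^c(\omega,\omega)=0$ and $\mathcal{L}^+(\omega,\omega)=-\frac12\sum_j\jump{\omega}^2$ at once; the product rule $\jump{\omega\phi}=\average{\omega}\jump{\phi}+\average{\phi}\jump{\omega}$ for the three cross identities; the cancellation $\widehat{f(\omega)}\jump{\omega}=\jump{F(\omega)}$ for $\mathcal{N}^c(\omega,\omega)=0$; and the E-flux (cell-entropy) inequality $\int_a^b f(s)\,ds\ge \widehat{f}(a,b)(b-a)$ for the Godunov flux, which gives $\mathcal{N}^d(\omega,\omega)\ge 0$ in both orderings of $a$ and $b$. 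One cosmetic point: in your first displayed chain, the telescoped quantity $-\frac12\sum_j\jump{\omega^2}_{j+\frac12}$ equals $(\omega,\omega_x)_I=-\sum_j\average{\omega}\jump{\omega}_{j+\frac12}$ on its own; the expression $\sum_j(\average{\omega}-\widehat{\omega})\jump{\omega}_{j+\frac12}$ is reached only after also adding the flux contribution $-\sum_j\widehat{\omega}\jump{\omega}_{j+\frac12}$, so the middle equality conflates two different objects even though the conclusions you draw from it are the right ones.
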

The properties in Lemma \ref{lemma:properties_DN} can be easily derived by algebraic manipulation which had already been proved in \cite{Bona2013_MC}, \cite{Xu2006_CMAME}, so we do not give the details here.

\begin{prop}\label{prop3}
For periodic problems, we have
\begin{itemize}
\item {Scheme $\mathcal{D}1 $}
\begin{align}
\label{prop3_d1}  \frac{d}{dt} E_0(u_h) =  \frac{d}{dt}\int_I u_h dx = 0,\quad\quad \frac{d}{dt} E_2(u_h) = \frac{d}{dt}\int_I u_h^2 dx \leq 0.
\end{align}
\item {Scheme $\mathcal{C}1 $}
\begin{align}
\label{prop3_c1}  \frac{d}{dt} E_0(u_h) = \frac{d}{dt} \int_I u_h dx = 0,\quad\quad \frac{d}{dt} E_2(u_h) = \frac{d}{dt}\int_I u_h^2 dx = 0.
\end{align}

\end{itemize}

\end{prop}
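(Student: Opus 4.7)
The plan is to derive both identities directly from the three equations of \eqref{scheme:DG} by judicious choices of test functions and then to invoke the algebraic properties of $\mathcal{L}^{\pm,c}$ and $\mathcal{N}^{c,d}$ collected in Lemma \ref{lemma:properties_DN}. Under periodic boundary conditions every single-valued flux sum telescopes to zero, which is the mechanism behind both conservation claims.

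For the mass identity, I would take $\phi\equiv 1$ in \eqref{scheme:DG1} and sum over $j$. Since $\phi_x=0$, the flux sum $\sum_j(\widehat{f(u_h)}_{j+1/2}-\widehat{f(u_h)}_{j-1/2})$ telescopes away, leaving $\frac{d}{dt}\int_I u_h\,dx + \int_I v_h\,dx=0$. To dispose of the second term I would take $\varphi\equiv 1$ in \eqref{scheme:DG2}; since $\widehat{q_h}$ and $\widehat{u_h}$ are single valued in either scheme, the corresponding sums telescope and yield $\int_I v_h\,dx=0$. This argument is completely insensitive to the flux choice, so it covers both $\mathcal{D}1$ and $\mathcal{C}1$ simultaneously.

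For the energy identity, the master relation is obtained by setting $\phi=u_h$ in \eqref{scheme:DG1} and summing:
\begin{equation}\label{energyplan}
\tfrac12\frac{d}{dt}\!\int_I u_h^2\,dx + \mathcal{N}(u_h,u_h) + \int_I v_h u_h\,dx = 0.
\end{equation}
The task therefore reduces to the sign of the coupling term $\int_I v_h u_h\,dx$. Rewriting in operator form, \eqref{scheme:DG2} reads $(v_h,\varphi)=\mathcal{L}(u_h,\varphi)+\mathcal{L}(q_h,\varphi)$ and \eqref{scheme:DG3} reads $(q_h,\psi)=\mathcal{L}'(v_h,\psi)$, with $(\mathcal{L},\mathcal{L}')=(\mathcal{L}^c,\mathcal{L}^c)$ for $\mathcal{C}1$ and $(\mathcal{L}^-,\mathcal{L}^+)$ for $\mathcal{D}1$. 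For $\mathcal{C}1$ I would plug in $\varphi=u_h$, $\varphi=q_h$, $\psi=v_h$ and use $\mathcal{L}^c(\omega,\omega)=0$ together with antisymmetry to chain $(v_h,u_h)=\mathcal{L}^c(q_h,u_h)$, $(v_h,q_h)=\mathcal{L}^c(u_h,q_h)=-\mathcal{L}^c(q_h,u_h)$, and $(q_h,v_h)=0$, which forces $\int_I v_h u_h\,dx=0$. Combined with $\mathcal{N}^c(u_h,u_h)=0$ this gives conservation.

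For $\mathcal{D}1$ the same three test choices, combined with $\mathcal{L}^-(\omega,\omega)=\tfrac12\sum_j\jump{\omega}^2$ and $\mathcal{L}^+(\omega,\omega)=-\tfrac12\sum_j\jump{\omega}^2$, produce $(v_h,u_h)=\tfrac12\sum_j\jump{u_h}^2+\mathcal{L}^-(q_h,u_h)$ and, separately, $\mathcal{L}^-(u_h,q_h)=-\tfrac12\sum_j(\jump{q_h}^2+\jump{v_h}^2)$. The third identity of Lemma \ref{lemma:properties_DN}, $\mathcal{L}^-(u_h,q_h)+\mathcal{L}^-(q_h,u_h)=\sum_j\jump{u_h}\jump{q_h}$, then lets me eliminate $\mathcal{L}^-(q_h,u_h)$ and recombine the pieces into a completed square
\[
\int_I v_h u_h\,dx = \tfrac12\sum_j(\jump{u_h}+\jump{q_h})^2+\tfrac12\sum_j\jump{v_h}^2\ge 0,
\]
so that together with $\mathcal{N}^d(u_h,u_h)\ge 0$, \eqref{energyplan} gives dissipation. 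The main obstacle is exactly this bookkeeping: the cross term $\sum_j\jump{u_h}\jump{q_h}$ is a priori of indefinite sign, and only after carefully matching the contributions from the three test choices does it assemble into a perfect square. Without the precise flux pairing $\widehat{u_h}=u_h^-$, $\widehat{q_h}=q_h^-$, $\widehat{v_h}=v_h^+$, this cancellation would fail and only stability with an indefinite surface contribution would be available.
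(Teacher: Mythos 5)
Your proposal is correct and follows essentially the same route as the paper: mass conservation via $\phi=\varphi=1$ and telescoping of single-valued fluxes, and the energy law via the test choices $u_h,\,q_h,\,v_h$ in \eqref{scheme:DG2}--\eqref{scheme:DG3} combined with the identities of Lemma \ref{lemma:properties_DN}. The only difference is presentational — you isolate and evaluate the coupling term $\int_I v_h u_h\,dx$ explicitly as the completed square $\tfrac12\sum_j\bigl((\jump{u_h}+\jump{q_h})^2+\jump{v_h}^2\bigr)$, whereas the paper sums all four tested equations at once and lets the cross terms cancel before invoking the same lemma.
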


\begin{proof}
First, we can obtain the mass $E_0$ conservation after summing up equations \eqref{scheme:DG1},
\eqref{scheme:DG2} with test functions $\phi = 1, \varphi =1$.

Next, we prove the $L^2$ stability by taking the test functions as
\begin{equation*}
\phi = u_h, \ \varphi = -u_h, \ \varphi = -q_h, \ \psi = v_h
\end{equation*}
in scheme \eqref{scheme:DG}. After summing up corresponding equations over all intervals,
\begin{itemize}
\item For the dissipative scheme $\mathcal{D}1$
\end{itemize}
\begin{align*}
&((u_h)_t, u_h)_{I} + \mathcal{N}^{d}(u_h,u_h) + \mathcal{L}^-(q_h,u_h) + \mathcal{L}^-(u_h,q_h)+ \mathcal{L}^-(u_h,u_h) + \mathcal{L}^-(q_h,q_h) - \mathcal{L}^+(v_h,v_h)  \\
& =((u_h)_t, u_h)_{I} + \mathcal{N}^{d}(u_h,u_h) + \frac{1}{2}\sum\limits_{j=1}^{N}\big((\jump{u_h}+\jump{q_h})^2_{j+\frac{1}{2}} + \jump{v_h}^2_{j+\frac{1}{2}}\big) = 0.
\end{align*}
\begin{itemize}
\item For the conservative scheme $\mathcal{C}1$
\end{itemize}
\begin{align*}
&((u_h)_t, u_h)_{I} + \mathcal{N}^{c}(u_h,u_h) + \mathcal{L}^c(q_h,u_h) + \mathcal{L}^c(u_h,q_h)
+ \mathcal{L}^c(u_h,u_h) + \mathcal{L}^c(q_h,q_h) - \mathcal{L}^c(v_h,v_h)\\& =((u_h)_t, u_h)_{I}  = 0.
\end{align*}
 Here we have used the results in Lemma \ref{lemma:properties_DN}. Then we obtain the final dissipation or conservation in \eqref{prop3_d1} and \eqref{prop3_c1}.
%
%
%
%
\end{proof}


\subsection{Error estimates}\label{sec:error_estimate}

In this section, we provide error estimates of the LDG schemes in Section \ref{se:ldg1} for the sufficiently smooth periodic solution of the Fornberg-Whitham type equations.
\subsubsection{Notations, projections, and auxiliary results} \label{thm:prepare}
First, we make some conventions for different constants. For any time $t$ in $[0,T]$, we assume that the exact solution and its spatial derivatives are all bounded. We use the notation $C$ to denote a positive constant which is independent of $h$, but depends on $\abs{f'}$ and the exact solution of the problem considered in this paper. Additionally, the notation $C_*$  is used to denote the constants which are relevant to the maximum of $\abs{f''}$. 
 Under different circumstances, these constants will have different values.

Next, we will introduce some projection properties to be used later. The standard $L^2$ projection of a function $\zeta$ with $k+1$ continuous derivatives into space $V_h^k$, is denoted by $\mathcal{P}$, i.e., for each $I_j$
\begin{equation*}
\begin{split}
&(\mathcal{P}\zeta - \zeta,\phi)_{I_j} =0, \ \forall \phi \in P^{k}(I_j),
\end{split}
\end{equation*}
and the Gauss Radau projections $\mathcal{P}^{\pm}$ into $V_h^k$  satisfy, for each $I_j$,
\begin{align*}
(\mathcal{P}^{+}\zeta - \zeta,\phi)_{I_j} =0, \ \forall \phi \in P^{k-1}(I_j), \ \text{and} \ \mathcal{P}^{+}\zeta(x_{j-\frac{1}{2}}^+) = \zeta({x_{j-\frac{1}{2}}}),\\
(\mathcal{P}^{-}\zeta - \zeta,\phi)_{I_j} =0, \ \forall \phi \in P^{k-1}(I_j),\ \text{and} \
\mathcal{P}^{-}\zeta(x_{j+\frac{1}{2}}^-) = \zeta({x_{j+\frac{1}{2}}}).
\end{align*}
For the projections mentioned above, it is easy to show \cite{1975_Ciarlet_NH} that
\begin{equation}\label{projection error}
\norm{ \zeta^e}_{L^2(I)} +  h^{\frac{1}{2}} \norm{ \zeta^e}_{\infty} + h^{\frac{1}{2}}\norm{ \zeta^e}_{L^2({\partial I})}  \leq Ch^{k+1}
\end{equation}
where $\zeta^e =\zeta -\mathcal{P}\zeta$ or $ \zeta -\mathcal{P}^{\pm}\zeta$, and the positive constant $C$ only depends on $\zeta$.

Then some inverse inequalities of the finite element space $V_h^k$ will be applied in the subsequent proofs.
\begin{lemma}\cite{1975_Ciarlet_NH}
 For $\forall \omega \in V_h^k$, there exists a positive constant $C$ which is independent on $\omega, h$, such that
\begin{equation}\label{eqn:inverse inequality}
(i) \norm{\omega_x}_{L^2({I})} \leq C h^{-1}\norm{\omega}_{L^2(I)},\
 (ii)\norm{\omega}_{L^2(\partial{I})} \leq C h^{-\frac{1}{2}}\norm{\omega}_{L^2(I)},\
  (iii)\norm{\omega}_{\infty} \leq C h^{-\frac{1}{2}}\norm{\omega}_{L^2(I)},
\end{equation}
where
$$\norm{\omega}_{L^2(\partial{I})}  = \sqrt{\sum\limits_{j=1}^{N}(\omega_{j+\frac{1}{2}}^-)^2 +(\omega_{j-\frac{1}{2}}^+)^2}.$$
\end{lemma}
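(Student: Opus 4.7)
The plan is to prove all three inequalities by the standard reference-element scaling argument, exploiting the fact that $P^k$ on a fixed interval is finite dimensional, so that all norms on it are equivalent with constants depending only on $k$.

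First I would fix a cell $I_j=[x_{j-1/2},x_{j+1/2}]$ of length $\Delta x_j$ and introduce the affine bijection $x = x_j + (\Delta x_j/2)\hat x$ from the reference interval $\hat I = [-1,1]$ onto $I_j$. For $\omega \in V_h^k$ set $\hat\omega(\hat x):=\omega(x(\hat x))$, which lies in $P^k(\hat I)$. A direct change of variables produces the scalings
\[
\norm{\omega}_{L^2(I_j)}^2 = \tfrac{\Delta x_j}{2}\,\norm{\hat\omega}_{L^2(\hat I)}^2, \qquad \norm{\omega_x}_{L^2(I_j)}^2 = \tfrac{2}{\Delta x_j}\,\norm{\hat\omega_{\hat x}}_{L^2(\hat I)}^2,
\]
while the boundary values $\omega^{\pm}(x_{j\pm 1/2})$ coincide with $\hat\omega(\pm 1)$ and the $L^\infty$ norm is scale invariant.

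Next, on the finite-dimensional space $P^k(\hat I)$ I would invoke the equivalence of norms to produce constants $c_1,c_2,c_3$ depending only on $k$ such that $\norm{\hat\omega_{\hat x}}_{L^2(\hat I)}\le c_1\norm{\hat\omega}_{L^2(\hat I)}$, $|\hat\omega(\pm 1)|^2\le c_2\norm{\hat\omega}_{L^2(\hat I)}^2$, and $\norm{\hat\omega}_{L^\infty(\hat I)}\le c_3\norm{\hat\omega}_{L^2(\hat I)}$. Combining these with the scaling identities above yields the cellwise bounds $\norm{\omega_x}_{L^2(I_j)}\le C\Delta x_j^{-1}\norm{\omega}_{L^2(I_j)}$, $(\omega^-_{j+1/2})^2+(\omega^+_{j-1/2})^2\le C\Delta x_j^{-1}\norm{\omega}_{L^2(I_j)}^2$, and $\norm{\omega}_{L^\infty(I_j)}\le C\Delta x_j^{-1/2}\norm{\omega}_{L^2(I_j)}$.

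Finally, I would sum the first two cellwise estimates over $j$, use $\Delta x_j^{-1}\le Ch^{-1}$ (the implicit quasi-uniformity of the mesh), and take square roots to recover (i) and (ii); for (iii) I would simply take the maximum over $j$. There is essentially no genuine obstacle in this argument---it is a staple of finite-element analysis---and the only detail requiring attention is bookkeeping the Jacobian correctly: a single factor of $\Delta x_j$ for each $L^2$ volume rescaling, and no factor for trace evaluations or the $L^\infty$ norm, which accounts for the exponents $-1$, $-1/2$, $-1/2$ in (i), (ii), (iii) respectively.
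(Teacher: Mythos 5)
Your proposal is correct, and it is the standard reference-element scaling argument: the paper itself gives no proof of this lemma (it is quoted directly from the cited finite-element text \cite{1975_Ciarlet_NH}), so there is nothing in the paper to compare against beyond noting that your argument is the canonical one. Your cellwise computations and Jacobian bookkeeping are right, and you correctly identify the one hypothesis that must be supplied, namely mesh regularity (quasi-uniformity) so that $\Delta x_j^{-1} \leq C h^{-1}$, which the paper assumes when it restricts to ``regular partitions'' of $I$.
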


\subsubsection{The main error estimate results}\label{sec:Error_1}
\begin{theorem}\label{thm1}
It is assumed that the Fornberg-Whitham type equations \eqref{eqn:GFW}  with periodic boundary condition has a sufficiently smooth exact solution $u$. The numerical solution $u_h$ satisfies the semi-discrete LDG scheme \eqref{scheme:DG}. For regular partitions of $I = (a, b)$, and the finite element space $V^k_h$, there hold the following error estimates for small enough $h$
\begin{itemize}
\item {Scheme $\mathcal{D}1 $:}
\begin{align}\label{eqn:thm1d}
   \begin{cases}&\norm{ u - u_h}_{L^2(I)} \leq Ch^{k+1},  \quad\text{if $p$ is odd},\\
                                                      &\norm{ u - u_h}_{L^2(I)} \leq Ch^{k+\frac{1}{2}}, \quad \text{if $p$ is even}.
                                                 \end{cases}, \ k \geq 1
\end{align}
\item {Scheme $\mathcal{C}1 $:}
\begin{align}
   \norm{ u - u_h}_{L^2(I)} \leq Ch^{k},   \ k \geq 2  \label{eqn:thm1c}
\end{align}

\end{itemize}
where the integer $p$ is in the nonlinear term $f(u) = \frac{1}{p}u^p$. The constant $C$ depends on the final time $T$, $k$, $\norm{u}_{k+2}$ and the bounds of derivatives up to second order of the
nonlinear term $f(u)$. Here, $\norm{u}_{k+2}$ is the maximum of the standard Sobolev $k+2$ norm over $[0, T]$.

\end{theorem}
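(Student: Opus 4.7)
The plan is to derive the error estimates via a standard LDG energy argument, with the main challenge being the control of the nonlinear Godunov term and the difference in convergence rate between the two flux choices. For both schemes I set $e_\omega = \omega - \omega_h = \eta_\omega - \xi_\omega$ with $\eta_\omega = \omega - \Pi\omega$ and $\xi_\omega = \omega_h - \Pi\omega \in V_h^k$ for $\omega\in\{u,v,q\}$. For Scheme $\mathcal{D}1$ I choose $\Pi=\mathcal{P}^-$ on $u$ and $q$ (matching $\widehat{u_h}=u_h^-$, $\widehat{q_h}=q_h^-$) and $\Pi=\mathcal{P}^+$ on $v$ (matching $\widehat{v_h}=v_h^+$), so the $\eta_\omega$ boundary contributions vanish in the operators $\mathcal{L}^{\pm}$; for Scheme $\mathcal{C}1$ I use $\Pi=\mathcal{P}$ on all three variables since central fluxes favor no side. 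Subtracting the scheme from the PDE system gives Galerkin orthogonality. Summing with test functions $\phi=\xi_u$, $\varphi=-\xi_u-\xi_q$, $\psi=\xi_v$ and invoking Lemma \ref{lemma:properties_DN} produces the energy identity
\[
\frac{1}{2}\frac{d}{dt}\norm{\xi_u}_{L^2(I)}^2 + \mathcal{J} + \bigl[\mathcal{N}(u,\xi_u)-\mathcal{N}(u_h,\xi_u)\bigr] = \mathcal{R},
\]
where $\mathcal{J}=\tfrac12\sum_j\bigl(\jump{\xi_u+\xi_q}^2+\jump{\xi_v}^2\bigr)_{j+\frac12}\ge 0$ for $\mathcal{D}1$, $\mathcal{J}=0$ for $\mathcal{C}1$, and $\mathcal{R}$ collects all $\eta_\omega$-based projection residuals.

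To close, the auxiliary errors $\xi_v$ and $\xi_q$ must be controlled by $\xi_u$. Testing the subtracted forms of \eqref{scheme:DG2}--\eqref{scheme:DG3} against $\xi_v$ and $\xi_q$ and invoking the elliptic character of $v=(1-\partial_x^2)^{-1}u_x$ gives $\norm{\xi_v}_{L^2(I)}+\norm{\xi_q}_{L^2(I)}\le C\norm{\xi_u}_{L^2(I)}+Ch^{k+1}$ for Scheme $\mathcal{D}1$, and the analogous bound with $Ch^{k}$ in place of $Ch^{k+1}$ for Scheme $\mathcal{C}1$ due to central-flux boundary residuals of the form $\sum_j\jump{\eta_\omega}\average{\xi_\cdot}$, estimated via the trace inverse inequality \eqref{eqn:inverse inequality} by $Ch^{k+\frac12}\cdot h^{-\frac12}\norm{\xi_\cdot}_{L^2(I)}$. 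Consequently $\mathcal{R}$ is bounded by $Ch^{k+1}\norm{\xi_u}_{L^2(I)}$ in the dissipative case and by $Ch^{k}\norm{\xi_u}_{L^2(I)}$ in the conservative case.

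The principal obstacle is the nonlinear term. Writing $f(u)-f(u_h)=f'(u)(\eta_u-\xi_u)-\tfrac12 f''(\tilde u)(\eta_u-\xi_u)^2$, the linear-in-error contribution is handled via the chosen projection and integration by parts, yielding $Ch^{k+1}\norm{\xi_u}_{L^2(I)}$ for $\mathcal{D}1$ and $Ch^{k}\norm{\xi_u}_{L^2(I)}$ for $\mathcal{C}1$, while the quadratic remainder produces $C_*\norm{\xi_u}_\infty\bigl(\norm{\xi_u}_{L^2(I)}^2+h^{2k+2}\bigr)$ together with jump contributions at cell interfaces. I would close the nonlinear loop by the standard a priori induction: assume $\norm{\xi_u}_\infty\le 1$ and propagate this bound using the inverse inequality $\norm{\xi_u}_\infty\le Ch^{-1/2}\norm{\xi_u}_{L^2(I)}$. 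For Scheme $\mathcal{D}1$ the Godunov flux and the sign structure of $f'(u)=u^{p-1}$ control the jump contribution as in \cite{Zhang2004_SIAM, Zhang2010_SIAM}: for odd $p$ the monotonicity of $f'$ allows $\mathcal{J}$ to absorb the quadratic jump in full, giving $(k+1)$-th order, while for even $p$ the sign of $f'$ may reverse across an interface and only $(k+\tfrac12)$-th order survives. For Scheme $\mathcal{C}1$ no jump dissipation is available, so the half-order loss already present in $\mathcal{R}$ propagates directly to give $k$-th order. A Gronwall inequality applied to the resulting differential inequality then yields \eqref{eqn:thm1d} and \eqref{eqn:thm1c} with constants depending on $T$, $k$, $\norm{u}_{k+2}$, and bounds on $|f'|$, $|f''|$.
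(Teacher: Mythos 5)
Your proposal is correct and follows essentially the same route as the paper: the same Gauss--Radau/$L^2$ projection choices, the same energy argument with a Taylor expansion of the Godunov flux and the sign analysis of $f'(u)=u^{p-1}$ explaining the odd/even-$p$ dichotomy, and the same $h^{k+\frac12}\cdot h^{-\frac12}$ trace-term mechanism producing the (full-order, not half-order as you phrase it) loss to $h^{k}$ for the conservative scheme. The only deviations are mechanical: you control $\xi^v,\xi^q$ through a separate discrete elliptic stability bound where the paper keeps $\norm{\xi^v}^2_{L^2(I)}+\norm{\xi^q}^2_{L^2(I)}$ on the left-hand side via its combined test functions (Lemmas \ref{lemma:another_energy}--\ref{lemma:b}) and absorbs only the cross term $(\xi^u,\xi^q)_I$ by Young's inequality, and you close the cubic term $h^{-\frac32}\norm{\xi^u}^3_{L^2(I)}$ by an a priori induction hypothesis where the paper invokes the nonlinear Gronwall-type Lemma \ref{lemma}.
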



\subsubsection{The error equation}\label{error_eqution1}
Since the exact solution also satisfies the numerical scheme \eqref{scheme:DG}, doing subtraction can bring us the error equations.
Due to the different choices of test functions in the error equations, we define the bilinear form $\mathcal{B}_j$ as
\begin{align*}
&\mathcal{B}_j(u-u_h, v-v_h, q-q_h ; \phi, \bm{\varphi},\bm{\psi}) = ((u - u_h)_t,\phi)_{I_j}+ (v - v_h,\phi)_{I_j} \label{eqn:B_bilinear}\\
&+(q - q_h,\bm{\psi})_{I_j}- \mathcal{L}^+_j(v-v_h,\bm{\psi})+
  (v - v_h,\bm{\varphi})_{I_j} - \mathcal{L}^-_j(q-q_h,\bm{\varphi}) - \mathcal{L}^-_j(u-u_h,\bm{\varphi})\notag
\end{align*}
and the form  for the nonlinear term $f(u)$, which is linear with respect to its second argument, is
\begin{align*}
\mathcal{H}_j(f;u,u_h,\phi) = (f(u) - f(u_h),\phi_x)_{I_j} - <f(u) - \widehat{f(u_h)},\phi>_{I_j}.
\end{align*}
Here, the notations $\bm{\varphi}=(\varphi_1, \varphi_2, \varphi_3)$, $\bm{\psi}=(\psi_1, \psi_2, \psi_3)$ are vectors
consisting of test functions in the finite element space $V_h^k$.  We also define
\begin{align*}
(u,\bm{\varphi})_{I_j} = \sum\limits_{i=1}^3 (u,\varphi_i)_{I_j},\quad \mathcal{L}^{\pm,c}_j(u,\bm{\varphi}) = \sum\limits_{i=1}^3 \mathcal{L}^{\pm,c}_j(u,\varphi_i)
\end{align*}
for writing convenience. After applying summation over all cells $I_j$, the error equation is expressed by
{
\begin{align}
\sum\limits_{j=1}^{N}  \mathcal{B}_j(u-u_h, v-v_h, q-q_h; \phi,\bm{\varphi},\bm{\psi}) = \sum\limits_{j=1}^{N}\mathcal{H}_j(f;u,u_h,\phi).
\end{align}
}

Here, for the dissipative scheme $\mathcal{D}1$, we define
\begin{equation}\label{def:test_function_1}
\begin{split}
&\xi^u = \mathcal{P}^-u-u_h, \quad \eta^u = \mathcal{P}^-u - u, \\
&\xi^v = \mathcal{P}^+v-v_h,\quad \eta^v = \mathcal{P}^+v - v,\\
&\xi^q = \mathcal{P}^-q-q_h,\quad \eta^q = \mathcal{P}^-q - q.
\end{split}
\end{equation}
Without causing misunderstanding, the notations in \eqref{def:test_function_1} are still used to represent the analogs for the conservative scheme. For conservative scheme $\mathcal{C}1$,  we replace the Gauss-Radau projections by the
standard $L^2$ projections
\begin{equation*}
\begin{split}
&\xi^u = \mathcal{P}u-u_h, \quad \eta^u = \mathcal{P}u - u, \\
&\xi^v = \mathcal{P}v-v_h,\quad \eta^v = \mathcal{P}v - v,\\
&\xi^q = \mathcal{P}q-q_h,\quad \eta^q = \mathcal{P}q - q.
\end{split}
\end{equation*}

 Taking test functions
\begin{equation*}
\begin{split}
&\phi = \xi^u, \ \bm{\varphi} = \bm{\xi^1}\triangleq (-\xi^u, -\xi^q, \xi^v),\ \bm{\psi} = \bm{\xi^2}\triangleq (\xi^u, \xi^q, \xi^v),
\end{split}
\end{equation*}
we have the energy equality as
\begin{align*}
\sum\limits_{j=1}^{N} \mathcal{B}_j(\xi^u- \eta^u, \xi^v- \eta^v, \xi^q- \eta^q ; \xi^u, \bm{\xi^1},\bm{\xi^2}) = \sum\limits_{j=1}^{N}\mathcal{H}_j(f;u,u_h,\xi^u).
\end{align*}

\subsubsection{The proof of the main results in Theorem \ref{thm1}}\label{main_results1}

Next, we analyze $\mathcal{B}_j$ and $\mathcal{H}_j$, respectively. Some primary estimate results will be provided in the following lemmas.
 \begin{itemize}
\item {\bf Estimates for the linear terms}
\end{itemize}
\begin{lemma}\label{lemma:another_energy}
The following energy equality holds,
\begin{equation}\label{eqn:another_energy}
\norm{v_h}^2_{L^2(I)} + \norm{q_h}^2_{L^2(I)} + (q_h,u_h)_I = 0.
\end{equation}
\end{lemma}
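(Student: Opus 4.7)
The plan is to mirror the continuous identity. On the PDE side, using $v = u_x + q_x$ from the second split equation and $q = v_x$ from the third, together with periodicity and integration by parts, one sees $\int v^2 = \int v(u_x+q_x) = -\int v_x u - \int v_x q = -(q,u) - \|q\|^2$, giving the claim. The discrete proof will imitate this chain exactly, replacing integration by parts by the bilinear operators $\mathcal{L}^{\pm,c}$ from \eqref{operatorD} and invoking the algebraic identities in Lemma \ref{lemma:properties_DN}.

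Concretely, I would sum the scheme's second equation \eqref{scheme:DG2} over $j$ with the test function $\varphi = v_h$, and rewrite the boundary and volume terms using $\mathcal{L}$, obtaining
\[
\|v_h\|^2_{L^2(I)} - \mathcal{L}(q_h,v_h) - \mathcal{L}(u_h,v_h) = 0,
\]
where $\mathcal{L}$ is $\mathcal{L}^-$ for scheme $\mathcal{D}1$ and $\mathcal{L}^c$ for scheme $\mathcal{C}1$. Next I would sum \eqref{scheme:DG3} once with $\psi = q_h$ and once with $\psi = u_h$; these read
\[
\|q_h\|^2_{L^2(I)} = \mathcal{L}'(v_h, q_h), \qquad (q_h,u_h)_I = \mathcal{L}'(v_h, u_h),
\]
with $\mathcal{L}'$ equal to $\mathcal{L}^+$ for $\mathcal{D}1$ and $\mathcal{L}^c$ for $\mathcal{C}1$.

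The final step is a two-line algebra: the identity $\mathcal{L}^+(\omega,\phi) + \mathcal{L}^-(\phi,\omega) = 0$ (respectively $\mathcal{L}^c(\omega,\phi) + \mathcal{L}^c(\phi,\omega) = 0$) from Lemma \ref{lemma:properties_DN} converts the two $\mathcal{L}'$ relations into $\mathcal{L}(q_h, v_h) = -\|q_h\|^2$ and $\mathcal{L}(u_h, v_h) = -(q_h, u_h)_I$. Substituting into the first displayed equation yields $\|v_h\|^2_{L^2(I)} + \|q_h\|^2_{L^2(I)} + (q_h,u_h)_I = 0$, as desired. There is no genuine obstacle here: the delicate point is simply to make the flux labels in $\mathcal{L}$ versus $\mathcal{L}'$ match the pairings $\widehat{q_h}, \widehat{v_h}, \widehat{u_h}$ prescribed in \eqref{eqn: diss_flux} or \eqref{eqn: con_flux}, so that the antisymmetry in Lemma \ref{lemma:properties_DN} applies cleanly to both schemes simultaneously.
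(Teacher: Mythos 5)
Your proposal is correct and follows essentially the same route as the paper's own proof: test \eqref{scheme:DG2} with $\varphi=v_h$ and \eqref{scheme:DG3} with $\psi=q_h$ and $\psi=u_h$, sum over cells, and cancel the boundary/volume pairs via the antisymmetry identities of Lemma \ref{lemma:properties_DN}. The only difference is cosmetic (you substitute the two $\mathcal{L}'$ relations rather than adding all three equations at once), and your flux-label bookkeeping ($\mathcal{L}^-$ versus $\mathcal{L}^+$ for $\mathcal{D}1$, $\mathcal{L}^c$ throughout for $\mathcal{C}1$) matches the paper's.
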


\begin{proof}The proof is provided in Appendix \ref{proofanother_energy}.
\end{proof}

\begin{lemma}\label{lemma:b}
For the bilinear form $\mathcal{B}_j$, the following equations hold by projection properties
\begin{itemize}
\item  Scheme $\mathcal{D}1 $
\begin{align}
&\sum\limits_{j=1}^{N} \mathcal{B}_j(\xi^u- \eta^u, \xi^v- \eta^v, \xi^q- \eta^q ; \xi^u, \bm{\xi^1}, \bm{\xi^2}) \notag\\
&=  (\xi_t^u,\xi^u)_I  + \norm{\xi^v}^2_{L^2(I)}+ \norm{\xi^q}^2_{L^2(I)}   + \sum\limits_{j=1}^{N} \frac{1}{2}((\jump{\xi^u}+ \jump{\xi^q})^2_{j+\frac{1}{2}} + \jump{\xi^v}^2_{j+\frac{1}{2}})+(\xi^u,\xi^q)_I \notag\\
& \quad \ - ( \eta^q + \eta^u_t,\xi^u)_I  -(\eta^q-\eta^v,\xi^q)_I -(\eta^q+\eta^v, \xi^v)_I ; \label{eqn:B_estimate}
\end{align}
\item  Scheme $\mathcal{C}1 $
\begin{align}
&\sum\limits_{j=1}^{N} \mathcal{B}_j(\xi^u- \eta^u, \xi^v- \eta^v, \xi^q- \eta^q ; \xi^u, \bm{\xi^1}, \bm{\xi^2})= ({\xi}_t^u,{\xi}^u)_I  + \norm{{\xi}^v}^2_{L^2(I)} + \norm{{\xi}^q}^2_{L^2(I)}  \notag \\
&  + ({\xi}^u,{\xi}^q)_{I} -\sum\limits_{j=1}^{N}( <\widehat{{\eta}^q}+\widehat{{\eta}^u} -\widehat{{\eta}^v}, {\xi}^u + {\xi}^q >_{I_j} -  <\widehat{{\eta}^q}+\widehat{{\eta}^u} +\widehat{{\eta}^v}, {\xi}^v>_{I_j}) . \label{eqn:B_estimate2}
\end{align}
\end{itemize}
\end{lemma}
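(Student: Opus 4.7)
The plan is to expand the left-hand side of each identity in Lemma \ref{lemma:b} by substituting the error decomposition $u-u_h = \xi^u - \eta^u$, $v-v_h = \xi^v - \eta^v$, $q-q_h = \xi^q - \eta^q$, distributing over the three components of $\bm{\varphi}$ and $\bm{\psi}$, and then using (i) the cancellations built into the choice of test functions, (ii) the projection orthogonalities, and (iii) the algebraic identities of Lemma \ref{lemma:properties_DN}.

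First I would carry out the purely mechanical expansion. Writing $\bm{\varphi}=(-\xi^u,-\xi^q,\xi^v)$ and $\bm{\psi}=(\xi^u,\xi^q,\xi^v)$ and summing over $j$, the term $(v-v_h,\phi)_{I_j}$ from the time equation is exactly canceled by the first component of $(v-v_h,\bm{\varphi})_{I_j}$. After this cancellation the remaining volume inner products can be split into a $\xi$-part and an $\eta$-part. The time term gives $(\xi^u_t,\xi^u)_I-(\eta^u_t,\xi^u)_I$; the cross terms $(\xi^q,\xi^v)_I$ and $-(\xi^v,\xi^q)_I$ cancel; and we are left with the diagonal pieces $\|\xi^v\|^2+\|\xi^q\|^2$ together with the off-diagonal $(\xi^u,\xi^q)_I$ that appears in the statement.

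Next I would treat the $\mathcal{L}$-terms separately for the two schemes. For Scheme $\mathcal{D}1$, the Gauss-Radau projection properties imply $\mathcal{L}^-(\eta^u,\cdot)=\mathcal{L}^-(\eta^q,\cdot)=\mathcal{L}^+(\eta^v,\cdot)=0$ (the trace vanishes at the relevant endpoint and the interior term vanishes against $\varphi_x\in P^{k-1}$), so every $\mathcal{L}^\pm$ action on $u-u_h,v-v_h,q-q_h$ collapses onto the $\xi$'s. Then Lemma \ref{lemma:properties_DN} converts the diagonal terms $\mathcal{L}^-(\xi^u,\xi^u),\mathcal{L}^-(\xi^q,\xi^q),-\mathcal{L}^+(\xi^v,\xi^v)$ into the jump-squared contributions, the adjoint identity $\mathcal{L}^+(\omega,\phi)+\mathcal{L}^-(\phi,\omega)=0$ eliminates the mixed $(\xi^v,\xi^u)$ and $(\xi^v,\xi^q)$ pairs, and the identity $\mathcal{L}^-(\xi^u,\xi^q)+\mathcal{L}^-(\xi^q,\xi^u)=\sum\jump{\xi^u}\jump{\xi^q}$ combines with the two $\frac12\sum\jump{\cdot}^2$ pieces to form $\frac12\sum(\jump{\xi^u}+\jump{\xi^q})^2$, yielding exactly \eqref{eqn:B_estimate}.

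For Scheme $\mathcal{C}1$ the bilinear form uses $\mathcal{L}^c$ in place of $\mathcal{L}^\pm$, and the arguments are similar but simpler. The standard $L^2$ projection makes every interior pairing $(\eta^\bullet,\cdot)_I$ vanish against test functions in $V_h^k$, so the time term cleans up to $(\xi^u_t,\xi^u)_I$ and all $\eta$ contributions survive only through the boundary parts $\mathcal{L}^c_j(\eta^\bullet,\xi^\bullet)=\langle\widehat{\eta^\bullet},\xi^\bullet\rangle_{I_j}$. The antisymmetry $\mathcal{L}^c(\omega,\phi)+\mathcal{L}^c(\phi,\omega)=0$ and the vanishing of $\mathcal{L}^c(\omega,\omega)$ cause every $\xi$-only $\mathcal{L}^c$ contribution to cancel in pairs, leaving only the bookkeeping of the $\widehat{\eta^u},\widehat{\eta^q},\widehat{\eta^v}$ boundary terms; grouping them by the test functions $\xi^u+\xi^q$ versus $\xi^v$ (arising from the $\pm$ signs in $\bm\varphi$ vs.\ $\bm\psi$) produces the compact form in \eqref{eqn:B_estimate2}.

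The proof is conceptually routine once the setup is in place. The main practical obstacle will be sign and index bookkeeping: there are roughly a dozen $\mathcal{L}^{\pm,c}$ terms to track through the three components of $\bm{\varphi}$ and $\bm{\psi}$, and getting the combination $\frac12\sum(\jump{\xi^u}+\jump{\xi^q})^2$ for Scheme $\mathcal{D}1$ requires correctly pairing up $\mathcal{L}^-(\xi^u,\xi^q)$ with $\mathcal{L}^-(\xi^q,\xi^u)$ across different equations. Verifying that all $\mathcal{L}^\pm$ contributions involving $\xi^v$ against $\xi^u$ or $\xi^q$ cancel via the mixed adjoint identity is the one place where a sign slip can destroy the claimed form, so I would perform that cancellation carefully before invoking the diagonal identities.
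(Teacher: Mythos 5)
Your proposal is correct and follows essentially the same route as the paper's proof in Appendix A.2: split $\mathcal{B}_j$ into its $\xi$-part and $\eta$-part, reduce the $\xi$-part to the jump and volume terms via the identities of Lemma \ref{lemma:properties_DN} (the paper does this by citing the $L^2$-stability computation and Lemma \ref{lemma:another_energy}, you do it by direct expansion, but it is the same calculation), and then use the Gauss--Radau orthogonality and trace conditions (resp.\ the $L^2$-projection orthogonality) to collapse the $\eta$-part to the stated volume (resp.\ boundary) terms. The sign bookkeeping you flag as the main risk checks out, so no further changes are needed.
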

\begin{proof}The proof is provided in Appendix \ref{proofb}.
\end{proof}

 \begin{itemize}
\item {\bf Estimates for the nonlinear terms}
\end{itemize}


Subsequently, our attention is turned to $\mathcal{H}_j$ involving the nonlinear term $f(u)$,
\begin{equation}\label{H_divided}
\begin{split}
\sum\limits_{j=1}^{N}\mathcal{H}_j(f;u,u_h,\xi^u) &= \sum\limits_{j=1}^{N} (f(u)- f(u_h),\xi^u_x)_{I_j} + (f(u) - f(u_h^{ref}))\jump{\xi^u}_{j+\frac{1}{2}}\\
& + \sum\limits_{j=1}^{N} (f(u_h^{ref}) - \widehat{f(u_h)})\jump{\xi^u}_{j+\frac{1}{2}}\triangleq \mathcal{T}_1 + \mathcal{T}_2.
\end{split}
\end{equation}
The index ``$ref$" denotes the direction of the value on each element interface depending
on the flow direction of the exact solution $u$ in the adjacent elements.  For the dissipative scheme $\mathcal{D}1$,
\begin{align}\label{uref}
\chi^{ref} \triangleq \begin{cases} \chi^+, \ &\text{if} \ f'(u) < 0 \ \text{on}\  I_j \cup x_{j+\frac{1}{2}}\cup I_{j+1} \\
                      \chi^-,\ & \text{if} \ f'(u) > 0 \ \text{on}\   I_j\cup x_{j+\frac{1}{2}} \cup I_{j+1} \\
                      \average{\chi}, \ & \text{otherwise}
        \end{cases}.
\end{align}
For the conservative scheme $\mathcal{C}1$, the direction of the boundary value is considered as $\average{\chi}$ for
simplicity of the proof.
{
\begin{lemma}\label{lemma:H_estimate2}
If the dissipative flux is taken as $f(u) =\displaystyle \frac{1}{p}u^p, p \geq 2$, we have
\begin{align}\label{eqn:H_estimate2_podd}
 \mathcal{T}_1 + \mathcal{T}_2
 & \leq  C_*\norm{\xi^u}^2_{L^2(I)} + Ch^{2k+\mu}  + Ch^{-\frac{3}{2}}\norm{\xi^u}^3_{L^2(I)}
\end{align}
for sufficient small $h$ and $k \geq 1$, where $\mu = 1 $ for even $p$, and $\mu = 2$ for odd $p$.
\end{lemma}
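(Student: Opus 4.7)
The plan is to Taylor-expand the nonlinearity as
$f(u) - f(u_h) = f'(u)(u-u_h) - \tfrac12 f''(\theta)(u-u_h)^2$
and to decompose $u - u_h = \xi^u - \eta^u$. This splits $\mathcal{T}_1 + \mathcal{T}_2$ into a linear-in-$(u-u_h)$ piece, where the projection properties of $\mathcal{P}^-$ do most of the work, plus a quadratic remainder that is where the cubic term $Ch^{-3/2}\norm{\xi^u}^3_{L^2(I)}$ is produced by inverse inequalities; the parity of $p$ enters only through $\mathcal{T}_2$.

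For the linear piece of $\mathcal{T}_1$ I would integrate $(f'(u)\xi^u, \xi^u_x)_{I_j}$ by parts cell by cell, converting the volume part into $-\tfrac12 (f''(u)u_x, (\xi^u)^2)_I$, bounded by $C_\ast \norm{\xi^u}^2_{L^2(I)}$, plus interface contributions. Those interface contributions combine, via a further Taylor expansion of $f(u) - f(u_h^{ref})$, with the boundary piece $\sum_j (f(u) - f(u_h^{ref}))\jump{\xi^u}_{j+1/2}$ already present in $\mathcal{T}_1$. On any interface where the reference direction is ``$-$'', the leading linear-in-$\eta^u$ contribution is annihilated by the defining identity $\mathcal{P}^-\zeta(x_{j+1/2}^-) = \zeta(x_{j+1/2})$ of the Gauss--Radau projection, so all surviving $\eta^u$ pieces fold into $Ch^{2k+2}$ using $\norm{\eta^u}_{L^2(I)} \leq C h^{k+1}$ and Young's inequality. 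For the quadratic remainder, I would use
$$\bigl|\bigl(f''(\theta)(u-u_h)^2, \xi^u_x\bigr)_I\bigr| \leq C_\ast \norm{u-u_h}_{L^\infty(I)} \norm{u-u_h}_{L^2(I)} \norm{\xi^u_x}_{L^2(I)},$$
then the inverse inequality $\norm{\xi^u_x}_{L^2} \leq Ch^{-1}\norm{\xi^u}_{L^2}$ together with $\norm{u-u_h}_{L^\infty} \leq Ch^{-1/2}\norm{\xi^u}_{L^2(I)} + Ch^{k+1/2}$ (the second term being a mild a priori bound to be closed by the usual bootstrap argument in time) and $\norm{u-u_h}_{L^2} \leq \norm{\xi^u}_{L^2(I)} + Ch^{k+1}$. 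After distribution, the new leading contribution is the advertised $Ch^{-3/2}\norm{\xi^u}^3_{L^2(I)}$, and every other piece is absorbable into $C_\ast\norm{\xi^u}^2_{L^2(I)} + Ch^{2k+2}$.

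For $\mathcal{T}_2$, parity is decisive. When $p$ is odd, $f'(u) = u^{p-1} \geq 0$ everywhere, so the Godunov formula reduces to $\widehat{f(u_h)} = f(u_h^-)$ and simultaneously $u_h^{ref} = u_h^-$, giving $\mathcal{T}_2 \equiv 0$ and $\mu = 2$. When $p$ is even, $f'$ changes sign precisely at the zeros of $u$; for a smooth exact solution these sonic points are isolated, so the discrepancy $|f(u_h^{ref}) - \widehat{f(u_h)}|$ is nonzero on only $O(1)$ interfaces and is bounded there by $C|\jump{u_h}|^2 \leq C(|\jump{\xi^u}|^2 + h^{2k+2})$. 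Multiplied by $|\jump{\xi^u}|$ and summed, this contributes at most $Ch^{2k+1}$ after Young's inequality, which is $\mu = 1$.

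The main obstacle is that the coercive interface quantities available from Lemma~\ref{lemma:b} are only $(\jump{\xi^u}+\jump{\xi^q})^2$ and $\jump{\xi^v}^2$, with no direct control on $\jump{\xi^u}^2$ alone. Every jump term generated in the steps above must therefore be disposed of without leaving an uncompensated positive multiple of $\jump{\xi^u}^2$. This is the mechanism that forces a sonic-point interface for even $p$ to cost one power of $h$ from the projection budget, explaining the half-order loss $\mu = 1$, and it is also the reason the bound retains the cubic term $Ch^{-3/2}\norm{\xi^u}^3_{L^2(I)}$ rather than a sharper quadratic bound: improving it would require an $L^\infty$-in-time control on $\xi^u_x$ that the present framework does not supply.
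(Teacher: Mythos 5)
Your overall skeleton (Taylor expansion of $f(u)-f(u_h)$ and $f(u)-f(u_h^{ref})$, a linear piece handled by projection properties plus a quadratic remainder that produces $Ch^{-3/2}\norm{\xi^u}^3_{L^2(I)}$ via inverse inequalities and $\norm{u-u_h}_\infty$) matches the paper's decomposition into $O_1,O_2,O_3$ and $O_4,O_5$. But there is a genuine gap in where you locate the parity-dependent half-order loss. In the paper the loss does \emph{not} come from $\mathcal{T}_2$ at isolated sonic interfaces; it comes from $\mathcal{T}_1$, specifically from the term $\sum_j f'(u)\big(\xi^{ref}-\average{\xi^u}\big)\jump{\xi^u}$ that survives after integrating $(f'(u)\xi^u,\xi^u_x)$ by parts, on the \emph{entire region} where $f'(u)=u^{p-1}<0$ (which is nonempty exactly when $p$ is even and $u$ changes sign). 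On those $O(h^{-1})$ interfaces the upwind trace is the ``$+$'' trace while $\xi^u$ is built from the fixed global projection $\mathcal{P}^-$, so the Gauss--Radau annihilation you invoke (which only works where the reference direction is ``$-$'') is unavailable; completing the square leaves $-\tfrac12 f'(u)\big(\eta^{(+)}-\eta^{u}\big)^2$ at the right traces, where $\eta^{(+)}$ is the $\mathcal{P}^+$ projection error, and summing $O(h^{2k+2})$ per interface over $O(h^{-1})$ interfaces gives exactly $Ch^{2k+1}$, i.e.\ $\mu=1$. This cannot be absorbed elsewhere because, as you yourself note, there is no coercive control of $\jump{\xi^u}^2$ alone. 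Your proposal never confronts these interfaces: you assert that ``all surviving $\eta^u$ pieces fold into $Ch^{2k+2}$,'' which is false precisely on the set $\{f'(u)<0\}$.

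Your attempted replacement source of $\mu=1$ in $\mathcal{T}_2$ does not work quantitatively: at $O(1)$ sonic interfaces a bound $C\jump{u_h}^2\abs{\jump{\xi^u}}$ with $\abs{\jump{\xi^u}}\le Ch^{-1/2}\norm{\xi^u}_{L^2(I)}$ yields, after Young's inequality, terms of order $C\norm{\xi^u}^2_{L^2(I)}+Ch^{4k+3}$ plus the cubic term --- strictly better than $h^{2k+1}$ --- so $\mathcal{T}_2$ cannot account for the even-$p$ loss (and the ``finitely many sonic points'' count is itself an extra assumption on $u$ that the paper never needs). A smaller inaccuracy: for odd $p\ge 3$, $f'(u)=u^{p-1}$ vanishes at zeros of $u$, so by the definition of $u_h^{ref}$ the reference value is $\average{u_h}$ there and $\mathcal{T}_2$ is not identically zero; the paper handles this by inserting an intermediate state $u_h^*$ (splitting $\mathcal{T}_2=O_4+O_5$) and using $\abs{f'}\le C_*(h+\norm{u-u_h}_\infty)$ near such points, which still yields only absorbable contributions. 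Your conclusion for odd $p$ is therefore right, but not for the reason you give.
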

}
\begin{proof}The proof is provided in Appendix \ref{proofH_estimate2}.
\end{proof}
{
\begin{lemma}\label{lemma:hc1}
If the conservative flux is taken as $f(u) = \displaystyle\frac{1}{p}u^p, p \geq 2$, then
\begin{align}\label{eqn:hc1}
\mathcal{T}_1 + \mathcal{T}_2 \leq C_*\norm{{\xi}^u}^2_{L^2(I)} + Ch^{2k} + Ch^{-\frac{3}{2}}\norm{{\xi}^u}^3_{L^2(I)}
\end{align}
for sufficient small $h$ and $k \geq 2$.
\end{lemma}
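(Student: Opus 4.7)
The plan is to decompose $\mathcal{T}_1+\mathcal{T}_2$ by Taylor-expanding $f$ about $u$ and grouping terms according to their degree in the error components $\eta^u$ and $\xi^u$. Specifically, I write $f(u) - f(u_h) = f'(u)(\eta^u - \xi^u) + \tfrac12 f''(\tilde u)(\eta^u - \xi^u)^2$ for the volume integrals of $\mathcal{T}_1$ and $f(u) - f(\average{u_h}) = f'(u)(\average{\xi^u} - \average{\eta^u}) + \tfrac12 f''(\tilde u)(\average{\xi^u} - \average{\eta^u})^2$ for the explicit interface contribution. The resulting decomposition consists of a principal part that is linear in $(\eta^u,\xi^u)$, quadratic volume remainders, and purely cubic (or higher) boundary contributions once the principal part of $\mathcal{T}_2$ is incorporated.

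For the linear volume piece $(f'(u)\eta^u,\xi^u_x)_{I_j}$ I exploit the $L^2$-orthogonality $(\eta^u,w)_{I_j}=0$ for all $w\in P^{k-1}(I_j)$: replacing $f'(u)$ on $I_j$ by its best $P^{k-1}$ polynomial kills the leading contribution and leaves a Bramble--Hilbert remainder of order $h^k\|\eta^u\|_{L^2(I_j)}\|\xi^u_x\|_{L^2(I_j)}$, which combined with \eqref{eqn:inverse inequality}(i), $\|\eta^u\|\le Ch^{k+1}$, and Young yields $C\|\xi^u\|^2 + Ch^{2k}$. The companion $-(f'(u)\xi^u,\xi^u_x)_{I_j}$ is treated by cellwise integration by parts, producing the interior contribution $\tfrac12(f''(u)u_x,(\xi^u)^2)_{I_j}\le C\|\xi^u\|^2$ plus boundary residues. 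These combine with the leading boundary part of $\mathcal{T}_1$ into a net $2\sum_j f'(u)_{j+1/2}\average{\xi^u}\jump{\xi^u}_{j+1/2}$ together with $-\sum_j f'(u)_{j+1/2}\average{\eta^u}\jump{\xi^u}_{j+1/2}$; the latter is bounded by Cauchy--Schwarz using $\sum_j \average{\eta^u}^2 \le Ch^{2k+1}$ from \eqref{projection error} and $\sum_j \jump{\xi^u}^2 \le Ch^{-1}\|\xi^u\|^2$, giving $Ch^k\|\xi^u\|$ which Young absorbs into $C\|\xi^u\|^2+Ch^{2k}$.

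For $\mathcal{T}_2$ I use the explicit expansion of the conservative flux. A direct Taylor computation of $F(u_h^\pm)=(u_h^\pm)^{p+1}/(p(p+1))$ about $\average{u_h}$ gives
\[
\widehat{f(u_h)} - f(\average{u_h}) = \tfrac{1}{24}\,f''(\average{u_h})\jump{u_h}^2 + O\bigl(\jump{u_h}^4\bigr).
\]
Substituting $\jump{u_h} = \jump{\eta^u} - \jump{\xi^u}$ expands $\mathcal{T}_2$ into a finite linear combination of monomials in $\jump{\eta^u},\jump{\xi^u}$ of total jump-degree at least three. The pure cubic piece $\sum_j \jump{\xi^u}^3$ is bounded by $\max_j|\jump{\xi^u}|\cdot\sum_j\jump{\xi^u}^2 \le Ch^{-3/2}\|\xi^u\|^3$ via \eqref{eqn:inverse inequality}(ii)--(iii); the mixed monomials use $|\jump{\eta^u}|\le Ch^{k+1/2}$ and $\sum_j \jump{\eta^u}^2 \le Ch^{2k+1}$, and the hypothesis $k\ge 2$ ensures that every mixed contribution yields a power of $h$ at least $2k$ after Young. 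The quadratic remainders in $\mathcal{T}_1$ carrying $f''(\tilde u)(\eta^u-\xi^u)^2$ decompose into $(\eta^u)^2$, $\eta^u\xi^u$, and $(\xi^u)^3$ (after one more IBP) pieces that fit the same three bound families.

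The main obstacle is the residual quadratic-in-$\xi^u$ boundary term $2\sum_j f'(u)_{j+1/2}\average{\xi^u}\jump{\xi^u}_{j+1/2}$ left over after the manipulations above: a direct Cauchy--Schwarz bound based on the trace--inverse inequality produces an inadmissible $h^{-1}\|\xi^u\|^2$ factor that would preclude Gronwall closure at order $h^{2k}$. My plan is to eliminate this loss by rewriting $\average{\xi^u}\jump{\xi^u}=\tfrac12\jump{(\xi^u)^2}$ and applying the piecewise fundamental theorem of calculus $\sum_j g_{j+1/2}\jump{W}_{j+1/2} = -\sum_j \int_{I_j}(gW)_x\,dx$ for the smooth weight $g=f'(u)$ and piecewise polynomial $W=(\xi^u)^2$; the resulting identity, combined with the Abel estimate $|f'(u)_{j+1/2}-f'(u)_{j-1/2}|\le Ch$ transferring the differencing in $j$ onto the smooth weight, collapses the sum into a volume integral of the form $(f''(u)u_x,(\xi^u)^2)_I$ plus $O(h)\sum_j(\xi^u_{j+1/2}^-)^2 \le O(1)\|\xi^u\|^2$ remainders controlled through the trace inverse inequality. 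Collecting this with the estimates of the previous paragraphs produces the claimed inequality \eqref{eqn:hc1}.
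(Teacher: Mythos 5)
Your overall route is the same as the paper's: Taylor-expand $f$ about the exact solution $u$, split $\mathcal{T}_1$ into a linear part (handled by the $L^2$-orthogonality of $\eta^u$ plus the smoothness of $f'(u)$, which is exactly where the suboptimal $h^{2k}$ enters through the non-vanishing interface values $\average{\eta^u}$), a quadratic-in-$\xi^u$ part (reduced to a volume integral by cellwise integration by parts), and a quadratic remainder giving the $h^{-3/2}\norm{\xi^u}^3$ term; and to treat $\mathcal{T}_2$ as the quadratic-in-$\jump{u_h}$ residual of the midpoint rule for $F$. However, two steps as written do not hold up. First, replacing $f'(u)$ by its best $P^{k-1}$ approximant does \emph{not} kill $(f'(u)\eta^u,\xi^u_x)_{I_j}$ by orthogonality: the product of a $P^{k-1}$ weight with $\xi^u_x\in P^{k-1}$ lies in $P^{2k-2}$, which exceeds the $P^k$ orthogonality range of the $L^2$ projection once $k\geq 3$. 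You must use a constant (or at most linear) approximant, as the paper does with $f'(u_j)$; the resulting $O(h)$ gain already suffices, so this is a fixable slip rather than a fatal one.

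The second issue is more substantive. Your ``main obstacle,'' the residual term $2\sum_j f'(u)_{j+\frac{1}{2}}\average{\xi^u}\jump{\xi^u}_{j+\frac{1}{2}}$, does not exist: with the central value $\xi^{ref}=\average{\xi^u}$, the interface contribution $\sum_j f'(u)\average{\xi^u}\jump{\xi^u}$ from the linear part of $\mathcal{T}_1$ and the boundary residues $-\frac{1}{2}\sum_j f'(u)\jump{(\xi^u)^2}=-\sum_j f'(u)\average{\xi^u}\jump{\xi^u}$ produced by integrating $(f'(u)\xi^u,\xi^u_x)_{I_j}$ by parts cancel \emph{exactly}, leaving only $-\frac{1}{2}(f''(u)u_x\,\xi^u,\xi^u)_I\leq C_*\norm{\xi^u}^2_{L^2(I)}$. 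This exact cancellation is the whole point of the central flux and is precisely why the paper's $O_2$ collapses to a pure volume term. Moreover, your proposed remedy is circular: the identity $\sum_j g_{j+\frac{1}{2}}\jump{W}_{j+\frac{1}{2}}=-\sum_j\int_{I_j}(gW)_x\,dx$ applied to $W=(\xi^u)^2$ reproduces the very volume term $2(f'(u)\xi^u,\xi^u_x)_I$ that you already consumed in generating the boundary residue, so it cannot independently bound the quantity — it only returns you to the tautology that the terms cancel. Once the bookkeeping is corrected the argument closes exactly as in the paper, so the proof is recoverable, but as written the final paragraph neither identifies a real obstacle nor supplies a valid resolution of the one it invents.
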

}
\begin{proof}The proof is provided in Appendix \ref{proofhc1}.
\end{proof}

{
\begin{lemma}\label{lemma}
Let $v\in V_h^k$, if it satisfies
\begin{align}\label{lemma:condition}
\frac{d}{dt}\norm{v}^2_{L^2(I)}  \leq C_*\norm{v}^2_{L^2{(I)}} + Ch^{2k+\tilde{\mu}}+ Ch^{-\frac{3}{2}}\norm{v}^3_{L^2(I)},
\end{align}
then there holds
\begin{align}\label{lemma:result}
\norm{v}^2_{L^2(I)}  \leq  Ch^{2k+\tilde{\mu}},
\end{align}
where $\tilde{\mu}$ is a constant and $k \geq \frac{3-\tilde{\mu}}{2}$.

\end{lemma}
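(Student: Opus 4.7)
The plan is a standard bootstrap (continuation) argument. The obstacle to a direct application of Gronwall's inequality is the cubic term $Ch^{-3/2}\norm{v}^3_{L^2(I)}$: its coefficient $h^{-3/2}$ blows up as $h \to 0$, so it cannot be absorbed into $C_*\norm{v}^2_{L^2(I)}$ without some a priori control on the size of $v$.

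First I would introduce the a priori hypothesis
\[
\norm{v(t)}_{L^2(I)} \le h^{3/2}, \qquad t \in [0, T^\star],
\]
and let $T^\star$ be the supremum of times $t \in [0, T]$ for which it holds. Since the initial error $\norm{v(0)}_{L^2(I)}$ comes from projecting the initial data (hence is $O(h^{k+1})$ and certainly much smaller than $h^{3/2}$ for small $h$), continuity gives $T^\star > 0$. Under this a priori hypothesis the cubic contribution is tamed,
\[
Ch^{-3/2}\norm{v}^3_{L^2(I)} \le Ch^{-3/2}\cdot h^{3/2}\cdot\norm{v}^2_{L^2(I)} = C\norm{v}^2_{L^2(I)},
\]
so on $[0, T^\star]$ the hypothesis (\ref{lemma:condition}) collapses to the linear differential inequality
\[
\frac{d}{dt}\norm{v}^2_{L^2(I)} \le (C_* + C)\norm{v}^2_{L^2(I)} + Ch^{2k+\tilde\mu}.
\]

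Gronwall's inequality then gives
\[
\norm{v(t)}^2_{L^2(I)} \le e^{(C_* + C)T}\bigl(\norm{v(0)}^2_{L^2(I)} + CTh^{2k+\tilde\mu}\bigr) \le C_1 h^{2k+\tilde\mu}, \qquad t \in [0, T^\star],
\]
which is already the bound (\ref{lemma:result}) on the sub-interval. To close the bootstrap I would invoke the hypothesis $k \ge (3-\tilde\mu)/2$, equivalently $2k+\tilde\mu \ge 3$, which yields
\[
\norm{v(t)}_{L^2(I)} \le \sqrt{C_1}\, h^{k+\tilde\mu/2} = \sqrt{C_1}\, h^{3/2}\cdot h^{(2k+\tilde\mu-3)/2},
\]
strictly less than $h^{3/2}$ for $h$ small. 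By continuity of $t \mapsto \norm{v(t)}_{L^2(I)}$ the maximality of $T^\star$ then forces $T^\star = T$, and (\ref{lemma:result}) holds on the full interval $[0, T]$.

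The hard part is the interplay between the a priori scale and the conclusion: the $h^{-3/2}$ weight on the cubic term dictates that the a priori scale must be no coarser than $h^{3/2}$, and the Gronwall conclusion must be no coarser than $h^{3/2}$ to close the continuation. The condition $k \ge (3-\tilde\mu)/2$ in the statement is precisely this consistency requirement; it is what makes the scheme-agnostic Gronwall argument tight enough to be self-improving.
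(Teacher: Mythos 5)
Your argument is correct and hinges on the same exponent identity as the paper's --- $h^{-3/2}\,h^{(2k+\tilde\mu)/2}=h^{(2k+\tilde\mu-3)/2}$, which is exactly where $k\ge\frac{3-\tilde\mu}{2}$ enters --- but the mechanism is genuinely different. The paper never linearizes the inequality: following \cite{Liu2015_NM}, it sets $G(t)=h^{2k+\tilde\mu}+\int_0^t\big(\norm{v}^2_{L^2(I)}+h^{-\frac32}\norm{v}^3_{L^2(I)}\big)\,d\tau$, so that $\norm{v(t)}^2_{L^2(I)}\le CG(t)$ and $G'\le C\big(G+h^{-\frac32}G^{\frac32}\big)$, and then applies a nonlinear (Bihari/Osgood-type) Gronwall step: dividing by the right-hand side and integrating yields $Q(G/G(0))\le CT$ with $Q(y)=\int_1^y\big(\zeta+h^{(2k+\tilde\mu-3)/2}\zeta^{3/2}\big)^{-1}d\zeta$, whence $G\le\tilde C\,G(0)=\tilde C\,h^{2k+\tilde\mu}$ by monotonicity of $Q$. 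Your continuation-plus-linear-Gronwall route is more elementary and makes the role of the a priori scale $h^{3/2}$ explicit; the paper's route avoids introducing a threshold but instead needs the equation $Q(\tilde C)=CT$ to be solvable. Both proofs implicitly assume $\norm{v(0)}^2_{L^2(I)}\le Ch^{2k+\tilde\mu}$ (the paper bakes this into $G(0)$; you invoke the initial projection), which is harmless since the lemma is only applied to $\xi^u$.

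One shared caveat is worth flagging. At the borderline $2k+\tilde\mu=3$ (which the theorem admits, e.g.\ $k=1$, $\tilde\mu=1$ for even $p$), your closing step does not quite close as written: the improved bound is $\sqrt{C_1}\,h^{3/2}$ with $h^{(2k+\tilde\mu-3)/2}=1$, which is not strictly below the threshold $h^{3/2}$ unless $C_1<1$, and $C_1$ contains an exponential Gronwall factor. The paper's proof has the mirror-image defect there: with $h^{(2k+\tilde\mu-3)/2}=1$ one computes $\lim_{y\to\infty}Q(y)=\int_1^\infty(\zeta+\zeta^{3/2})^{-1}d\zeta=2\ln 2<\infty$, so $Q(\tilde C)=CT$ is solvable only if $CT<2\ln 2$. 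For $k>\frac{3-\tilde\mu}{2}$ strictly, both arguments are airtight for $h$ small enough; at equality both need either a small-$T$ restriction or a more careful choice of threshold. Since your gap is exactly the one already present in the paper, I would accept the proof, ideally with a remark acknowledging the borderline case.
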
}
\begin{proof}The proof is provided in Appendix \ref{prooflemma}.
\end{proof}

\begin{itemize}
\item  {\bf Final error estimates in Theorem \ref{thm1}}
\end{itemize}

Finally, we are ready to get the error estimates in Theorem \ref{thm1}. We divide the final error $\norm{u-u_h}_{L^2(I)}$ into two parts: the projection error $\eta^u$  and the approximation error  $ {\xi^u}$. Once we prove the order of the
approximation errors, the results of Theorem \ref{thm1} is derived by triangle inequality and the interpolation inequality \eqref{projection error} straightforwardly. Therefore, we mainly focus on $\norm{\xi^u}_{L^2(I)}$.

\begin{itemize}
\item [] {\bf Estimates in  \eqref{eqn:thm1d}:}
\end{itemize}
 Gathering together the estimates \eqref{eqn:B_estimate}, \eqref{eqn:H_estimate2_podd}, the final error estimate for Scheme $\mathcal{D}1$ is listed as follows,
\begin{align*}
&(\xi^u_t, \xi^u)_{I} + \norm{\xi^v}^2_{L^2(I)} + \norm{\xi^q}^2_{L^2(I)} \leq -(\xi^u,\xi^q)_I + ( \eta^q + \eta^u_t,\xi^u)_I +(\eta^q-\eta^v,\xi^q)_I +(\eta^q+\eta^v, \xi^v)_I \\
&+  C_*\norm{\xi^u}^2_{L^2(I)} + Ch^{2k+\mu}  + Ch^{-\frac{3}{2}}\norm{\xi^u}^3_{L^2(I)}
\end{align*}
 where $\mu = 1$ for even $p$,  and $\mu = 2$ for odd $p$.
\begin{itemize}
\item [] {\bf Estimates in  \eqref{eqn:thm1c}:}
\end{itemize}
 Lemma \ref{lemma:b} and Lemma \ref{lemma:hc1} lead us to the error estimate for Scheme $\mathcal{C}1$
\begin{align*}
&({\xi}_t^u,{\xi}^u)_I  + \norm{{\xi}^v}^2_{L^2(I)} + \norm{{\xi}^q}^2_{L^2(I)}
 \leq   C_*\norm{\xi^u}^2_{L^2(I)} + Ch^{2k} + Ch^{-\frac{3}{2}}\norm{\xi^u}^3_{L^2(I)} \\
& - ({\xi}^u,{\xi}^q)_{I}+ \sum\limits_{j=1}^{N} <\widehat{{\eta}^q}+\widehat{{\eta}^u} - \widehat{{\eta}^v}, {\xi}^u + {\xi}^q >_{I_j} - \sum\limits_{j=1}^{N}  <\widehat{{\eta}^q}+\widehat{{\eta}^u} +\widehat{{\eta}^v}, {\xi}^v>_{I_j}.\notag
\end{align*}
Using the Young's inequality with weights as
 $$\abs{a_1}\abs{a_2} = \abs{\sqrt{2}a_1}\abs{\frac{1}{\sqrt{2}}a_2} \leq a_1^2 +\frac{1}{4}a_2^2,$$
 and inverse inequality (ii) in (2.15), we get
 {
\begin{align}\label{new_estiamte}
 \frac{1}{2}\frac{d}{dt}\norm{\xi^u}^2_{L^2(I)}  \leq C_*\norm{\xi^u}^2_{L^2{(I)}} + Ch^{2k+\tilde{\mu}}+
Ch^{-\frac{3}{2}}\norm{\xi^u}^3_{L^2(I)} ,
\end{align}}
where $\tilde{\mu} = \mu$ for Scheme $\mathcal{D}1$, i.e. the parity of $p$ determines the different convergence rates, and $\tilde{\mu} = 0$ for Scheme $\mathcal{C}1$. Subsequently, we apply Lemma \ref{lemma} and finally get the results of Theorem \ref{thm1}.

{
\begin{remark}
There is a further result for the conservative scheme with even $k$ and odd $N$, i.e. the optimal order of accuracy can be proved in \cite{Liu2015_NM}.
\end{remark}
}

\section{The LDG scheme for equation (\ref{eqn:FW_intro2})}\label{DG2}
Based on the form
\begin{equation}\label{eqn:FW1}
u_t - u_{xxt} + f(u)_x + u_x = f(u)_{xxx},
\end{equation}
we construct another two LDG schemes, including a dissipative scheme and a conservative scheme named by
Scheme $ \mathcal{D}2$, and Scheme $ \mathcal{C}2$, respectively. The conservation or dissipation and the
corresponding error estimate are also provided in this section.

\subsection{The LDG scheme}\label{LDG2}
\label{se:ldg2}
Referring to \cite{Xu2008_siam, Xu2011_Cicp}, we split the above equation \eqref{eqn:FW1} into
\begin{align}
\label{FW_1} &w = u - u_{xx}, \\
\label{FW_2} &w_t + f(u)_x + u_x = f(u)_{xxx},
\end{align}
with periodic boundary condition.  Then we first rewrite the above equation \eqref{FW_1} into a first-order system
\begin{equation}\label{FW1Ode1}
\begin{split}
&u - r_x = w, \\
&r - u_x = 0.
\end{split}
\end{equation}
By this standard elliptic equation \eqref{FW_1}, we can solve $u$ from a known $w$. Then the LDG method for \eqref{FW1Ode1} is formulated as follows: Find numerical solutions $u_h$, $r_h \in V_h^k$  such that
\begin{subnumcases}{\label{scheme:FW1DG}}
(u_h,\phi)_{I_j} - <\widehat{r_h}, \phi>_{I_j} + (r_h, \phi_x)_{I_j} = (w_h , \phi)_{I_j},\label{scheme:FW1DG1}\\
(r_h , \psi)_{I_j} - <\widehat{u_h}, \psi>_{I_j} + (u_h, \psi_x)_{I_j} = 0 ,\label{scheme:FW1DG2}
\end{subnumcases}
for all test functions $\phi, \psi \in V_h^k$.

For \eqref{FW_2}, we can also rewrite it into a first-order system:
\begin{equation} \label{FW1Ode2}
\begin{split}
&w_t + s = p_x,\\
&p = s_x - u,\\
&s = f(u)_x.
\end{split}
\end{equation}
 Subsequently, we define the LDG scheme for \eqref{FW1Ode2} as: Find numerical solutions $w_h,u_h,$  $p_h, s_h \in V_h^k$ such that
\begin{subnumcases}{\label{scheme:FW2DG}}
((w_h)_t,\varphi)_{I_j}  + (s_h , \varphi)_{I_j} =  <\widehat{p_h}, \varphi>_{I_j} - (p_h, \varphi_x)_{I_j},\label{scheme:FW2DG1}\\
(p_h , \vartheta)_{I_j} =  <\widehat{s_h}, \vartheta>_{I_j} - (s_h, \vartheta_x)_{I_j} - (u_h,\vartheta)_{I_j},\label{scheme:FW2DG2}\\
(s_h , \sigma)_{I_j} =  <\widehat{f(u_h)}, \sigma>_{I_j} - (f(u_h), \sigma_x)_{I_j},\label{scheme:FW2DG3}
\end{subnumcases}
for all test functions $\varphi, \vartheta, \sigma \in V_h^k$.

$\mathbf{Scheme \ \mathcal{D}2 :}$
The dissipative numerical flux for the nonlinear term $f(u)$ is again taken as the Godunov flux \eqref{eqn: diss_flux_non}.
The remaining numerical fluxes are considered to guarantee $L^2$ stability as
\begin{equation}\label{fluxD1:lin}
\widehat{u_h} = u_h^+,\ \widehat{r_h} = r_h^-,\ \widehat{s_h} = s_h^+, \ \widehat{p_h} = p_h^-.
\end{equation}
Numerically,  Scheme $\mathcal{D}2$ can obtain the optimal order of accuracy for the variable $u$.

$\mathbf{Scheme \ \mathcal{C}2 :}$
The conservative numerical flux for the nonlinear term $f(u)$ is the same as \eqref{fluxC1:non}, and then we choose the central fluxes for $u_h, r_h, s_h, p_h$,
\begin{equation}\label{fluxC1:lin}
\widehat{u_h} = \average{u_h},\ \widehat{r_h} = \average{r_h},\ \widehat{s_h} = \average{s_h}, \ \widehat{p_h} = \average{p_h}.
\end{equation}
Numerically,  Scheme $\mathcal{C}2$ can obtain the $k$-$th$ order when piecewise polynomials of odd degree $k$ are used,
and $(k+1)$-$th$ order for even degree $k$.

\subsection{Dissipation and conservation}
In this section, a proposition demonstrating dissipation or conservation for the proposed LDG schemes in Section \ref{LDG2} is stated, including $E_1$ conservation and energy $E_2$ dissipation or conservation.

\begin{prop}\label{prop1}

For periodic problems, we have
\begin{itemize}
\item {Scheme $\mathcal{D}2 $}
\begin{align}
\label{prop1_d1}  \frac{d}{dt} E_1(u_h) =  \frac{d}{dt}\int_I w_h dx = 0,\quad\quad \frac{d}{dt} E_2(u_h) = \frac{d}{dt}\int_I u_h^2 dx \leq 0.
\end{align}
\item {Scheme $\mathcal{C}2 $}
\begin{align}
\label{prop1_c1}  \frac{d}{dt} E_1(u_h) = \frac{d}{dt} \int_I w_h dx = 0,\quad\quad \frac{d}{dt} E_2(u_h) = \frac{d}{dt}\int_I u_h^2 dx = 0.
\end{align}

\end{itemize}

\end{prop}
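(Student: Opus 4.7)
The plan is to follow the same test-function strategy used in the proof of Proposition \ref{prop3}. For the $E_1$ part, substituting $\varphi \equiv 1$ into \eqref{scheme:FW2DG1} and $\sigma \equiv 1$ into \eqref{scheme:FW2DG3} and invoking the periodic boundary condition to make the boundary contributions of $\widehat{p_h}$ and $\widehat{f(u_h)}$ telescope yields $\int_I s_h\,dx = 0$ and therefore $\frac{d}{dt}\int_I w_h\,dx = 0$. This step is identical for Scheme $\mathcal{D}2$ and Scheme $\mathcal{C}2$, because the specific form of the fluxes is immaterial when tested against a constant.

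For $E_2$, the idea is to pick test functions in the five cell-wise equations \eqref{scheme:FW1DG}--\eqref{scheme:FW2DG} so that, after summation over all cells and repeated use of the algebraic identities in Lemma \ref{lemma:properties_DN}, every bilinear cross term in the auxiliary variables $r_h, p_h, s_h$ cancels against its adjoint partner, leaving a clean energy balance for $\|u_h\|^2_{L^2(I)}$. Mirroring the proof of Proposition \ref{prop3}, I would take $\varphi = u_h$ in \eqref{scheme:FW2DG1}, $\vartheta = -r_h$ in \eqref{scheme:FW2DG2}, and $\sigma = u_h$ in \eqref{scheme:FW2DG3} from the evolution block. In the elliptic block \eqref{scheme:FW1DG1}--\eqref{scheme:FW1DG2}, I would use the time-differentiated versions tested with $\phi = u_h$ and $\psi = r_h$ (which produces the $\frac{1}{2}\frac{d}{dt}\|u_h\|^2$ time-derivative term through the elliptic coupling), together with further non-differentiated tests against $p_h$ and $s_h$ to eliminate the cross pairings $(r_h,p_h)_I$, $(s_h,w_h)_I$, and the mixed $\mathcal{L}^{\pm,c}$-terms produced by \eqref{scheme:FW2DG2}--\eqref{scheme:FW2DG3}.

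Summing all these identities and repeatedly invoking the adjoint relations $\mathcal{L}^+(\omega,\phi)+\mathcal{L}^-(\phi,\omega)=0$ and $\mathcal{L}^c(\omega,\phi)+\mathcal{L}^c(\phi,\omega)=0$, together with the diagonal formulas $\mathcal{L}^+(\omega,\omega)=-\tfrac{1}{2}\sum_j\jump{\omega}^2_{j+1/2}$ and $\mathcal{L}^c(\omega,\omega)=0$ from Lemma \ref{lemma:properties_DN}, the expected outcome is a clean identity of the shape $\frac{1}{2}\frac{d}{dt}\|u_h\|^2_{L^2(I)} + \mathcal{N}^{c,d}(u_h,u_h) + \mathcal{R} = 0$, where $\mathcal{R}$ is a sum of sign-definite squared-jump terms in the various variables. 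For Scheme $\mathcal{D}2$, the Godunov-flux property $\mathcal{N}^d(u_h,u_h) \geq 0$ and $\mathcal{R} \geq 0$ give the dissipation $\frac{d}{dt}\int_I u_h^2\,dx \leq 0$; for Scheme $\mathcal{C}2$, $\mathcal{N}^c(u_h,u_h) = 0$ and $\mathcal{R} = 0$, yielding exact conservation.

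The main obstacle will be the bookkeeping: five equations tested with several different functions produce well over a dozen bilinear pieces, and the cancellation of each non-sign-definite cross term against its adjoint partner must be checked carefully. The asymmetric flux pairings $\widehat{u_h}=u_h^+$ versus $\widehat{r_h}=r_h^-$ and $\widehat{s_h}=s_h^+$ versus $\widehat{p_h}=p_h^-$ in \eqref{fluxD1:lin} (respectively the central choices in \eqref{fluxC1:lin}) are precisely what enables the cross-term cancellation through the adjoint identities, so verifying explicitly that no unsigned residue survives once all test functions are substituted is the computational heart of the argument.
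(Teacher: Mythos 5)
Your $E_1$ argument and your overall strategy (choose test functions, sum over cells, cancel cross terms via the adjoint identities of Lemma \ref{lemma:properties_DN}) match the paper, but the specific test functions you propose for the $E_2$ part do not close the argument. Tracing your choices: \eqref{scheme:FW2DG1} with $\varphi=u_h$, \eqref{scheme:FW2DG3} with $\sigma=u_h$, and the time-differentiated elliptic equation \eqref{scheme:FW1DG1_t} with $\phi=u_h$ combine to give
\begin{equation*}
\tfrac{1}{2}\tfrac{d}{dt}\norm{u_h}^2_{L^2(I)}+\mathcal{N}(u_h,u_h)+\mathcal{L}^+\bigl(u_h,\,p_h+(r_h)_t\bigr)=0 ,
\end{equation*}
and the only way to evaluate $\mathcal{L}^+(u_h,p_h)$ and $\mathcal{L}^+(u_h,(r_h)_t)$ is through \eqref{scheme:FW1DG2}, which turns them into $(r_h,p_h)_I$ and $\tfrac{1}{2}\tfrac{d}{dt}\norm{r_h}^2_{L^2(I)}$. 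You are then left with $\tfrac{1}{2}\tfrac{d}{dt}(\norm{u_h}^2+\norm{r_h}^2)+\mathcal{N}(u_h,u_h)+(r_h,p_h)_I=0$: the quantity $\norm{r_h}^2$ (a discrete $\norm{u_x}^2$) is not part of $E_2$, and the cross term $(r_h,p_h)_I$ has no sign and does not cancel against anything your remaining tests produce. This is not a bookkeeping nuisance but a structural failure: at the continuous level your combination is exactly ``multiply \eqref{eqn:FW1} by $u$'', which yields $\tfrac{1}{2}\tfrac{d}{dt}(\norm{u}^2+\norm{u_x}^2)=\int_I f(u)_x u_{xx}\,dx$, whose right-hand side is neither zero nor sign-definite, so no statement about $E_2=\int_I u_h^2\,dx$ can come out of it.

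The missing idea is that $\tfrac{1}{2}\tfrac{d}{dt}\norm{u_h}^2$ must be generated from the zeroth-order term $-(u_h,\vartheta)_{I_j}$ in \eqref{scheme:FW2DG2} by taking $\vartheta=(u_h)_t$, not from $((u_h)_t,u_h)$ in the differentiated elliptic block tested with $u_h$. The paper takes $\varphi=\phi=-p_h$ and $-(r_h)_t$, $\vartheta=\psi=(u_h)_t$ and $s_h$, and $\sigma=-u_h$; with this pairing every volume cross term ($((w_h)_t,p_h)$, $((w_h)_t,(r_h)_t)$, $((u_h)_t,p_h)$, $((u_h)_t,(r_h)_t)$, $(s_h,(r_h)_t)$, $(s_h,p_h)$, $(s_h,u_h)$) cancels against its mirror image, and the surviving boundary operators assemble into $\tfrac{1}{2}\sum_j\bigl((\jump{(r_h)_t}+\jump{p_h})^2+(\jump{(u_h)_t}+\jump{s_h})^2\bigr)$ for Scheme $\mathcal{D}2$ (zero for $\mathcal{C}2$), giving exactly $((u_h)_t,u_h)_I+\mathcal{N}(u_h,u_h)+(\text{nonnegative})=0$. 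You would need to replace your $E_2$ test-function selection by this one for the proof to go through.
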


\begin{proof}
 Owing to the form of conservation law we give in equation \eqref{FW_2}, we can have the $E_1$ conservativeness with periodic boundary conditions for  Scheme $\mathcal{D}2, \mathcal{C}2$ trivially.

Subsequently,  we will begin the proof of $E_2$ dissipation or conservation. First, for the two equations in \eqref{scheme:FW1DG}, we take a first-order temporal derivative as
\begin{subnumcases}{\label{scheme:FW1DG_t}}
((u_h)_t,\phi)_{I_j} - <\widehat{(r_h)}_t, \phi>_{I_j} + ((r_h)_t, \phi_x)_{I_j} = ((w_h)_t , \phi)_{I_j},\label{scheme:FW1DG1_t}\\
((r_h)_t , \psi)_{I_j} - <\widehat{(u_h)}_t, \psi>_{I_j} + ((u_h)_t, \psi_x)_{I_j} = 0 .\label{scheme:FW1DG2_t}
\end{subnumcases}
Since the numerical schemes \eqref{scheme:FW1DG_t}, \eqref{scheme:FW2DG} hold for any test function in test space $V_h^k$, we choose
\begin{align*}
\varphi = \phi =- p_h \ \text{and} \ -(r_h)_t, \ \vartheta = \psi = (u_h)_t \ \text{and} \  s_h, \sigma = -u_h.
\end{align*}
After summation of corresponding equalities over all intervals, Lemma \ref{lemma:properties_DN} leads us to the energy stability
\begin{itemize}
\item  For the dissipative scheme $\mathcal{D}2$
\begin{align*}
&((u_h)_t, u_h)_{I} + \mathcal{N}^{d}(u_h,u_h) + \mathcal{L}^-(p_h,p_h)+ \mathcal{L}^-((r_h)_t,(r_h)_t)-\mathcal{L}^+(s_h,s_h)-\mathcal{L}^+((u_h)_t,(u_h)_t)  \\
&+\mathcal{L}^-((r_h)_t,p_h)+ \mathcal{L}^-(p_h,(r_h)_t) -\mathcal{L}^+((u_h)_t, s_h)-\mathcal{L}^+( s_h,(u_h)_t)\\
& = ((u_h)_t, u_h)_{I} + \mathcal{N}^{d}(u_h,u_h) + \frac{1}{2}\sum\limits_{j=1}^{N}\big( (\jump{(r_h)_t}+\jump{p_h})^2_{j+\frac{1}{2}} +(\jump{(u_h)_t}+\jump{s_h})^2_{j+\frac{1}{2}}\big) =0;
\end{align*}
\item  For the conservative scheme $\mathcal{C}2$
\begin{align*}
&((u_h)_t, u_h)_{I} + \mathcal{N}^{c}(u_h,u_h) + \mathcal{L}^c(p_h,p_h)+ \mathcal{L}^c((r_h)_t,(r_h)_t)-\mathcal{L}^c(s_h,s_h)-\mathcal{L}^c((u_h)_t,(u_h)_t)  \\
&+\mathcal{L}^c((r_h)_t,p_h)+ \mathcal{L}^c(p_h,(r_h)_t) -\mathcal{L}^c((u_h)_t, s_h)-\mathcal{L}^c( s_h,(u_h)_t) = ((u_h)_t, u_h)_{I} = 0,
\end{align*}
\end{itemize}

 i.e. the results in \eqref{prop1_d1} and \eqref{prop1_c1}.
\end{proof}

%
%

\subsection{Error estimates}
In this section, we will provide error estimates of the LDG schemes in Section \ref{se:ldg2} for the sufficiently smooth exact solution  of the Fornberg-Whitham type equations with periodic boundary conditions. With the preparations in Section \ref{thm:prepare}, we directly give the theorem of the error estimates.

\subsubsection{The main error estimate results}
\begin{theorem}\label{thm2}
 It is assumed that the Fornberg-Whitham type equations \eqref{eqn:FW1}  with periodic boundary condition has a sufficiently smooth exact solution $u$. The numerical solution $u_h$ satisfies the semi-discrete LDG scheme \eqref{scheme:FW1DG} and \eqref{scheme:FW2DG}. For regular partitions of $I = (a, b)$, and the finite element
space $V^k_h$, there hold the following error estimates for small enough $h$,
\begin{itemize}
\item {Scheme $\mathcal{D}2 $}
\begin{align}\label{eqn:thm2d}
 \begin{cases}&\norm{ u - u_h}_{L^2(I)} \leq Ch^{k+1}, \quad \text{if p is odd},\\
                                                      &\norm{ u - u_h}_{L^2(I)} \leq Ch^{k+\frac{1}{2}}, \quad \text{if p is even}.
                                                 \end{cases}, \ k \geq 1
                                                 \end{align}

\item {Scheme $\mathcal{C}2 $}
\begin{align}  \norm{ u - u_h}_{L^2(I)} \leq Ch^{k},  \ k \geq 2 \label{eqn:thm2c}
\end{align}

\end{itemize}
where the integer $p$ is in the nonlinear term $f(u) = \frac{1}{p}u^p$. The constant $C$ depends on the final time $T$, $k$, $\norm{u}_{k+2}$ and the bounds of derivatives up to second order of the nonlinear term $f(u)$. Here, $\norm{u}_{k+2}$ is the maximum of the standard Sobolev $k+2$ norm over $[0, T]$.
\end{theorem}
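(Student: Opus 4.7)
The plan is to mirror the framework developed for Theorem \ref{thm1}, adapted to the coupled elliptic--evolution structure of \eqref{scheme:FW1DG}--\eqref{scheme:FW2DG}. First I choose projections compatible with the numerical fluxes: for Scheme $\mathcal{D}2$, the Gauss--Radau projection $\mathcal{P}^+$ for $u$ and $s$ (since $\widehat{u_h}=u_h^+$, $\widehat{s_h}=s_h^+$), $\mathcal{P}^-$ for $r$ and $p$ (since $\widehat{r_h}=r_h^-$, $\widehat{p_h}=p_h^-$), and the standard $L^2$ projection $\mathcal{P}$ for $w$ (which carries no flux); for Scheme $\mathcal{C}2$ I take $\mathcal{P}$ throughout. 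With the usual splitting $u-u_h=\eta^u-\xi^u$ and analogous definitions for $r,w,p,s$, subtracting the schemes from the identities satisfied by the exact solution produces error equations that I organize into a linear bilinear form and a nonlinear form parallel to the $\mathcal{B}_j$ and $\mathcal{H}_j$ of Section \ref{error_eqution1}.

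The key step is to differentiate \eqref{scheme:FW1DG} in time (so that $(\xi^r)_t$ becomes available as a test function, reflecting the appearance of $u_{xxt}$ in \eqref{eqn:FW1}) and then choose test functions echoing the proof of Proposition \ref{prop1}: in the time-differentiated elliptic error equation take $\phi=-\xi^p$ and $\phi=-(\xi^r)_t$, $\psi=(\xi^u)_t$ and $\psi=\xi^s$; in the evolution error equation take $\varphi=-\xi^p$ and $\varphi=-(\xi^r)_t$, $\vartheta=(\xi^u)_t$ and $\vartheta=\xi^s$, and $\sigma=-\xi^u$. Summing over cells and invoking Lemma \ref{lemma:properties_DN}, the same $\mathcal{L}$-operator pairings that produced cancellation in the stability proof now produce cancellation at the error level, leaving an identity of the schematic form
\begin{align*}
\tfrac{1}{2}\tfrac{d}{dt}\norm{\xi^u}_{L^2(I)}^2 + (\text{nonnegative jump-square terms}) + \mathcal{T}_1 + \mathcal{T}_2 = R_\eta,
\end{align*}
where $R_\eta$ collects only projection-error contributions and the jump-square terms vanish for Scheme $\mathcal{C}2$ because $\mathcal{L}^c(\omega,\omega)=0$.

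It then remains to bound $R_\eta$ and the nonlinear contribution $\mathcal{T}_1+\mathcal{T}_2$. Volume contributions in $R_\eta$ of the form $(\eta^{*},\cdot)_I$ are $O(h^{k+1})$ by \eqref{projection error}; interface contributions $\langle\widehat{\eta^{*}},\cdot\rangle$ vanish identically for Scheme $\mathcal{D}2$ thanks to the alternating-flux/Gauss--Radau pairing, while for Scheme $\mathcal{C}2$ they are bounded through the inverse inequality (ii) in \eqref{eqn:inverse inequality} combined with \eqref{projection error}, losing half an order. The nonlinear part is handled verbatim by Lemmas \ref{lemma:H_estimate2} and \ref{lemma:hc1}, yielding
\begin{align*}
\mathcal{T}_1 + \mathcal{T}_2 \leq C_\ast \norm{\xi^u}_{L^2(I)}^2 + C h^{2k+\mu} + C h^{-3/2}\norm{\xi^u}_{L^2(I)}^3,
\end{align*}
with $\mu=2$ or $1$ according to the parity of $p$ for $\mathcal{D}2$, and $2k$ in place of $2k+\mu$ for $\mathcal{C}2$. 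Young's inequality absorbs all cross terms, a differential inequality of the form \eqref{lemma:condition} results, and Lemma \ref{lemma} closes the loop to give the estimates on $\norm{\xi^u}_{L^2(I)}$; the triangle inequality combined with $\norm{\eta^u}_{L^2(I)}\leq C h^{k+1}$ then delivers \eqref{eqn:thm2d} and \eqref{eqn:thm2c}.

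The principal obstacle is bookkeeping rather than conceptual novelty: relative to Theorem \ref{thm1}, the analysis must coordinate two coupled first-order systems together with the time-differentiated elliptic subsystem, and the test-function choices in the two subsystems must be synchronized cell by cell so that $(\xi^r)_t$, $(\xi^u)_t$, $\xi^s$ and $\xi^p$ all appear consistently on both sides and allow the $\mathcal{L}$-pairings to collapse. Verifying that the alternating-flux/Gauss--Radau pairing in $\mathcal{D}2$ annihilates the interface-projection residues while the central-flux choice in $\mathcal{C}2$ does not is what accounts for the one-order gap between \eqref{eqn:thm2d} and \eqref{eqn:thm2c}, paralleling the gap between Schemes $\mathcal{D}1$ and $\mathcal{C}1$ already analyzed in Theorem \ref{thm1}.
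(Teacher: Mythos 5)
Your proposal matches the paper's own proof in all essentials: the same flux-compatible Gauss--Radau/$L^2$ projection choices, time-differentiation of the elliptic subsystem so that $\xi^r_t$ and $\xi^u_t$ are admissible test functions, test functions mirroring the stability proof of Proposition \ref{prop1} so the $\mathcal{L}$-pairings collapse into nonnegative jump squares (Lemma \ref{lemma:b2}), verbatim reuse of Lemmas \ref{lemma:H_estimate2} and \ref{lemma:hc1} for the nonlinear terms, and closure via Young's inequality, the inverse inequality, Lemma \ref{lemma}, and the triangle inequality. Apart from immaterial sign conventions (e.g.\ $\sigma=-\xi^u$ versus $\sigma=\xi^u$), this is the paper's argument.
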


\subsubsection{The error equation}
Combining the error equations with different test functions, we define the bilinear form ${\bar{\mathcal{B}}}_j$ as
\begin{align*}
&\bar{\mathcal{B}}_j(u-u_h, v-v_h, w-w_h, p-p_h, s-s_h; \bm{\phi}, \bm{\psi}, \bm{\varphi},\bm{\vartheta},\sigma) \label{eqn:B_bilinear2}  \\
& = ((u - u_h)_t,\bm{\phi})_{I_j} - ((w-w_h)_t , \bm{\phi})_{I_j} -  \mathcal{L}_j^-(r_t- \widehat{(r_h)}_t, \bm{\phi})
+((r - r_h)_t , \bm{\psi})_{I_j} - \mathcal{L}_j^+(u_t -\widehat{(u_h)}_t, \bm{\psi})\notag \\
&+  ((w -w_h)_t,\bm{\varphi})_{I_j}  + (s-s_h ,\bm{\varphi})_{I_j} - \mathcal{L}_j^-( p-\widehat{p_h}, \bm{\varphi})  + (p-p_h , \bm{\vartheta})_{I_j} - \mathcal{L}_j^+(s-\widehat{s_h}, \bm{\vartheta})  , \notag\\
& + (u-u_h,\bm{\vartheta})_{I_j} + (s-s_h , \sigma)_{I_j} -  <f(u) - \widehat{f(u_h)}, \sigma>_{I_j} + (f(u)-f(u_h), \sigma_x)_{I_j}.\notag
\end{align*}
 After applying summation over all cells $I_j$, the error equation is expressed by
\begin{align*}
\sum\limits_{j=1}^{N} \bar{\mathcal{B}}_j(u-u_h, r-r_h, p-p_h, s-s_h; \bm{\phi}, \bm{\psi}, \bm{\varphi},\bm{\vartheta},\sigma) = \sum\limits_{j=1}^{N}\mathcal{H}_j(f;u,u_h,\sigma).
\end{align*}
In the same way as Section \ref{error_eqution1}, we define
\begin{equation*} \label{errd2}
\begin{split}
&\xi^u = \mathcal{P}^+u-u_h, \ \eta^u = \mathcal{P}^+u - u, \ \xi^r = \mathcal{P}^-r-r_h,\ \eta^r = \mathcal{P}^-r - r,\\
&\xi^s = \mathcal{P}^+s-s_h, \ \eta^s = \mathcal{P}^+s - s, \ \xi^p = \mathcal{P}^-p-p_h,\ \eta^p = \mathcal{P}^-p - p,
\end{split}
\end{equation*}
for the dissipative scheme $\mathcal{D}2$. The Gauss-Radau projections are changed into the standard $L^2$ projections for
the conservative scheme $\mathcal{C}2$
\begin{equation*}
\begin{split}
&\xi^u = \mathcal{P}u-u_h, \ \eta^u = \mathcal{P}u - u, \ \xi^r = \mathcal{P}r-r_h,\ \eta^r = \mathcal{P}r - r,\\
&\xi^s = \mathcal{P}s-s_h, \ \eta^s = \mathcal{P}s - s, \ \xi^p = \mathcal{P}p-p_h,\ \eta^p = \mathcal{P}p - p,
\end{split}
\end{equation*}

Taking test functions
\begin{equation*}
\begin{split}
 \sigma = \xi^u, \ &\bm{\phi} = \bm{\varphi} = \bm{\xi^3}\triangleq (-\xi^p, -\xi^r_t,\xi^u_t,\xi^s), \ \bm{\vartheta} = \bm{\psi} =\bm{\xi^4}\triangleq (\xi^p, \xi^r_t,\xi^u_t,\xi^s),
 \end{split}
\end{equation*}
  we have the energy equality as
\begin{align*}
&\sum\limits_{j=1}^{N} \bar{\mathcal{B}}_j(\xi^u- \eta^u, \xi^r- \eta^r, \xi^p- \eta^p,\xi^s- \eta^s ;  \bm{\xi^3} ,\bm{\xi^4},\bm{\xi^3},\bm{\xi^4},\xi^u)=\sum\limits_{j=1}^{N}\mathcal{H}_j(f;u,u_h,\xi^u).
\end{align*}

\subsubsection{The proof of the main results in Theorem \ref{thm2}}

The relevant estimates for $\mathcal{H}_j$ have already been
stated in Section \ref{main_results1}. We just give the estimate results for $\bar{\mathcal{B}}_j$.
{
\begin{itemize}
\item {\bf Estimates for the linear terms}
\end{itemize}}
\begin{lemma}\label{lemma:another_energy2}
The following energy equality holds,
\begin{equation}\label{eqn:another_energy2}
 \norm{s_h + (u_h)_t }^2_{L^2(I)} + \norm{p_h + (r_h)_t}^2_{L^2(I)} + (u_h, p_h + (r_h)_t)_I = 0 .
\end{equation}
\end{lemma}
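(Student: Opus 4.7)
The plan is to mimic the proof of Lemma \ref{lemma:another_energy} with the substitutions $v_h \leftrightarrow (u_h)_t + s_h$ and $q_h \leftrightarrow (r_h)_t + p_h$, which are the ``paired'' auxiliary variables produced by combining the two subsystems of Scheme $\mathcal{D}2$. The substitution is natural because the dissipative flux choices in \eqref{fluxD1:lin} pair up by type: both $\widehat{r_h} = r_h^-$ and $\widehat{p_h} = p_h^-$ feed into $\mathcal{L}^-$ operators, while both $\widehat{u_h} = u_h^+$ and $\widehat{s_h} = s_h^+$ feed into $\mathcal{L}^+$ operators, so the same duality used in the proof of Lemma \ref{lemma:another_energy} will apply.

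First I would rewrite \eqref{scheme:FW1DG1_t} and \eqref{scheme:FW2DG1} in compact operator form as
\begin{align*}
((u_h)_t, \phi)_{I_j} - \mathcal{L}^-_j((r_h)_t, \phi) &= ((w_h)_t, \phi)_{I_j}, \\
((w_h)_t, \varphi)_{I_j} + (s_h, \varphi)_{I_j} &= \mathcal{L}^-_j(p_h, \varphi).
\end{align*}
Choosing a common test function $\phi = \varphi$ and summing over all cells, the $(w_h)_t$ contribution cancels and one obtains
\begin{align*}
((u_h)_t + s_h, \phi)_I = \mathcal{L}^-((r_h)_t + p_h, \phi) \qquad \forall \phi \in V_h^k.
\end{align*}
The parallel manipulation of \eqref{scheme:FW1DG2_t} and \eqref{scheme:FW2DG2} with a common $\psi = \vartheta$ delivers
\begin{align*}
((r_h)_t + p_h, \psi)_I = \mathcal{L}^+((u_h)_t + s_h, \psi) - (u_h, \psi)_I \qquad \forall \psi \in V_h^k.
\end{align*}

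Then I would substitute $\phi = (u_h)_t + s_h$ into the first identity and $\psi = (r_h)_t + p_h$ into the second, producing
\begin{align*}
\norm{(u_h)_t + s_h}^2_{L^2(I)} &= \mathcal{L}^-((r_h)_t + p_h, (u_h)_t + s_h), \\
\norm{(r_h)_t + p_h}^2_{L^2(I)} &= \mathcal{L}^+((u_h)_t + s_h, (r_h)_t + p_h) - (u_h, (r_h)_t + p_h)_I.
\end{align*}
The duality relation $\mathcal{L}^+(\omega,\phi) + \mathcal{L}^-(\phi,\omega) = 0$ from Lemma \ref{lemma:properties_DN} converts the $\mathcal{L}^+$ term in the second line into $-\mathcal{L}^-((r_h)_t + p_h, (u_h)_t + s_h) = -\norm{(u_h)_t + s_h}^2_{L^2(I)}$ via the first line. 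Adding the two identities then produces exactly \eqref{eqn:another_energy2}. There is no real obstacle: once the correct pairing of equations by flux type is recognized, the argument is pure bookkeeping and is structurally identical to the proof of Lemma \ref{lemma:another_energy}, with the extra $-(u_h, \psi)_I$ term from \eqref{scheme:FW2DG2} (originating from the $-u$ coupling in \eqref{FW1Ode2}) supplying the inner product $(u_h, p_h + (r_h)_t)_I$ that appears in the final energy equality.
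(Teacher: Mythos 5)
Your proposal is correct and follows essentially the same route as the paper's proof in Appendix \ref{proofanother_energy2}: the paper takes the eight test-function choices $\varphi=\phi=(u_h)_t$ and $s_h$, $\psi=\vartheta=(r_h)_t$ and $p_h$, sums over all intervals, and cancels the $\mathcal{L}^{\pm}$ cross terms via the duality property of Lemma \ref{lemma:properties_DN}. Your pairwise combination of the subsystems followed by the substitutions $\phi=(u_h)_t+s_h$ and $\psi=(r_h)_t+p_h$ reproduces exactly that computation by bilinearity, just with tidier bookkeeping.
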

\begin{proof}The proof is provided in Appendix \ref{proofanother_energy2}.
\end{proof}

\begin{lemma}\label{lemma:b2}
For the bilinear forms $\bar{\mathcal{B}}_j$, the following equations hold by projection properties
\begin{itemize}
\item   Scheme  $\mathcal{D}2$
\begin{align}
&\sum\limits_{j=1}^{N} \bar{\mathcal{B}}_j(\xi^u- \eta^u, \xi^r- \eta^r, \xi^p- \eta^p,\xi^s- \eta^s ; \bm{\xi^3} ,\bm{\xi^4},\bm{\xi^3},\bm{\xi^4},\xi^u) \notag \\
& = (\xi_t^u,\xi^u)_I  + \norm{\xi^s + \xi^u_t}^2_{L^2(I)} + \norm{\xi^p + \xi^r_t}^2_{L^2(I)} + \sum\limits_{j=1}^{N} \frac{1}{2}(\jump{\xi^p} + \jump{\xi^r_t})^2_{j+\frac{1}{2}}  \notag  \\
 &\quad \ + \sum\limits_{j=1}^{N}\frac{1}{2}(\jump{\xi^s} + \jump{\xi^u_t})^2_{j+\frac{1}{2}} +(\xi^u, \xi^p + \xi^r_t)_I - (\eta^u + \eta^p + \eta^s + \eta_t^u + \eta^r_t,\xi^u_t + \xi^s)_I  \notag \\
 &\quad \ -(\eta^u +\eta^p -\eta^s - \eta^u_t + \eta_t^r  ,\xi^r_t +\xi^p)_I - (\eta^s,\xi^u)_I ; \label{eqn:B_estimate22d}
\end{align}
\item   Scheme  $\mathcal{C}2$
\begin{align}
&\sum\limits_{j=1}^{N} \bar{\mathcal{B}}_j(\xi^u- \eta^u, \xi^r- \eta^r, \xi^p- \eta^p,\xi^s- \eta^s ; \bm{\xi^3} ,\bm{\xi^4},\bm{\xi^3},\bm{\xi^4},\xi^u) \notag \\
& = ({\xi}_t^u,{\xi}^u)_I  + \norm{{\xi}^s + {\xi}^u_t}^2_{L^2(I)} + \norm{{\xi}^p + {\xi}^r_t}^2_{L^2(I)}
- \sum\limits_{j=1}^{N} <\widehat{{\eta}^r_t} + \widehat{{\eta}^p} - \widehat{{\eta}^s} - \widehat{{\eta}^u_t},{\xi}^r_t + {\xi}^p>_{I_j}  \notag \\
&\quad \ +({\xi}^u, {\xi}^p + {\xi}^r_t)_I + \sum\limits_{j=1}^{N} <\widehat{{\eta}^r_t} +\widehat{{\eta}^p} +\widehat{{\eta}^s} + \widehat{{\eta}^u_t} ,{\xi}^u_t +{\xi}^s>_{I_j}.\label{eqn:B_estimate22c}
\end{align}
\end{itemize}
\end{lemma}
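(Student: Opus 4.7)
The plan is to expand $\sum_{j=1}^{N} \bar{\mathcal{B}}_j$ term by term, split each error $(\zeta - \zeta_h) = \xi^\zeta - \eta^\zeta$ by linearity of the inner product and of each operator $\mathcal{L}^{\pm,c}_j$, and then use the structure of the chosen test functions to collapse the expression into the stated energy identity. Throughout I would treat $\xi^u_t$ and $\xi^r_t$ as bona fide elements of $V_h^k$, so that the identities of Lemma \ref{lemma:properties_DN} apply to them verbatim.

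For Scheme $\mathcal{D}2$, the Gauss--Radau projections are pairwise matched with the upwind fluxes $\widehat{u_h}=u_h^+$, $\widehat{r_h}=r_h^-$, $\widehat{s_h}=s_h^+$, $\widehat{p_h}=p_h^-$, so that $\mathcal{L}^+_j(\eta^u,\cdot)=\mathcal{L}^-_j(\eta^r,\cdot)=\mathcal{L}^+_j(\eta^s,\cdot)=\mathcal{L}^-_j(\eta^p,\cdot)=0$: the interior $-(\eta,\cdot_x)_{I_j}$ piece vanishes by $L^2$-orthogonality against $P^{k-1}(I_j)$, and the boundary piece vanishes at the endpoint where $\mathcal{P}^{\pm}$ is exact. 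All $\eta$ contributions inside the $\mathcal{L}$-operators of $\bar{\mathcal{B}}_j$ therefore disappear, and the only surviving $\eta$ terms come from the volume inner products $(\cdot,\bm{\phi})_{I_j}$, $(\cdot,\bm{\psi})_{I_j}$, $(\cdot,\bm{\varphi})_{I_j}$, $(\cdot,\bm{\vartheta})_{I_j}$, $(\cdot,\sigma)_{I_j}$, which collect exactly into the projection-error remainders on the right-hand side of \eqref{eqn:B_estimate22d}.

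For the $\xi$ part I would apply Lemma \ref{lemma:properties_DN} systematically. The assignments $\bm{\phi}=\bm{\varphi}=\bm{\xi^3}$ and $\bm{\psi}=\bm{\vartheta}=\bm{\xi^4}$ are engineered so that each operator pair such as $\mathcal{L}^-_j(\xi^r_t,-\xi^p)$ is balanced by $\mathcal{L}^-_j(\xi^p,-\xi^r_t)$, and their sum combines via $\mathcal{L}^-(\omega,\phi)+\mathcal{L}^-(\phi,\omega)=\sum\jump{\omega}\jump{\phi}$ together with the diagonal $\mathcal{L}^\pm(\omega,\omega)$ contributions to complete the squares $\tfrac{1}{2}(\jump{\xi^p}+\jump{\xi^r_t})^2$ and $\tfrac{1}{2}(\jump{\xi^s}+\jump{\xi^u_t})^2$. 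At the volume level, $\norm{\xi^s}^2_{L^2(I)}+\norm{\xi^u_t}^2_{L^2(I)}+2(\xi^s,\xi^u_t)_I$ and $\norm{\xi^p}^2_{L^2(I)}+\norm{\xi^r_t}^2_{L^2(I)}+2(\xi^p,\xi^r_t)_I$ coalesce into the squared norms $\norm{\xi^s+\xi^u_t}^2_{L^2(I)}$ and $\norm{\xi^p+\xi^r_t}^2_{L^2(I)}$; the cross term $(\xi^u,\xi^p+\xi^r_t)_I$ emerges from the $(u-u_h,\bm{\vartheta})$ entry paired with the $\xi^p$ and $\xi^r_t$ components of $\bm{\vartheta}$, mirroring exactly the energy structure of Lemma \ref{lemma:another_energy2}.

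Scheme $\mathcal{C}2$ is handled identically except that central fluxes and the standard $L^2$ projection are used. The interior orthogonality $(\eta,\phi_x)_{I_j}=0$ still kills every volume $\eta$-contribution from the $\mathcal{L}^c$ operators, while the identity $\mathcal{L}^c(\omega,\phi)+\mathcal{L}^c(\phi,\omega)=0$ annihilates all jump-squared contributions, which explains why \eqref{eqn:B_estimate22c} carries no $\jump{\cdot}^2$ terms. In return, the boundary piece of each $\mathcal{L}^c_j(\eta,\cdot)$ no longer vanishes and aggregates across the operator appearances into the residual boundary sums $<\widehat{\eta^r_t}+\widehat{\eta^p}\pm\widehat{\eta^s}\pm\widehat{\eta^u_t},\cdot>_{I_j}$ displayed on the right-hand side. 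The chief obstacle in both cases is purely organisational: with four scalar unknowns, five vector-valued test-function slots, and a mix of $\mathcal{L}^{+}$, $\mathcal{L}^{-}$ and $\mathcal{L}^c$ operators, one must carefully track signs and pairings so that the correct identity from Lemma \ref{lemma:properties_DN} is applied to each pair, while keeping the leftover $\eta$-$\xi$ volume inner products grouped into exactly the forms that appear on the right-hand sides of \eqref{eqn:B_estimate22d} and \eqref{eqn:B_estimate22c}.
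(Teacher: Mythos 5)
Your proposal matches the paper's own proof in both structure and substance: you split $\bar{\mathcal{B}}_j$ into its $\xi$- and $\eta$-parts by linearity, recover the $\xi$-part by rerunning the $L^2$-stability computation together with the energy identity of Lemma \ref{lemma:another_energy2} (completing the squares via the jump identities of Lemma \ref{lemma:properties_DN}), and eliminate the $\eta$-contributions inside the $\mathcal{L}$-operators through the matching of Gauss--Radau projections with upwind fluxes for Scheme $\mathcal{D}2$ (resp.\ $L^2$-orthogonality with central fluxes for Scheme $\mathcal{C}2$), leaving exactly the volume (resp.\ boundary) remainders on the right-hand side. Your account is in fact more explicit than the paper's brief argument about which flux pairs with which projection and why the jump-squared terms disappear in the conservative case.
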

\begin{proof}The proof is provided in Appendix \ref{proofb2}.
\end{proof}
{
\begin{itemize}
\item  {\bf Final error estimates in Theorem \ref{thm2}}
\end{itemize}}

After the above preparations, the proof of Theorem \ref{thm2} can be provided. We still pay our attention to the approximation error $\xi^u$. After the triangle inequality  and  interpolation property \eqref{projection error}, we complete the proof for Theorem \ref{thm2}.
\begin{itemize}
\item [] \textbf{Estimates in \eqref{eqn:thm2d}}:
\end{itemize}
Combining \eqref{eqn:B_estimate22d} and \eqref{eqn:H_estimate2_podd}, the error estimate for the
dissipative scheme $\mathcal{D}2$ is given by,
\begin{align*}
&(\xi^u_t, \xi^u)_{I} + \norm{\xi^s + \xi^u_t}^2_{L^2(I)} + \norm{\xi^p + \xi^r_t}^2_{L^2(I)} \leq
-(\xi^u, \xi^p + \xi^r_t)_I + Ch^{2k+\mu} + C_*\norm{\xi^u}^2_{L^2(I)}\\
 &+ (\eta^u + \eta^p + \eta^s + \eta_t^u + \eta^r_t,\xi^u_t + \xi^s)_I +(\eta^u +\eta^p -\eta^s  - \eta^u_t + \eta_t^r ,\xi^r_t +\xi^p)_I + (\eta^s,\xi^u)_I \\
& + C_*\norm{\xi^u}^2_{L^2(I)} + Ch^{2k+\mu}  + Ch^{-\frac{3}{2}}\norm{\xi^u}^3_{L^2(I)}
\end{align*}
where $\mu = 1$ for even $p$,  and $\mu = 2$ for odd $p$.
\begin{itemize}
\item [] \textbf{Estimates in \eqref{eqn:thm2c}}:
\end{itemize}
Together with the estimates \eqref{eqn:B_estimate22c} and \eqref{eqn:hc1},  we have the following inequality for the
conservative scheme $\mathcal{C}2$,
\begin{align*}
& ({\xi}_t^u,{\xi}^u)_I  + \norm{{\xi}^s + {\xi}^u_t}^2_{L^2(I)} + \norm{{\xi}^p + {\xi}^r_t}^2_{L^2(I)} \notag \\
&\leq -({\xi}^u, {\xi}^p + {\xi}^r_t)_I \\
&+ \sum\limits_{j=1}^{N}\big( <\widehat{{\eta}^r_t} + \widehat{{\eta}^p} - \widehat{{\eta}^s} - \widehat{{\eta}^u_t},{\xi}^r_t + {\xi}^p>_{I_j} - <\widehat{{\eta}^r_t} +\widehat{{\eta}^p} +\widehat{{\eta}^s} + \widehat{{\eta}^u_t} ,{\xi}^u_t +{\xi}^s>_{I_j}\big) \notag\\
&+ C_*\norm{\xi^u}^2_{L^2(I)} + Ch^{2k} + Ch^{-\frac{3}{2}}\norm{\xi^u}^3_{L^2(I)}.
\end{align*}

The Young's inequality and inverse inequality (ii) in \eqref{eqn:inverse inequality}  imply
{
\begin{align}
 \frac{1}{2}\frac{d}{dt}\norm{\xi^u}^2_{L^2(I)}   \leq C_*\norm{\xi^u}^2_{L^2{(I)}} + Ch^{2k+\tilde{\mu}}+
Ch^{-\frac{3}{2}}\norm{\xi^u}^3_{L^2(I)} ,
\end{align}
}
where $\tilde{\mu} = \mu$ for Scheme $\mathcal{D}2$, and $\tilde{\mu} = 0$ for Scheme $\mathcal{C}2$.  Through Lemma 2.9,  we can
straightforwardly obtain the results of Theorem \ref{thm2}.


\section{Numerical experiments}\label{experiments}
In this section, we present several numerical experiments to illustrate the capability of our numerical schemes, including the errors, convergence rate tables, and some plots of numerical solutions. For the time discretization, we adopt the explicit third-order methods in \cite{1988_Shu_JCP, Gottlieb_2001_SIAM}. {Under the situation of degree $k$ $(k\geq 2)$ polynomial approximation space, we take the time step as $\Delta t = \alpha{\Delta x}^{(k+1)/3}, \alpha = 0.1$ to unify orders of temporal and spatial discretization.} Without a specific explanation, the periodic boundary condition and uniform meshes are used. {All examples were performed on a Windows desktop system using an Intel Core i5 processor, and programmed in Intel Visual Fortran.}

\begin{example} \label{ex:smooth}$\bf{ Smooth\ solution}$

For the Fornberg-Whitham type equation with $p=3$, we test the accuracy of our numerical schemes by a tectonic smooth traveling solution
\begin{align}\label{smooth}
u(x,t) = \sin(x-t), \ x\in [0,2\pi].
\end{align}
Here a source term is needed to add in equation \eqref{eqn:GFW} to make sure the equation holds. The $L^2$ and $L^{\infty}$
errors and orders of accuracy for the four LDG schemes are contained in Tables \ref{tab: acc_test1} and \ref{tab: acc_test2},
respectively. The $L^2$ and $L^{\infty}$ errors for the dissipative schemes $\mathcal{D}1$ and $\mathcal{D}2$ are similar, so
do the conservative schemes. The convergence rates of both norms for the dissipative schemes are $(k+1)$-$th$ order which are
both optimal. Notably, these results are identical to the theoretical proof. Even for even $p$, the numerical tests still
show the optimal error order of accuracy.
Owing to the choices of the central fluxes, the conservative schemes $\mathcal{C}1$ and $\mathcal{C}2$
have $(k+1)$-$th$ order of accuracy for even $k$, and only $k$-$th$ order of accuracy for odd $k$. All above conclusion is
on the basis of uniform meshes. {A slight perturbation of uniform meshes does
not affect the error order of accuracy.}

\begin{table}[!htp]
\centering
\begin{tabular}{|c|c|cccc|cccc|}
  \hline
           & N       &$ \norm{u-u_h}_{L^2}$    & order   &$ \norm{u-u_h}_\infty$   & order
                     &$ \norm{u-u_h}_{L^2}$    & order   &$ \norm{u-u_h}_\infty$   & order\\\hline
          &           &  \multicolumn{4}{c|}{Scheme $\mathcal{D}1$} & \multicolumn{4}{c|}{Scheme $\mathcal{C}1$}\\\hline

$P^2$   &  20  &4.23E-05 &-- &1.46E-04 &--
&3.49E-05 &-- &1.24E-04 &--\\

&  40  &6.24E-06 &2.76 &2.60E-05 &2.49
&4.26E-06 &3.03 &1.57E-05 &2.98\\

&  80  &8.54E-07 &2.87 &4.93E-06 &2.40
&5.30E-07 &3.01 &1.98E-06 &2.99\\

& 160  &1.12E-07 &2.93 &7.09E-07 &2.80
&6.61E-08 &3.00 &2.49E-07 &2.99\\\hline

$P^3$ &  20  &9.68E-07 &-- &4.08E-06 &--
&2.68E-06 &-- &1.44E-05 &--\\

&  40  &7.29E-08 &3.73 &3.28E-07 &3.64
&3.41E-07 &2.97 &1.57E-06 &3.19\\

&  80  &4.18E-09 &4.13 &1.91E-08 &4.11
&4.28E-08 &3.00 &2.16E-07 &2.86\\

& 160  &2.71E-10 &3.95 &1.21E-09 &3.98
&5.47E-09 &2.97 &2.61E-08 &3.05 \\\hline

\end{tabular}
\caption{\label{tab: acc_test1} Example \ref{ex:smooth}, accuracy test for smooth solution \eqref{smooth} at $T = 0.1$. }
\end{table}

\begin{table}[!htp]
\centering
\begin{tabular}{|c|c|cccc|cccc|}
  \hline
           & N       &$ \norm{u-u_h}_{L^2}$    & order   &$ \norm{u-u_h}_\infty$   & order
                     &$ \norm{u-u_h}_{L^2}$    & order   &$ \norm{u-u_h}_\infty$   & order\\\hline
          &           &  \multicolumn{4}{c|}{Scheme $\mathcal{D}2$ } & \multicolumn{4}{c|}{Scheme $\mathcal{C}2$}\\\hline

$P^2$   &  20  &4.83E-05 &-- &2.07E-04 &--
&3.66E-05 &-- &1.57E-04 &--\\

&  40  &6.78E-06 &2.83 &3.45E-05 &2.58
       &4.16E-06 &3.14 &1.45E-05 &3.44\\

&  80  &8.80E-07 &2.95 &5.47E-06 &2.66
&5.22E-07 &2.99 &1.79E-06 &3.02\\

& 160  &1.13E-07 &2.96 &7.38E-07 &2.89
&6.57E-08 &2.99 &2.36E-07 &2.92\\\hline

$P^3$&  20  &1.40E-06 &-- &5.74E-06 &--
        &3.43E-06 &-- &1.43E-05 &-- \\

&  40  &9.32E-08 &3.91 &4.00E-07 &3.84
&4.20E-07 &3.03 &1.99E-06 &2.85\\

&  80  &5.21E-09 &4.16 &2.29E-08 &4.13
&5.30E-08 &2.99 &2.33E-07 &3.09\\

& 160  &3.37E-10 &3.95 &1.43E-09 &4.00
&6.68E-09 &2.99 &2.89E-08 &3.01\\\hline

\end{tabular}
\caption{\label{tab: acc_test2} Example \ref{ex:smooth}, accuracy test for smooth solution \eqref{smooth} at $T = 0.1$. }
\end{table}

\end{example}

\begin{example}\label{ex:shock} $\bf{ Shock\ solutions}$

This example is devoted to test two shock solutions. The initial datums are given as follows
\begin{align}
&\label{data1}\text{data 1:} \ u(x,0) =  \cos(2\pi x + 0.5) + 1 \\
&\label{data2}\text{data 2:}  \ u(x,0) =  0.2\cos(2\pi x) + 0.1\cos(4\pi x) - 0.3\sin(6\pi x) + 0.5
\end{align}
where the computational domain is $[0,1]$. However, there is no exact solution for these two initial datums. {Compared with the results in \cite{Hormann_2018_arxiv}, our dissipative scheme $\mathcal{D}1$ and $\mathcal{D}2$ with a TVB limiter \cite{1989_Cockburn_JCP} can capture the shock without oscillation, see Figure \ref{fig:shock2}.} We further extend to the situation with parameter $p=4$ in Figure \ref{fig:shock1p4}. Because of the lack of dissipation for the nonlinear term $f(u)$, the conservative schemes fail to model these shock solutions.


\begin{figure}[!htp] \centering
		\subfigure[$t=0.1$] {
			\includegraphics[width=0.45\columnwidth]{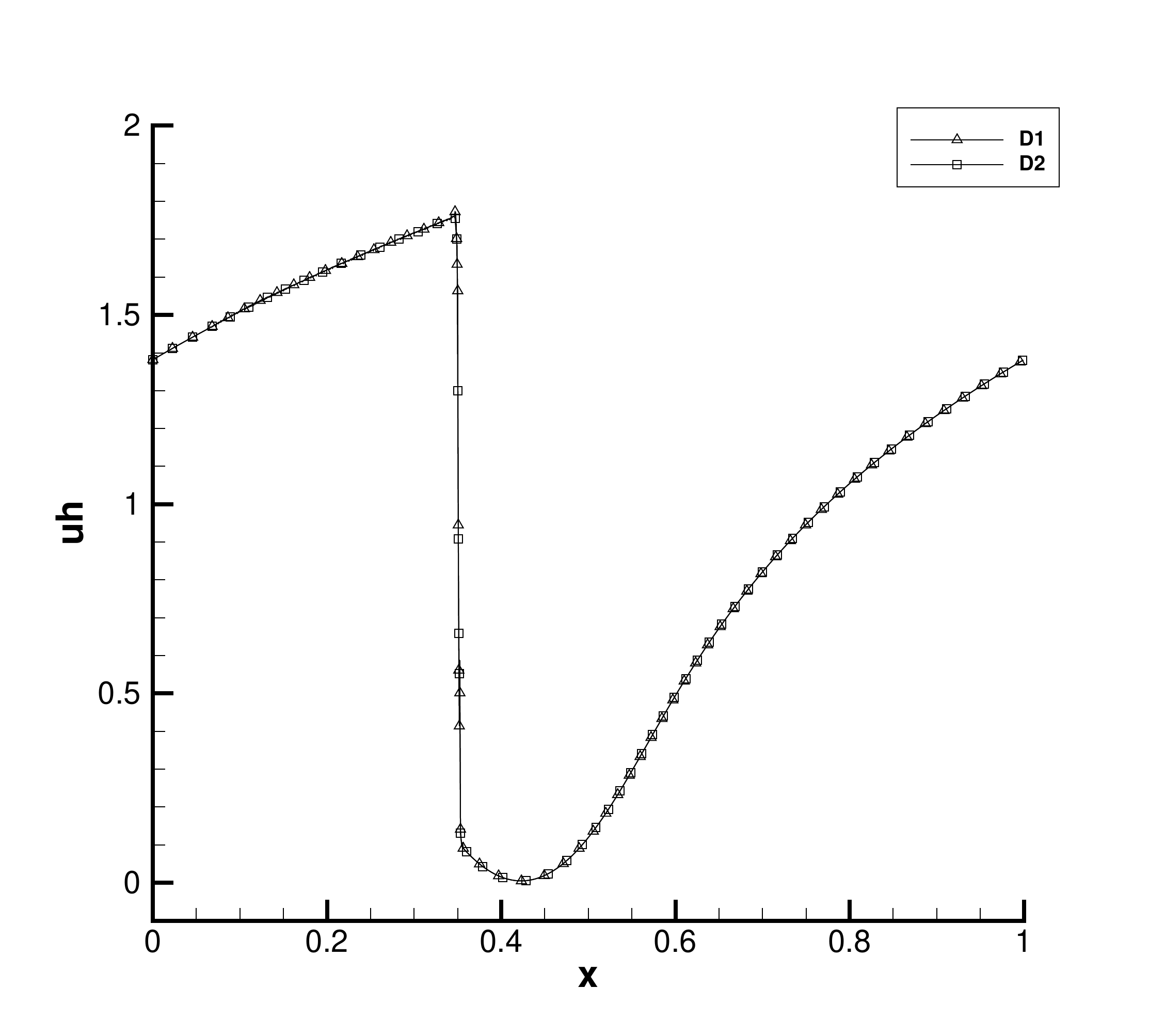}
		}
		\subfigure[$t=0.2$] {
			\includegraphics[width=0.45\columnwidth]{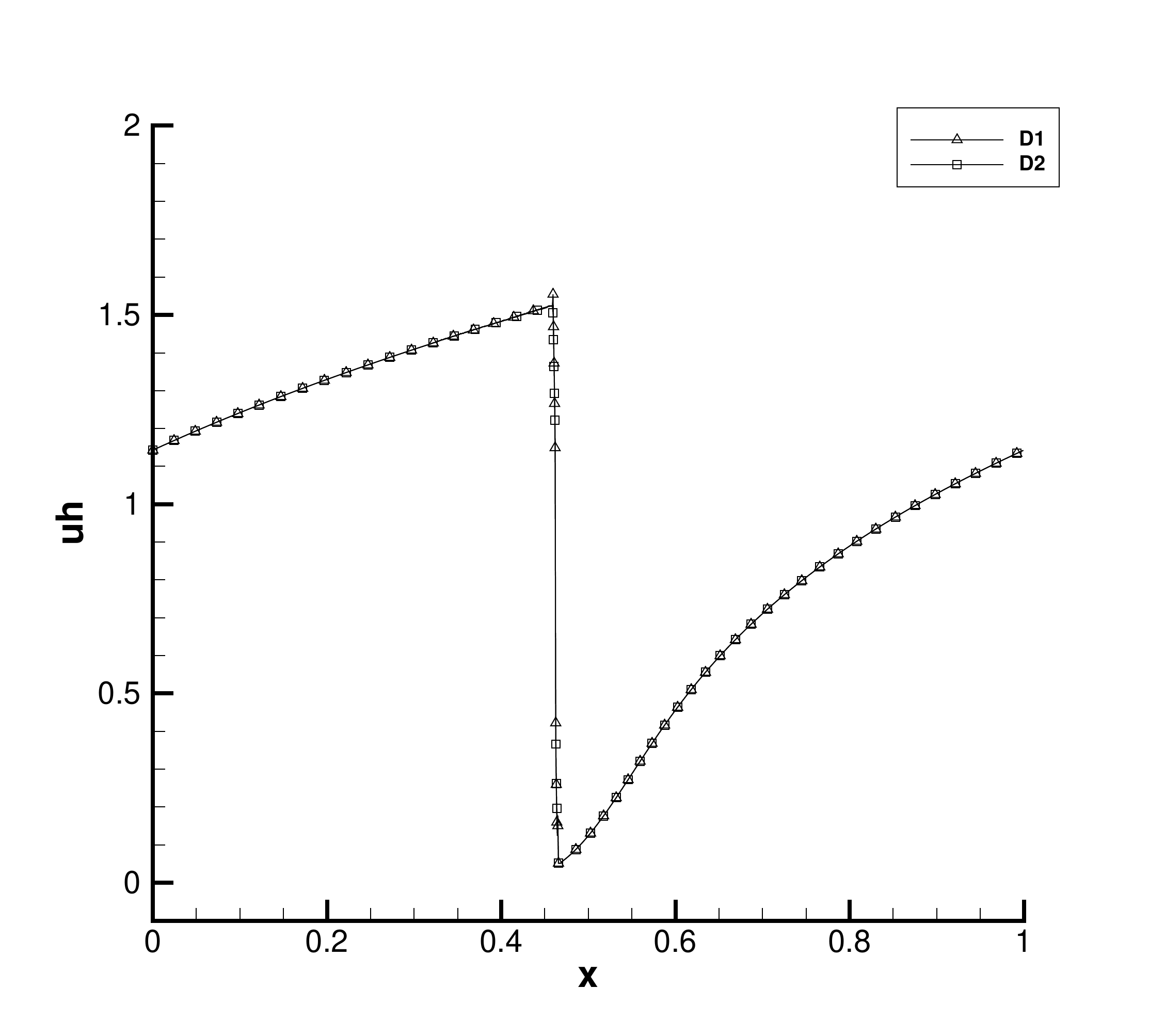}
		}
		\subfigure[$t=0.3$] {
			\includegraphics[width=0.45\columnwidth]{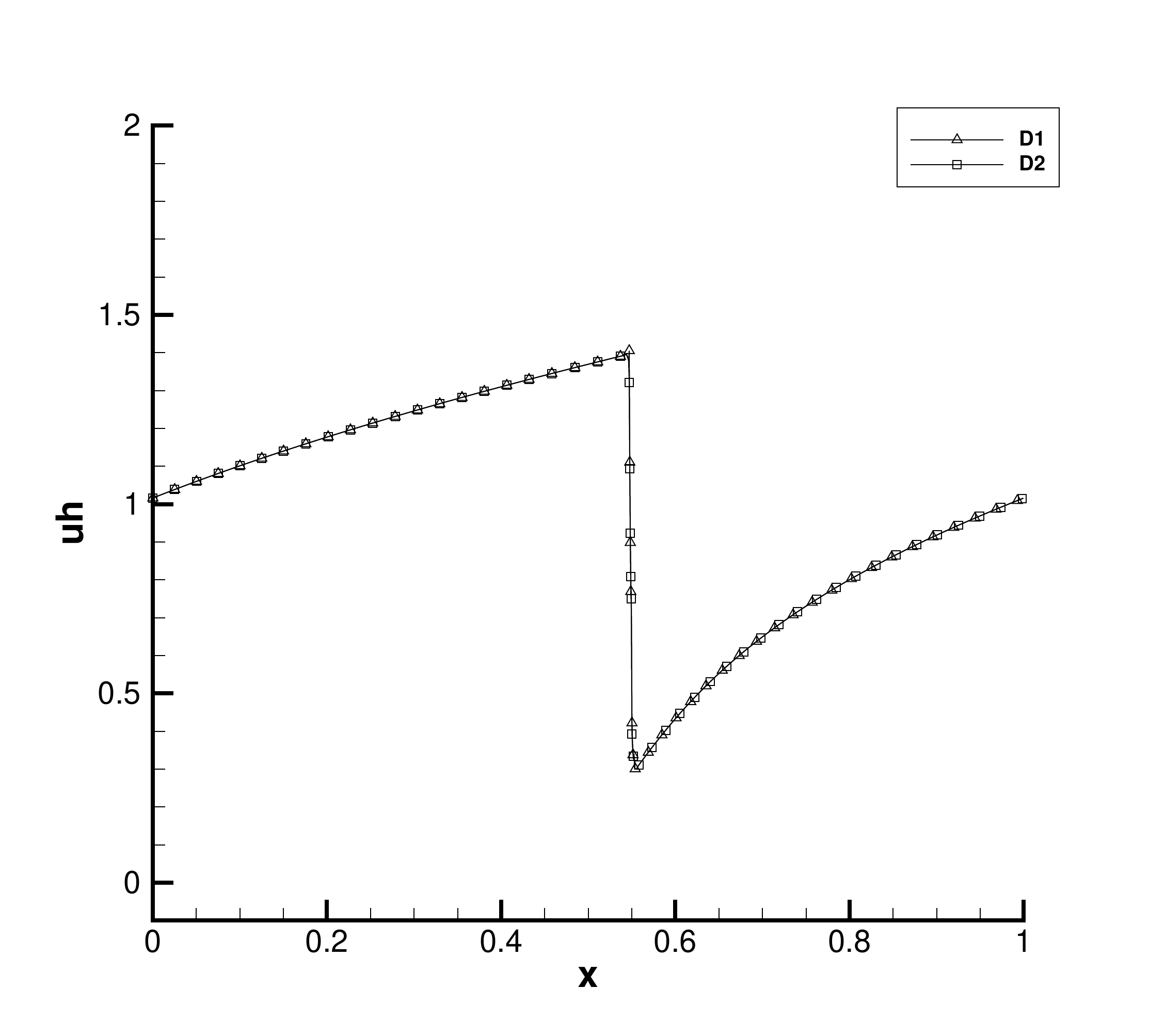}
		}
		\subfigure[$t=0.4$] {
			\includegraphics[width=0.45\columnwidth]{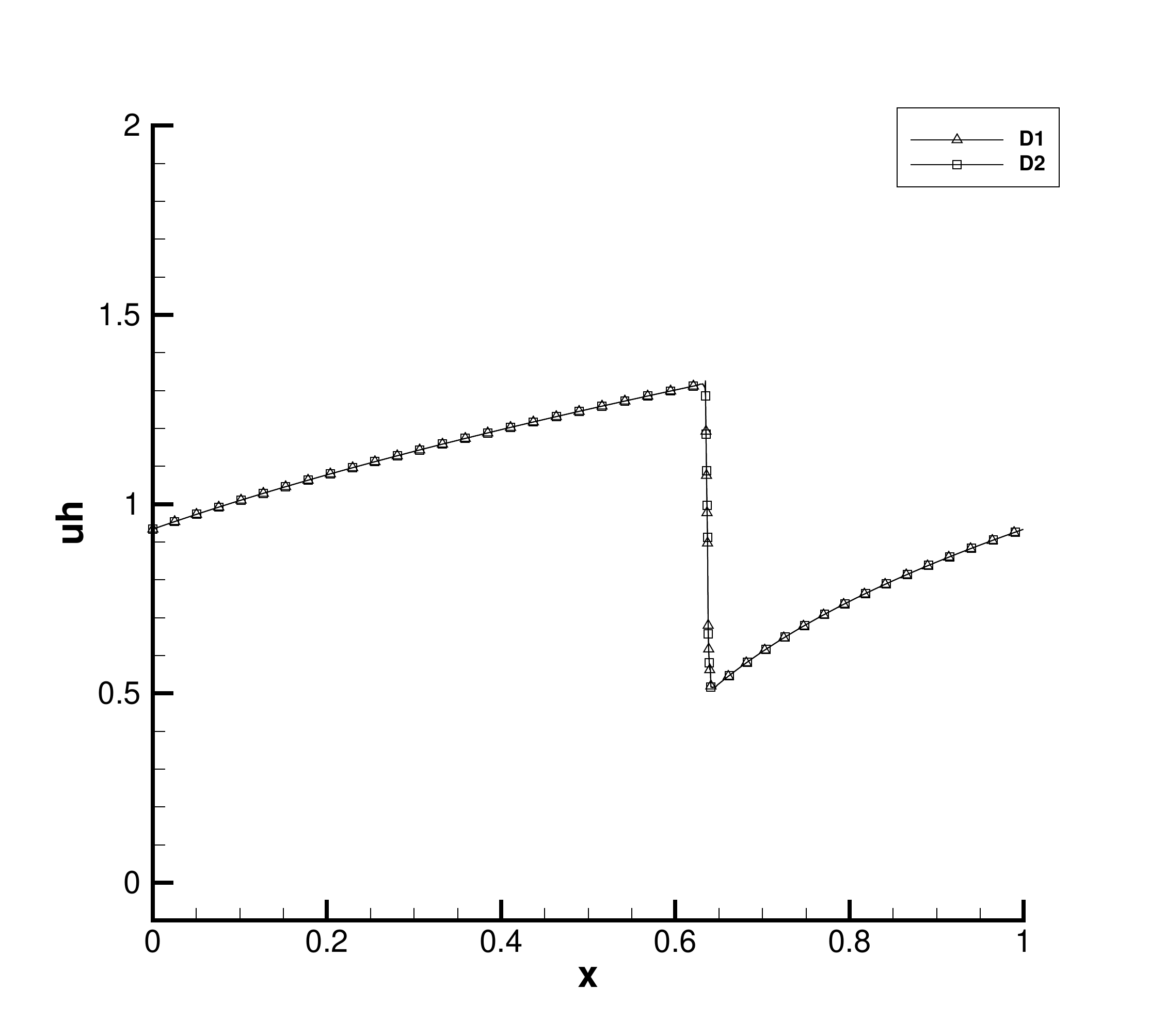}
		}
		\caption{  Example \ref{ex:shock}, different terminal time for initial data 1 \eqref{data1} in computational domain $[0,1]$, $p =4, N = 160, P^2$ elements. }
		\label{fig:shock1p4}
	\end{figure}

\begin{figure}[!htp] \centering
		\subfigure[$t=0.0$] {
			\includegraphics[width=0.45\columnwidth]{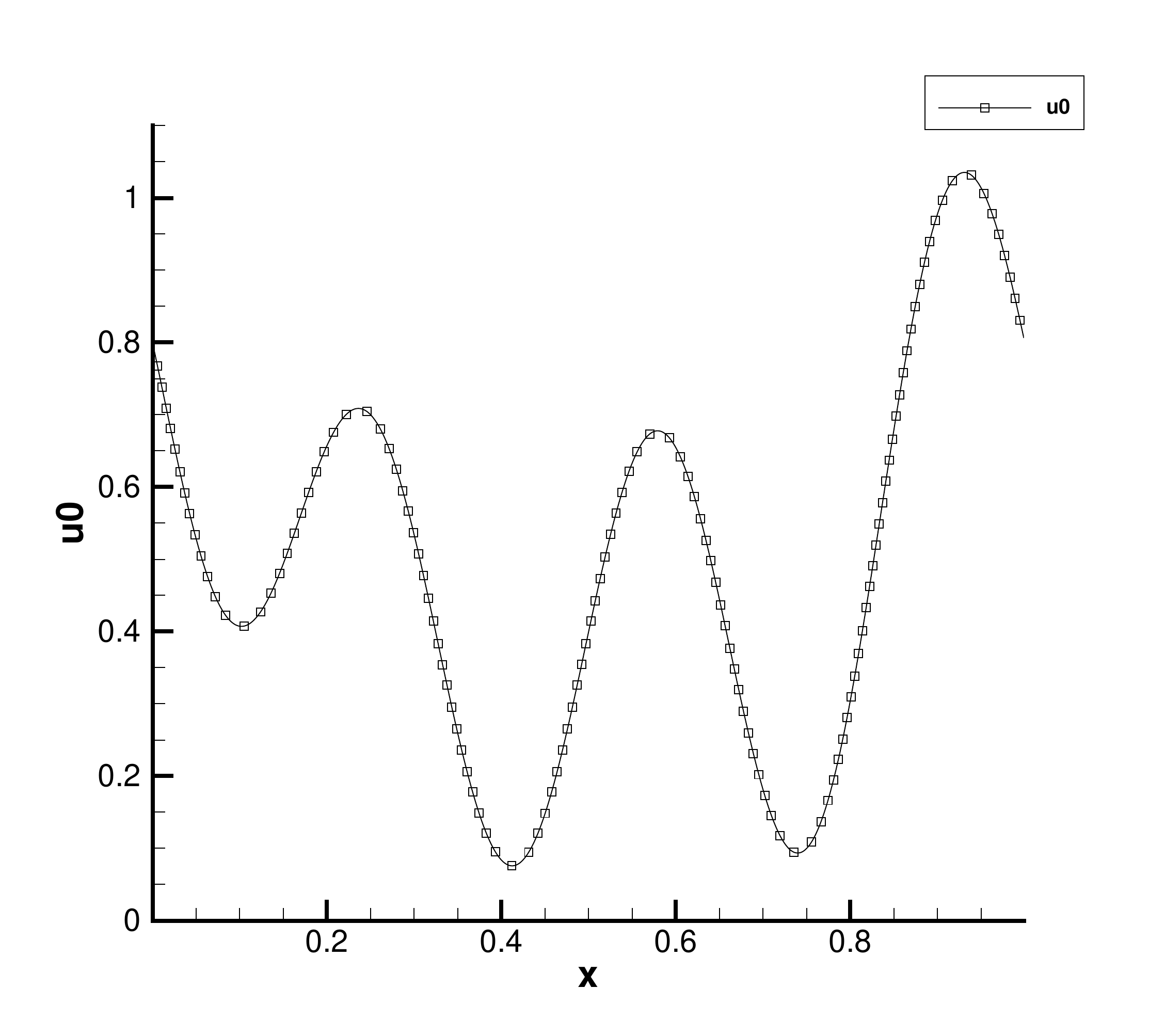}
		}
		\subfigure[$t=0.2$] {
			\includegraphics[width=0.45\columnwidth]{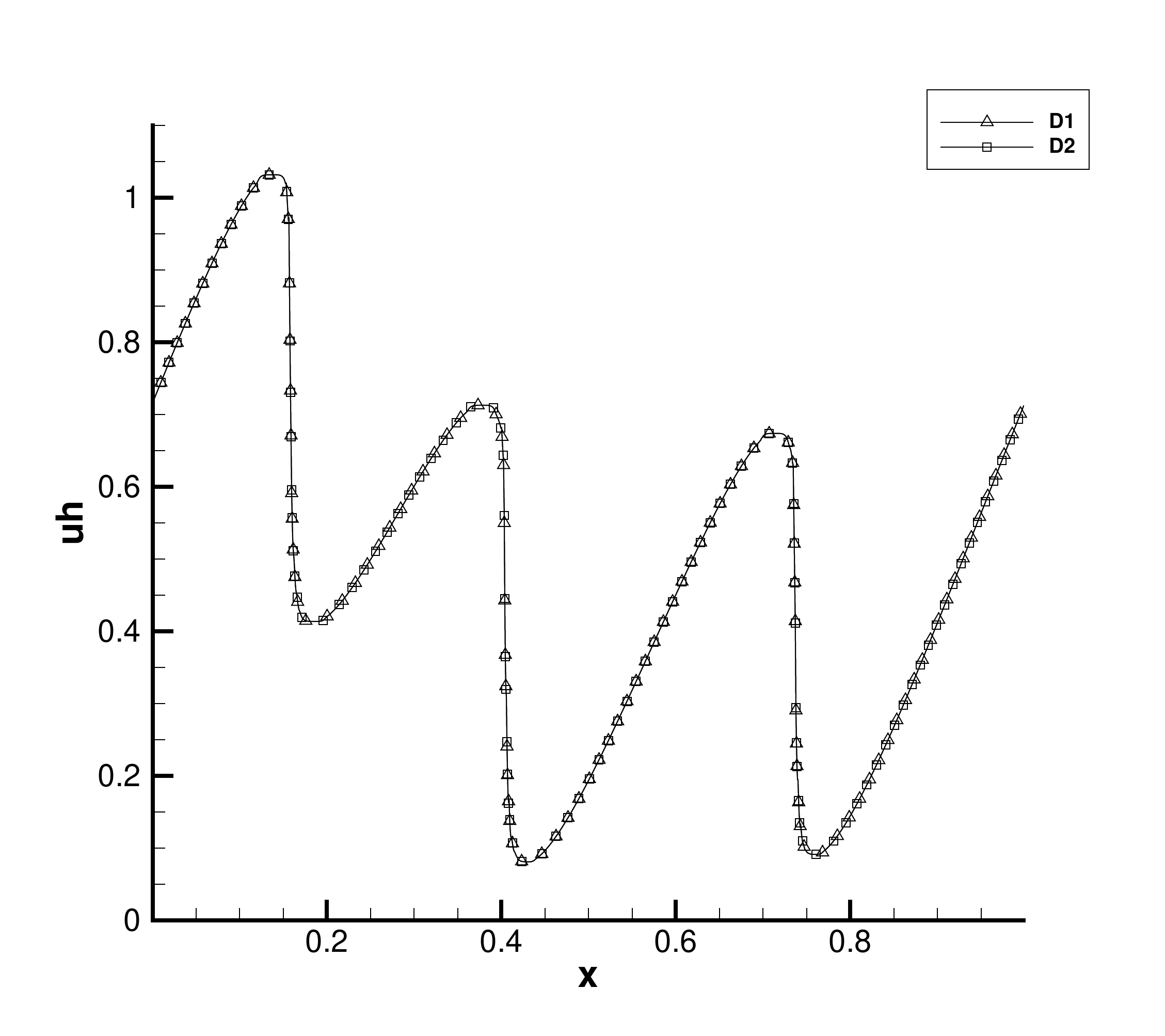}
		}
		\subfigure[$t=0.4$] {
			\includegraphics[width=0.45\columnwidth]{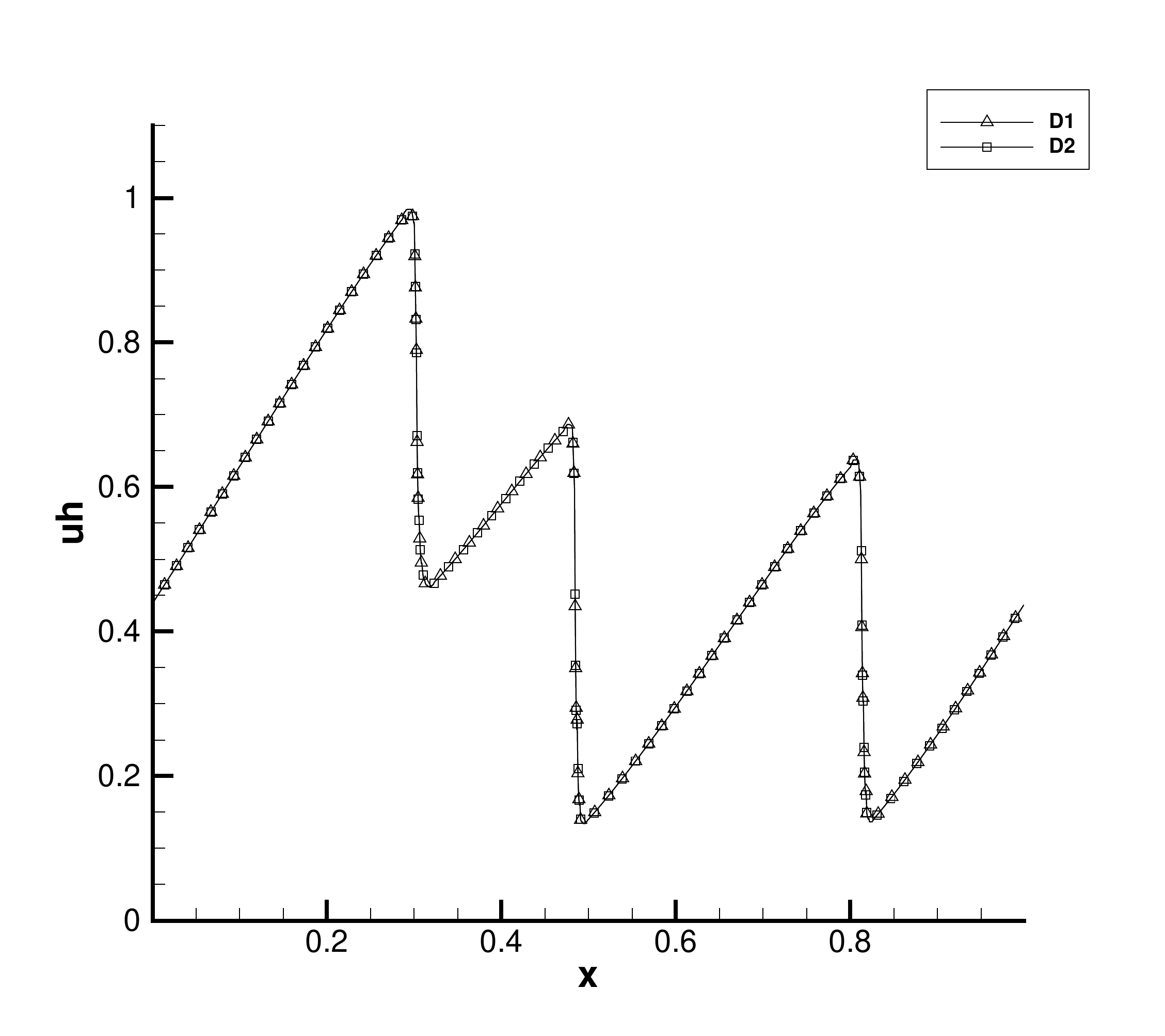}
		}
		\subfigure[$t=0.6$] {
			\includegraphics[width=0.45\columnwidth]{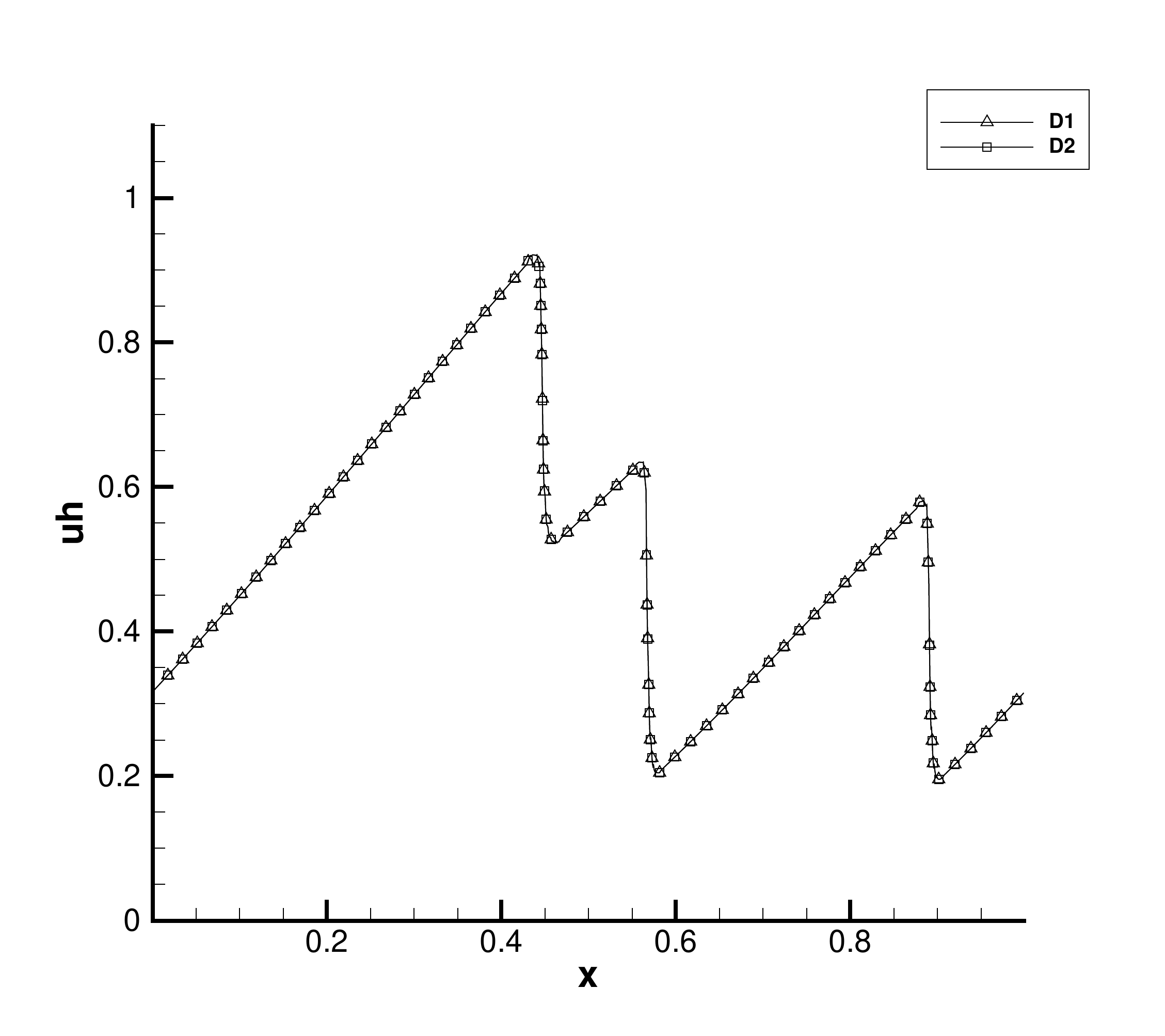}
		}
		\subfigure[$t=0.8$] {
			\includegraphics[width=0.45\columnwidth]{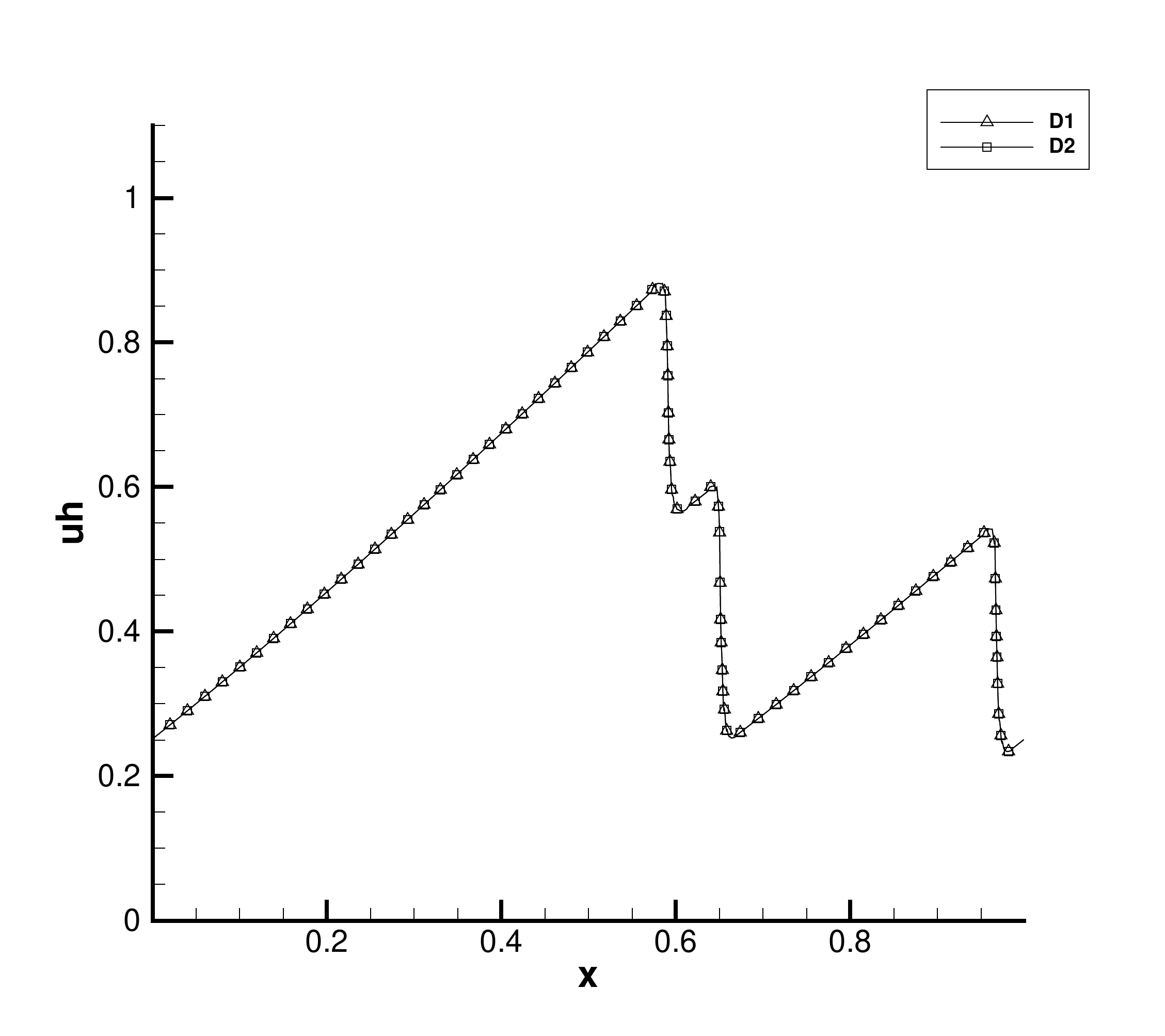}
		}
		\subfigure[$t=1.0$] {
			\includegraphics[width=0.45\columnwidth]{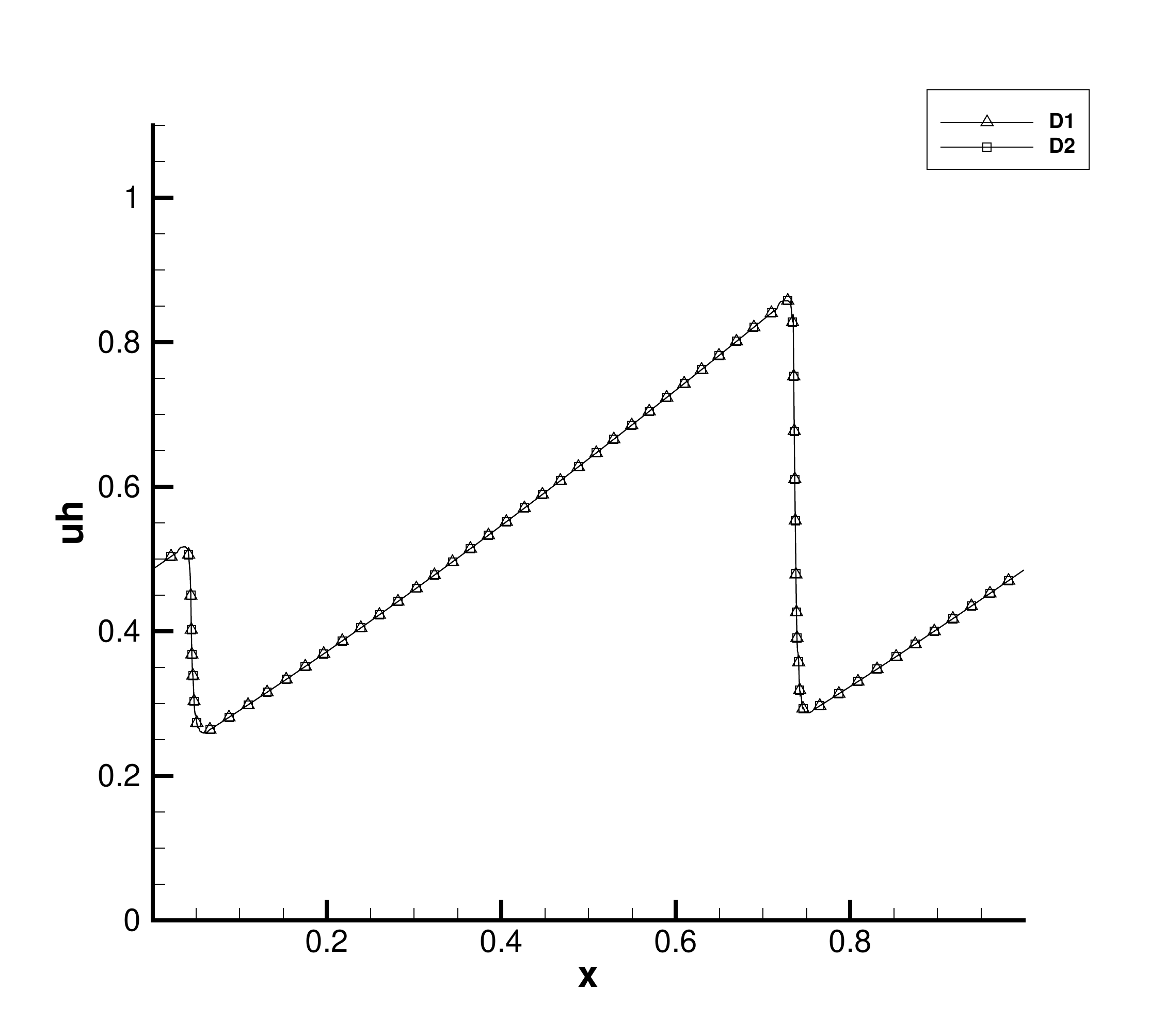}
		}
		\caption{  Example \ref{ex:shock}, different terminal time for initial data 2 \eqref{data2} in computational domain $[0,1]$, $p=2, N = 320, P^2$ elements. }
		\label{fig:shock2}
	\end{figure}

%

\end{example}

\begin{example}\label{ex:2soliton} $\bf{ Interaction \ of \ two \ solitons }$

The similarity between the KdV equation and the Fornberg-Whitham equation makes the interaction of solitons evolved
by the Fornberg-Whitham equation worth exploring. Using the initial condition of the
KdV equation in \cite{Cui_2007JCP} as
\begin{equation}\label{2soliton}
u(x,t) = 12\frac{\kappa_1^2e^{\theta_1} + \kappa_2^2e^{\theta_2}+2(\kappa_2-\kappa_1)^2e^{\theta_1 + \theta_2 }+ a^2(\kappa_2^2e^{\theta_1}+\kappa_1^2e^{\theta_2})e^{\theta_1 + \theta_2}}{(1+e^{\theta_1} + e^{\theta_2} + a^2e^{\theta_1+\theta_2})^2}
\end{equation}
where
\begin{equation*}
\begin{split}
&\kappa_1 = 0.4,\ \kappa_2 = 0.6, \ a^2 =\Big( \frac{\kappa_1-\kappa_2}{\kappa_1 + \kappa_2}\Big)^2,\\
&\theta_1 = \kappa_1x- \kappa_1^3t + 4,  \ \theta_2 = \kappa_2x- \kappa_2^3t + 15,
\end{split}
\end{equation*}
we model the interaction of two solitons in Figure \ref{fig:2soliton}. Here, the computational domain is set to $[-50,200]$. The process is similar to the KdV case: the two peakons travel from left to right, the speed of the taller one is larger than the shorter one. Finally, the taller one passes the shorter one and then they go further along with the opposite directions. The numerical solutions in Figure \ref{fig:2soliton} is consistent with the results in \cite{Liu2015_NM}.

\begin{figure}[!htp] \centering
		\subfigure[$t = 0.0$] {
			\includegraphics[width=0.45\columnwidth]{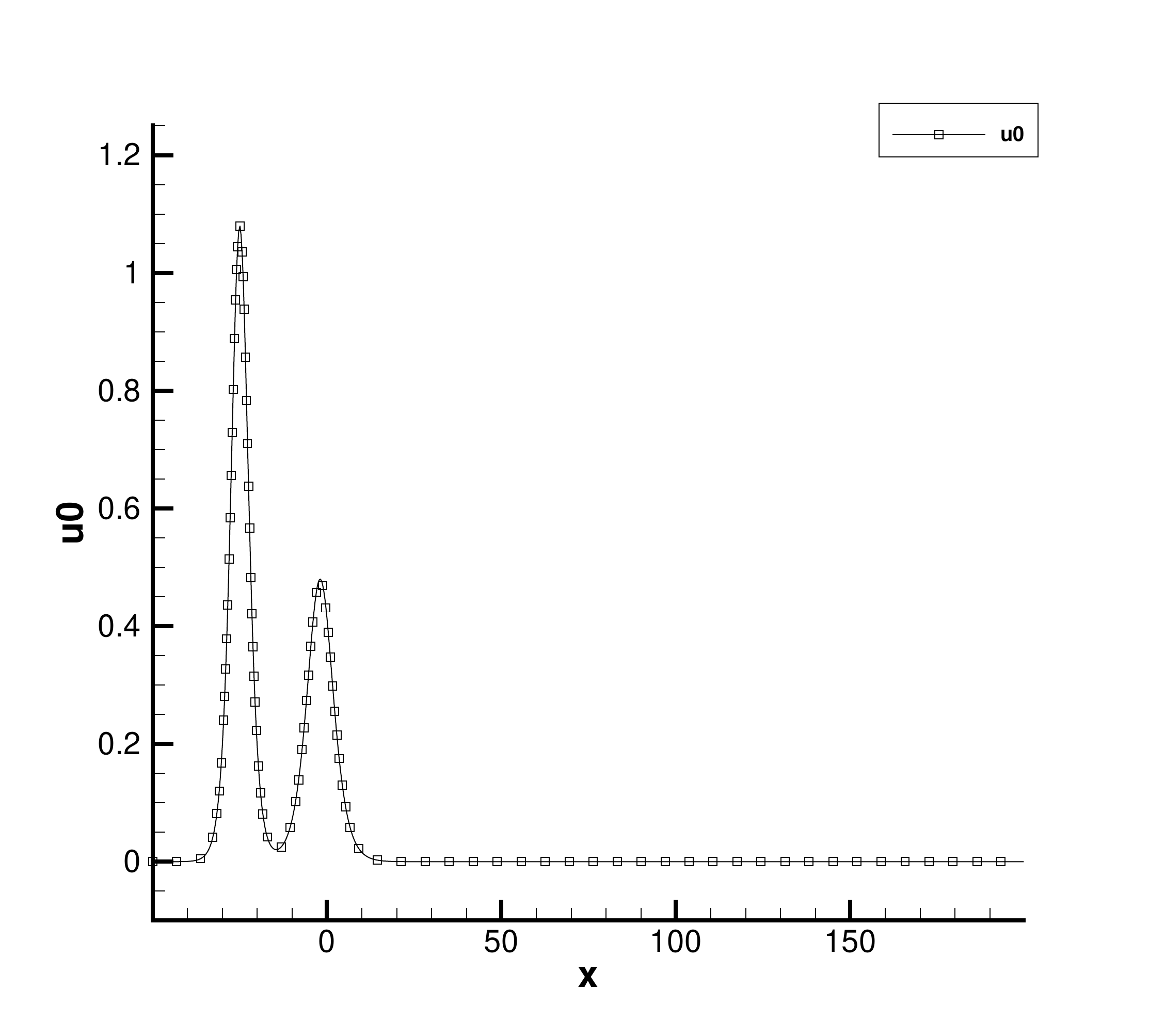}
		}
		\subfigure[$t = 40.0$] {
			\includegraphics[width=0.45\columnwidth]{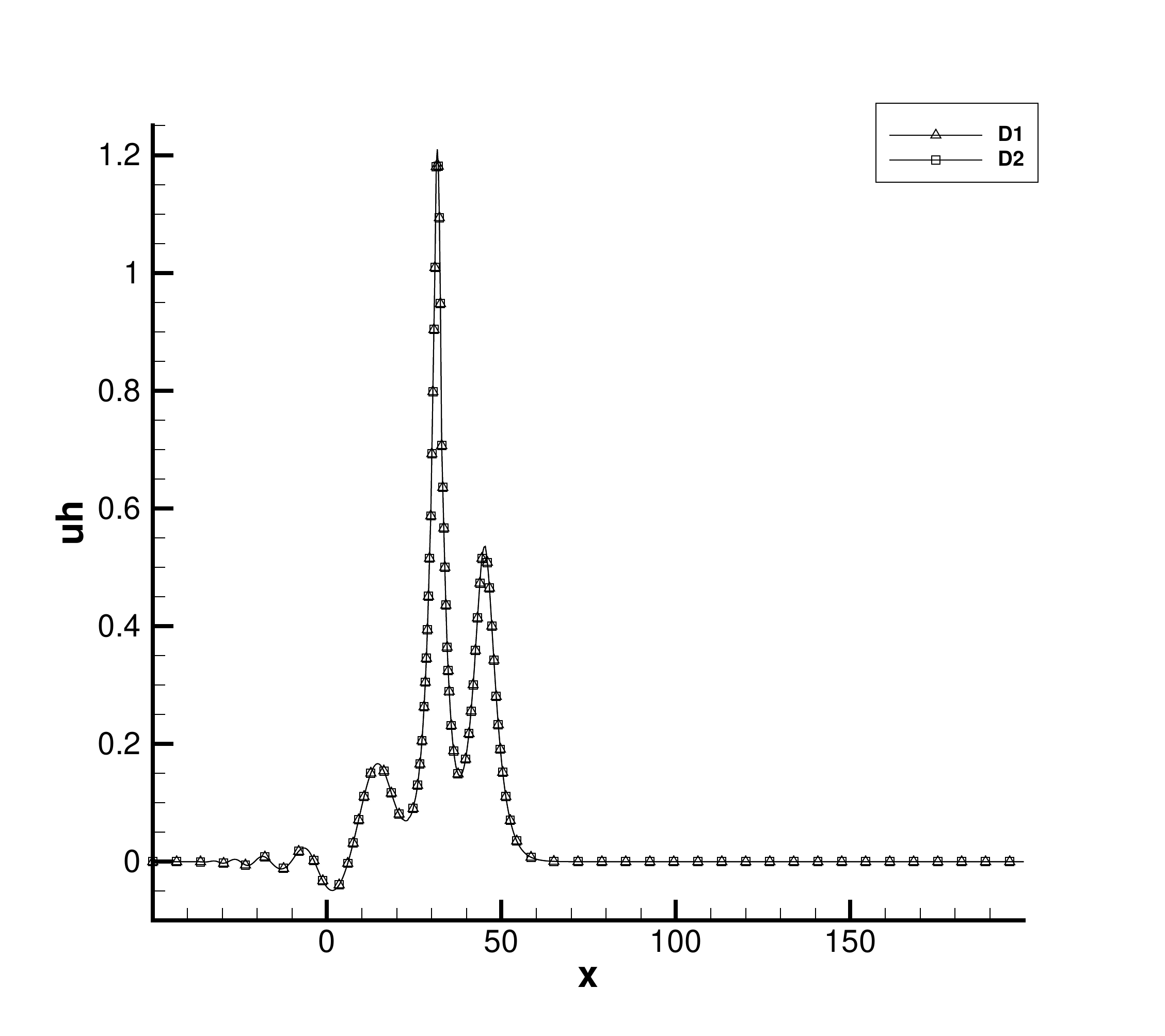}
		}
		\subfigure[$t = 80.0$] {
			\includegraphics[width=0.45\columnwidth]{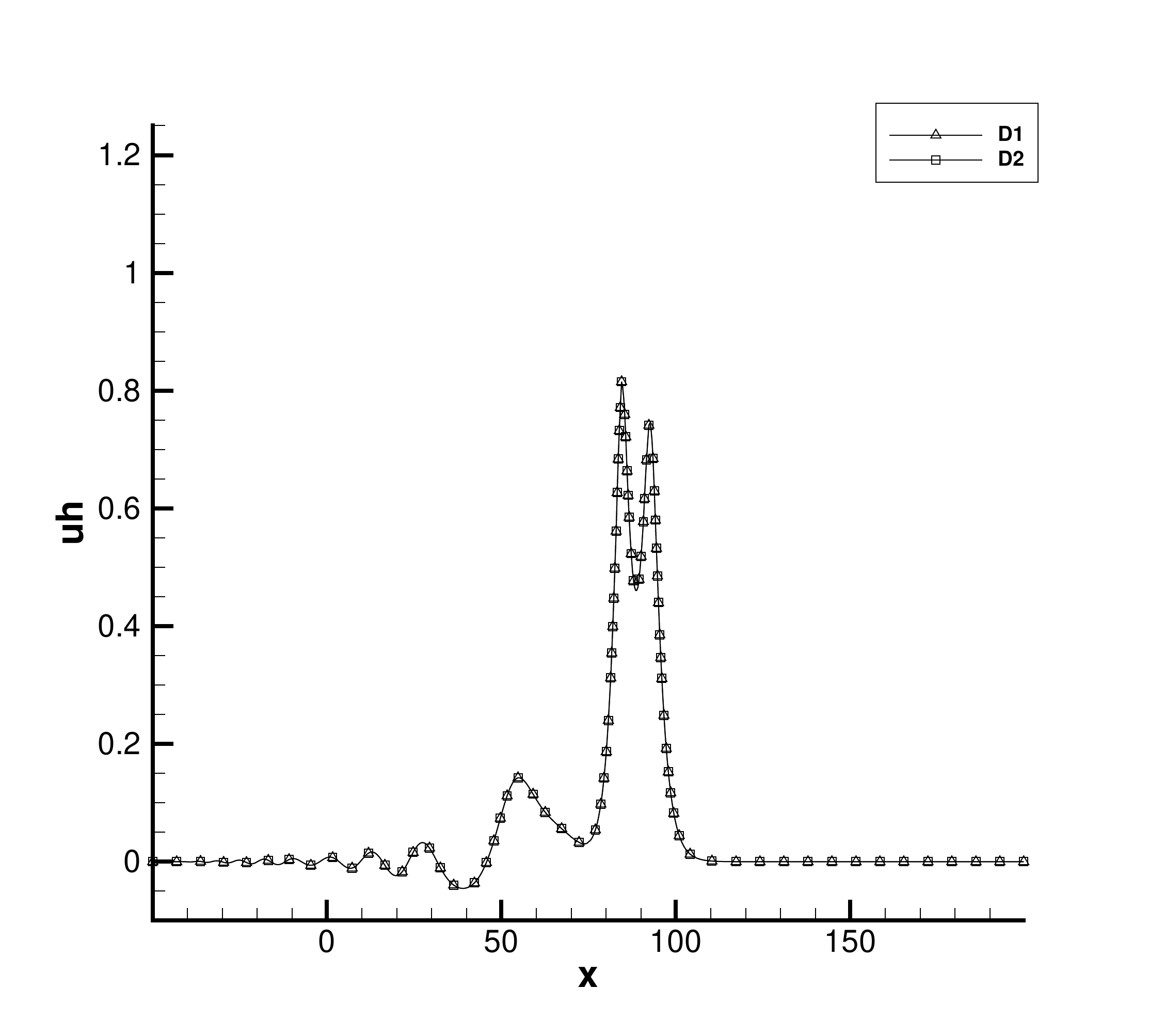}
		}
		\subfigure[$t = 120.0$] {
			\includegraphics[width=0.45\columnwidth]{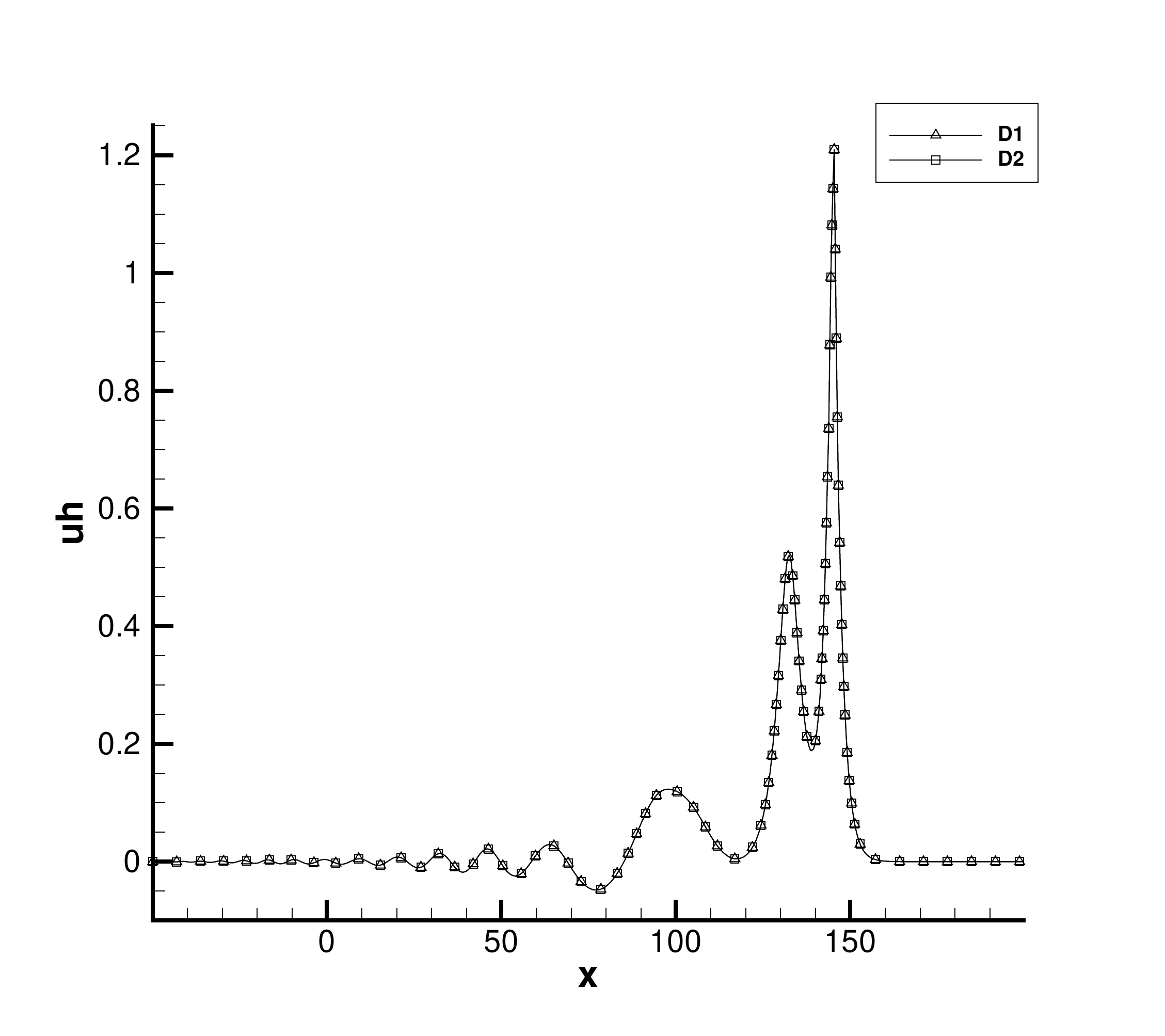}
		}
		
		\caption{\label{fig:2soliton} Example \ref{ex:2soliton}, interaction of two solitons \eqref{2soliton} with cells $N = 160$, $P^2$ elements. }
	\end{figure}

\end{example}

\begin{example}\label{ex:single_peakon} $\bf{ Single \ peakon\ solution}$

Our numerical schemes also work for the peakon solution whose first derivative is finite discontinuous. The exact solution for the case $p = 2$ is
\begin{align}\label{single_peakon}
u(x,t) = \frac{4}{3}\exp({-\frac{1}{2}\abs{x-st}}) + s - \frac{4}{3}
\end{align}
where $s$ is a constant denoting the speed of the wave. Because of the exponential decay of the solution, we can treat it as a periodic problem in the domain $[-25,25]$. We provide the sketches of this peakon solution at terminal time $T = 6$  with the speed $s = 2$. The approximations of the conservative schemes $\mathcal{C}1$ and $\mathcal{C}2$ have some oscillation at cells $ N = 160 $.
Refining the spatial meshes or using  higher order schemes can fix the oscillation, see Figure \ref{fig:single_peakon2}. From the limit value of the amplitude, it is noticed that the corner of the peakon solution is resolved better in the plots
$(c)$ and $(d)$ of Figure \ref{fig:single_peakon2} by higher order schemes.

\begin{figure}[!htp] \centering
		\subfigure[$P^2, N = 320$] {
			\includegraphics[width=0.45\columnwidth]{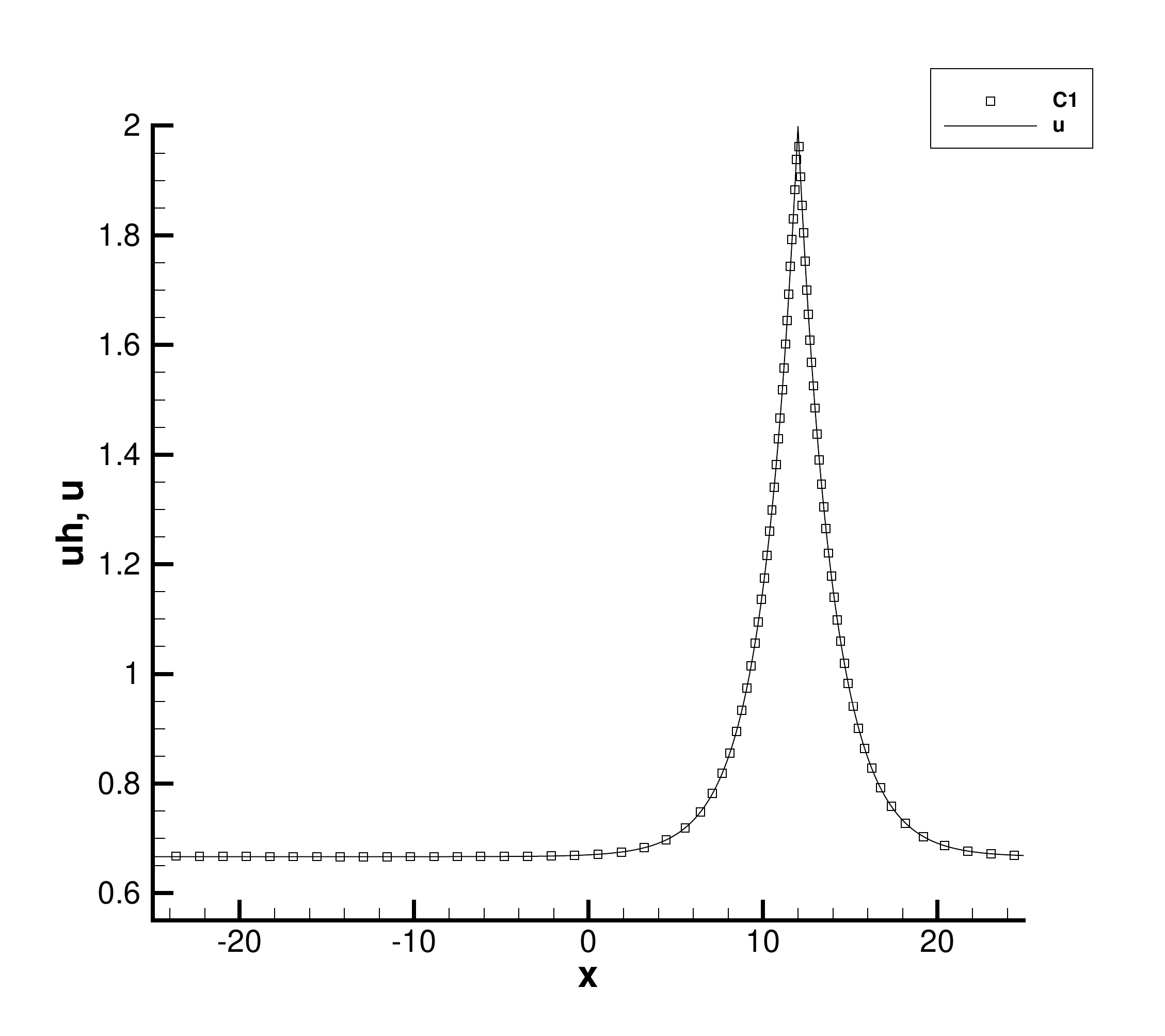}
		}
		\subfigure[$P^2, N = 320$] {
			\includegraphics[width=0.45\columnwidth]{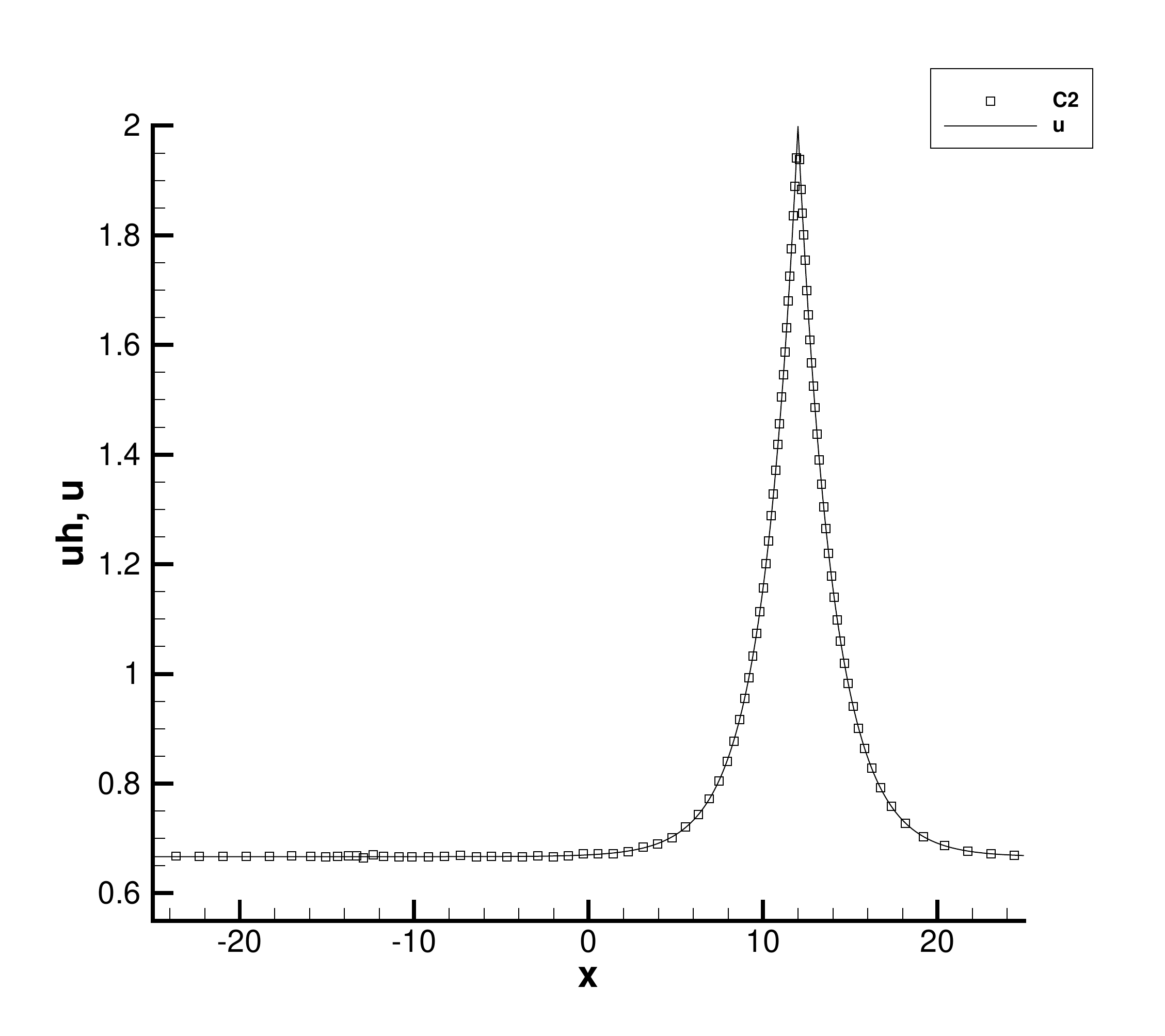}
		}
		\subfigure[$P^4, N = 160$] {
			\includegraphics[width=0.45\columnwidth]{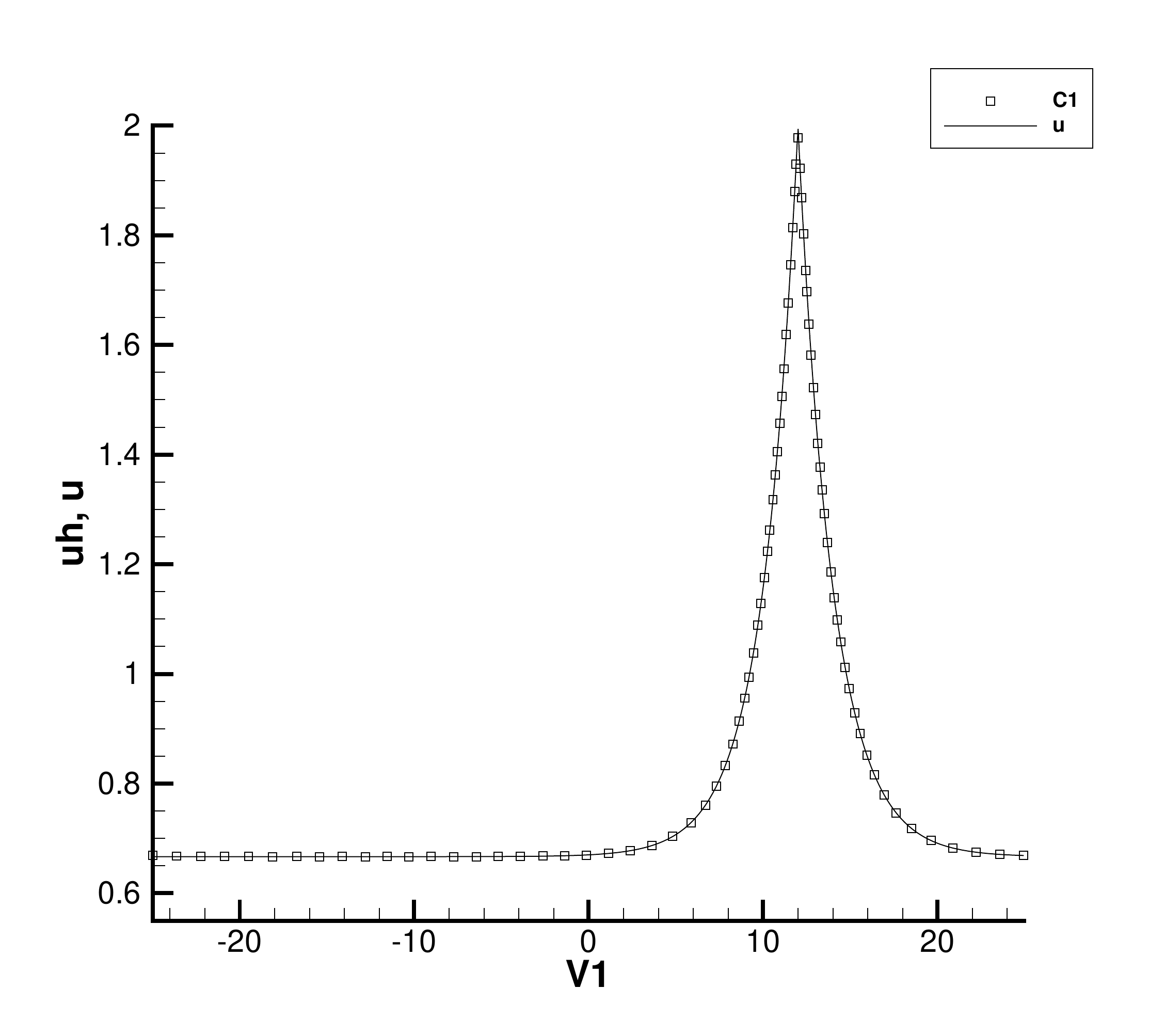}
		}
		\subfigure[$P^4, N = 160$] {
			\includegraphics[width=0.45\columnwidth]{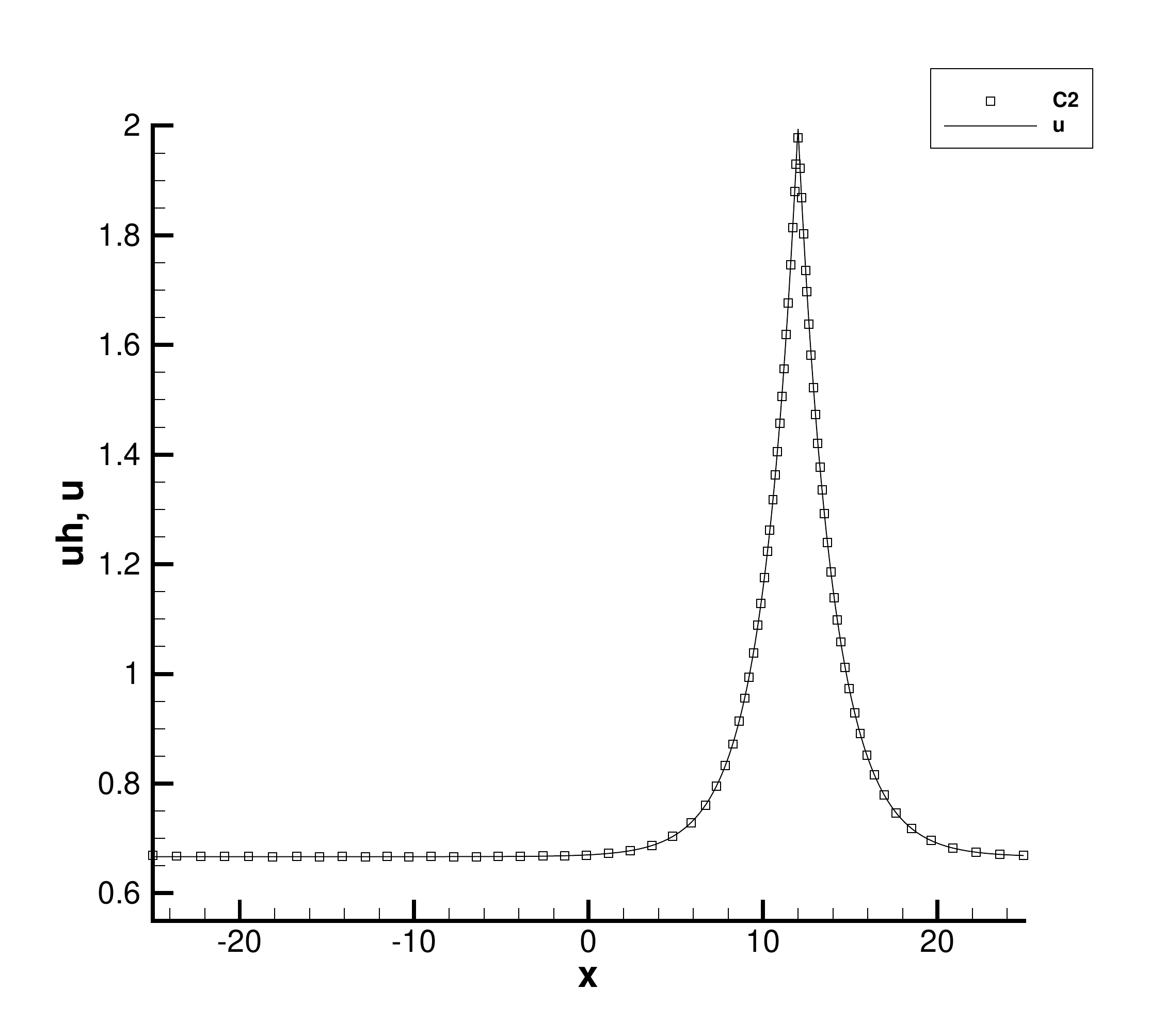}
		}
		
		\caption{\label{fig:single_peakon2} Example \ref{ex:single_peakon}, single peakon solution
\eqref{single_peakon} at $T = 6.0$ in the computational domain $[-25,25]$.    }
	\end{figure}

\end{example}

\begin{example}\label{ex:periodic_peakon} $\bf{ Periodic\ peakon\ solutions}$

For the Fornberg-Whitham equation i.e. $p=2$, we try to get the approximations for periodic peakon solutions \cite{Hormann_2018_arxiv} with period $2T_p$,
\begin{align}
&\label{periodic_peakon} u(x,t) = \varphi(x-st-2nT_p), \ \text{for} \ (2n-1)T_p < x-st < (2n+1)T_p,\\
&\varphi(\zeta) = d_+\exp(-\frac{1}{2}\abs{\zeta}) + d_-\exp(\frac{1}{2}\abs{\zeta}) + s - \frac{4}{3}, \ \zeta = x-st, \notag
\end{align}
where
\begin{align*}
&d_{\pm} = \frac{1}{6}(4\pm3\sqrt{4g + 4s-2s^2}),\\
&T_p = 2\abs{\ln(\phi -s + \frac{4}{3})- \ln(2d_-)} \ \text{with} \  \phi = \frac{1}{3}(-4+3s + \sqrt{2(9s^2-18s+8-18g)}).
\end{align*}

The parameters $s, g$ are constants where $s$ denotes the speed of the wave, and $g$ concerns the shape of the wave.
We give three different cases in Figure \ref{fig:peakon2}. When we take the speed $s = 2$, the peakon solution will tend to the cuspon solution, as the parameter $g\rightarrow\frac{4}{9}$. Our proposed schemes have accurate numerical solutions for the different values of $g$.
\begin{figure}[!htp]
\begin{center}
\begin{tabular}{ccc}
\includegraphics[width= 0.33\textwidth]{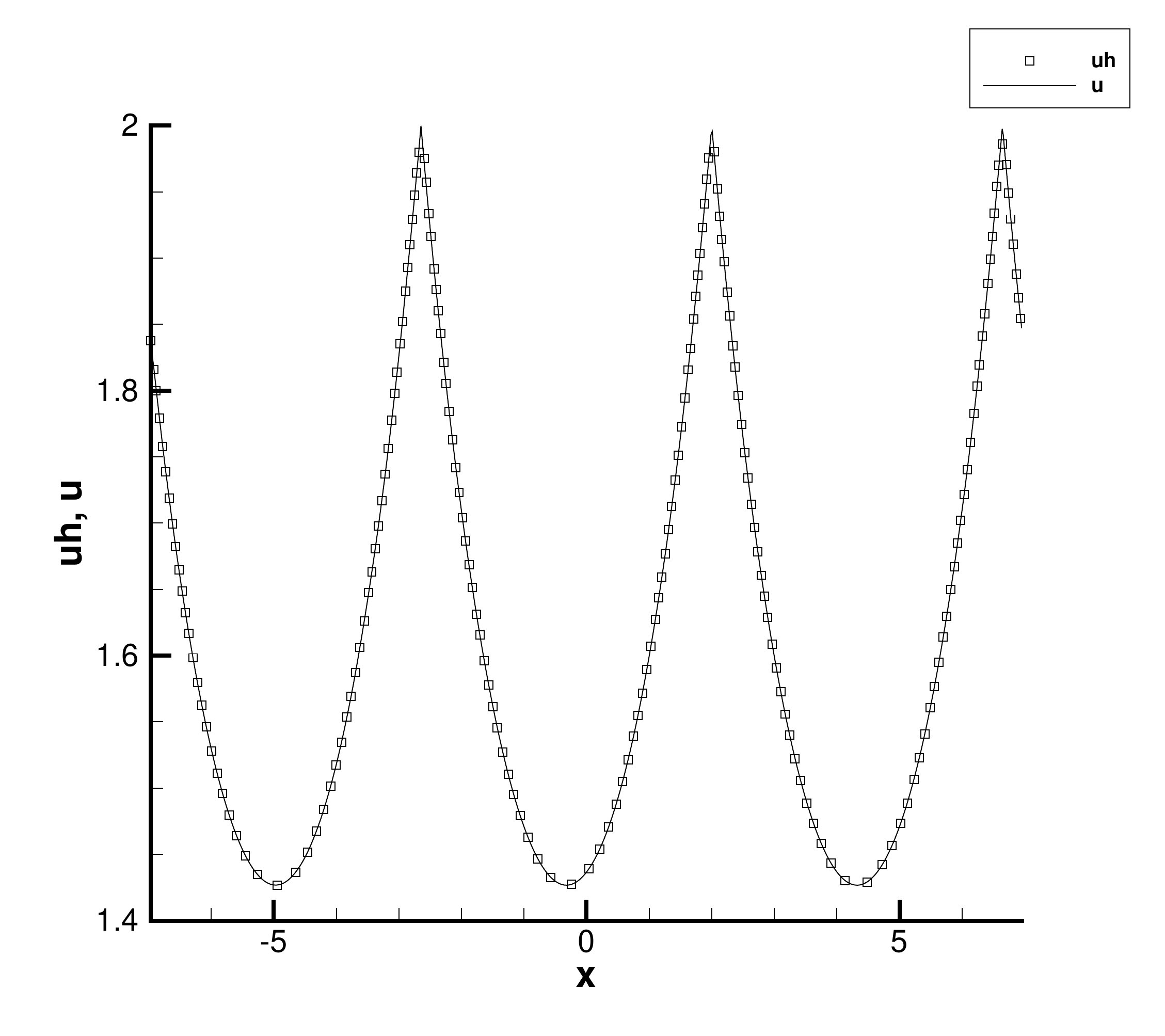} & \includegraphics[width=0.33\textwidth]{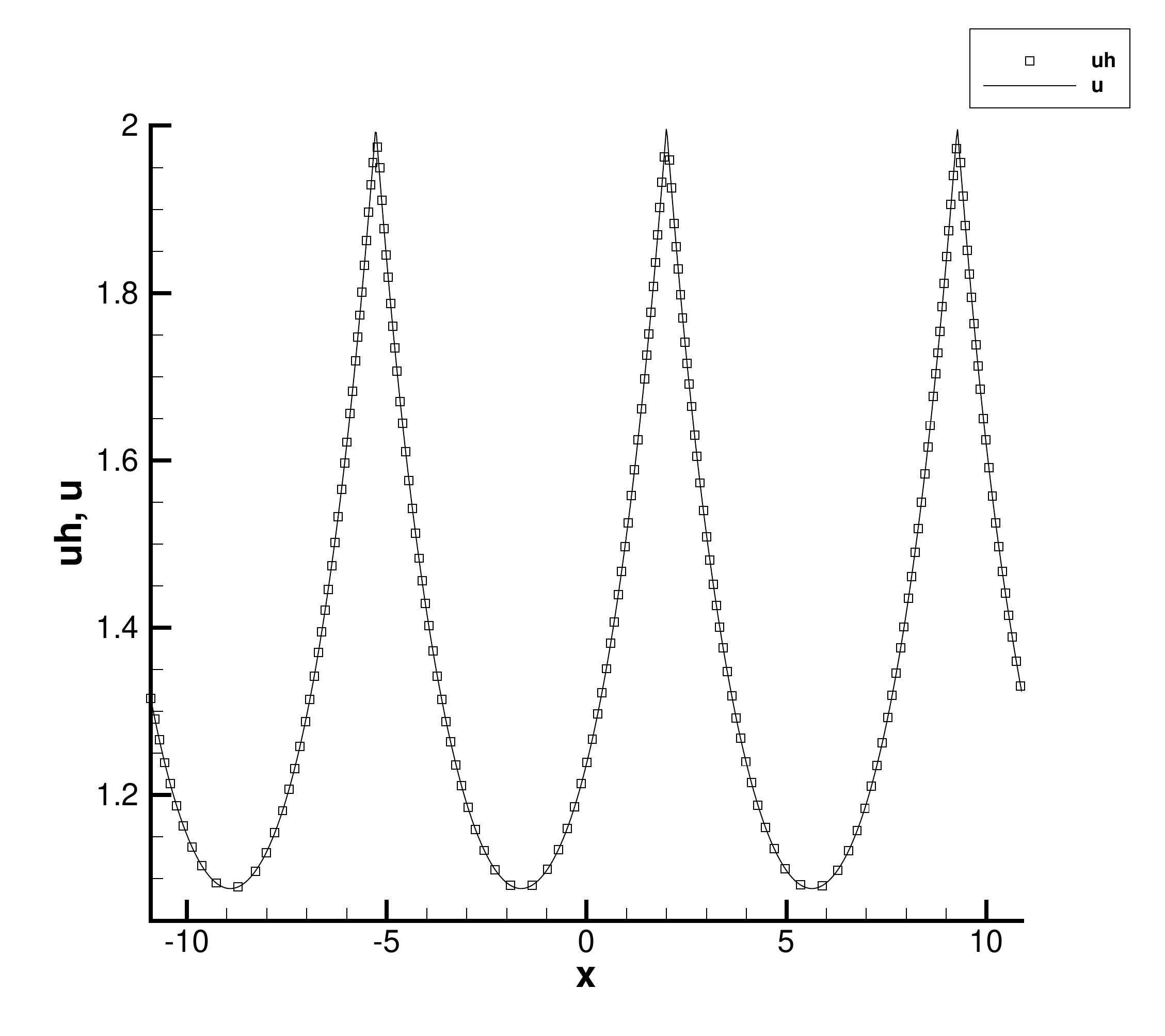}& \includegraphics[width=0.33\textwidth]{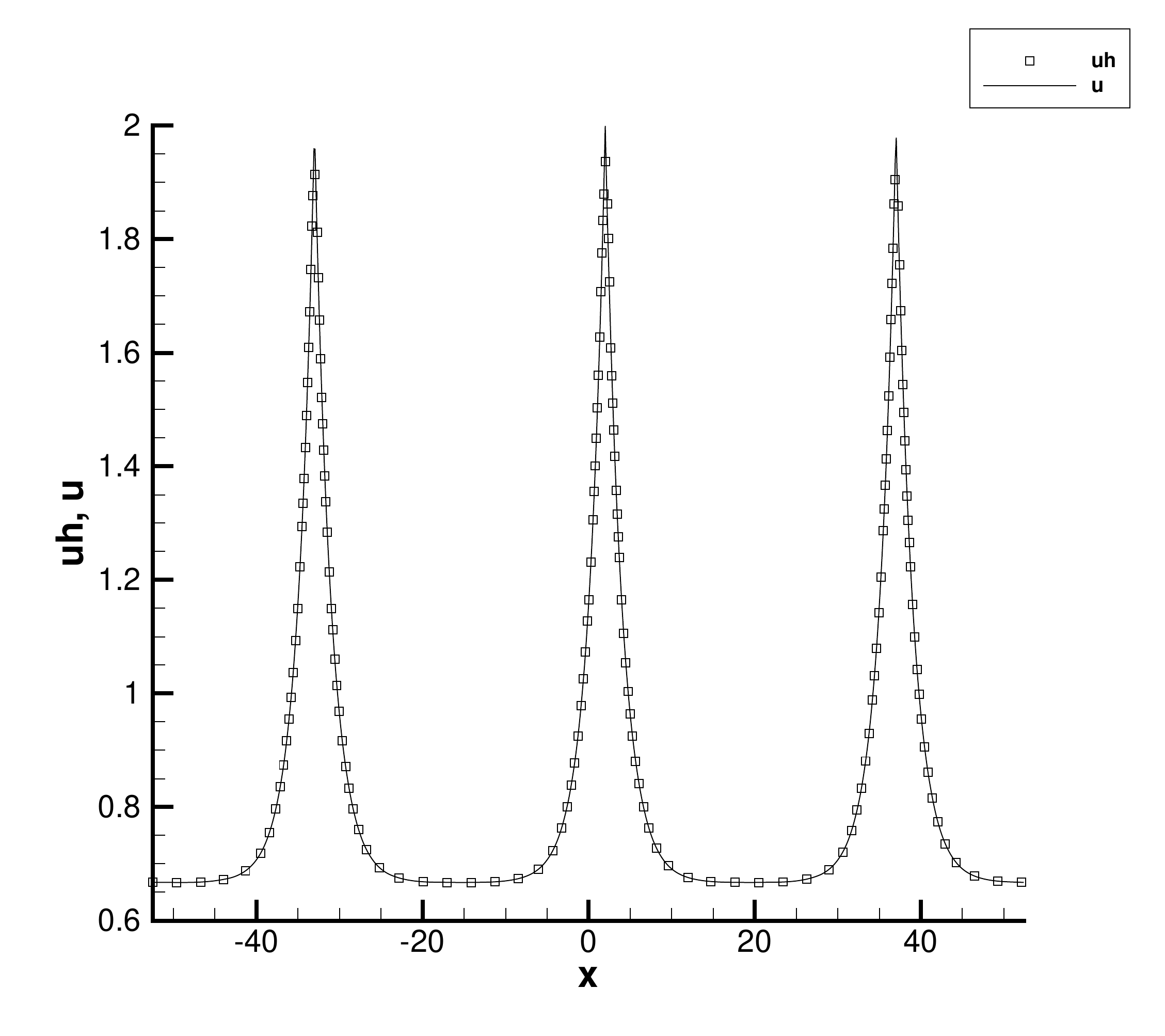}\\
(a) $ g = 0.3 $ & (b)  $ g = 0.4 $  & (c) $g = 0.4444444 $
\end{tabular}
\end{center}
\caption{\label{fig:peakon2} Example \ref{ex:periodic_peakon}, periodic peakon solutions \eqref{periodic_peakon} with different $g$ at $T=1$ in the computational domain $[-3T_p,3T_p]$, $N = 160, P^2$ elements.}
\end{figure}

For a long time approximation, we use $s = 2, g = 0.3$ in solution \eqref{periodic_peakon} as an example to illustrate the differences among the four numerical schemes. In Figure \ref{fig:peakonp2}, we take the degree  of piecewise polynomial space $k = 2$.
It can be seen that the dissipative schemes $\mathcal{D}1$ and $\mathcal{D}2$ become inaccurate due to the error of shape and decay of amplitude over a long temporal interval. The conservative schemes have more accurate approximation results than the dissipative ones. In Figure \ref{fig:peakonp4}, we show the results when the degree $k=4$. It tells that high order discretization methods can reduce shape error of waves effectively.  The conservative property we prove is a semi-discrete property for our schemes, which implies the fully discretization has energy fluctuation, see Table \ref{tab:Energy_fluctuation}. However, the conservativeness can reduce the dissipation of energy so that the conservative schemes have better approximation over a long temporal interval. {The shape error caused by dispersion error and phase speed error can be reduced by conservativeness or higher order accuracy,
the detailed analysis can be accomplished by Fourier expansion and error dynamics \cite{zhang2003_MMMAS,Sengupta2007_JCP}.}

In Table \ref{tab:CPU_time}, we make a comparison of the CPU time for the four proposed LDG schemes.
It is indicated that the dissipative scheme costs less time than the conservative scheme owing to the minimal stencils we choose, similar to \cite{Zhang2019CiCP}. On the other hand, Schemes $\mathcal{D}1$, $\mathcal{C}1$ in Section \ref{DG1} are more
effective than Schemes $\mathcal{D}2$, $\mathcal{C}2$ in Section \ref{DG2}, correspondingly.

%
\end{example}

\begin{figure}[!htp] \centering
		\subfigure[$t=0.0$] {
			\includegraphics[width=0.45\columnwidth]{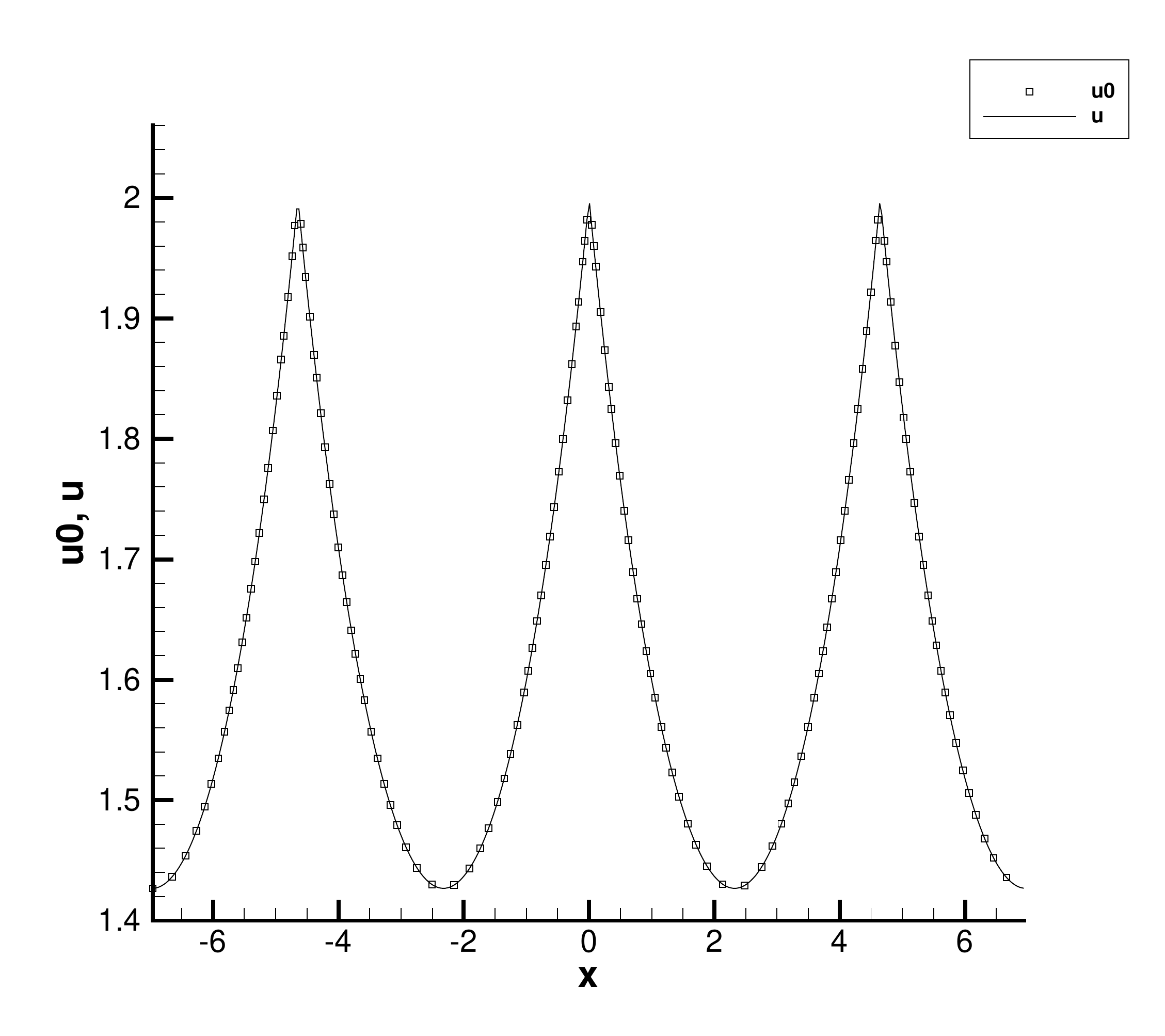}
		}
		\subfigure[$t=100.0$] {
			\includegraphics[width=0.45\columnwidth]{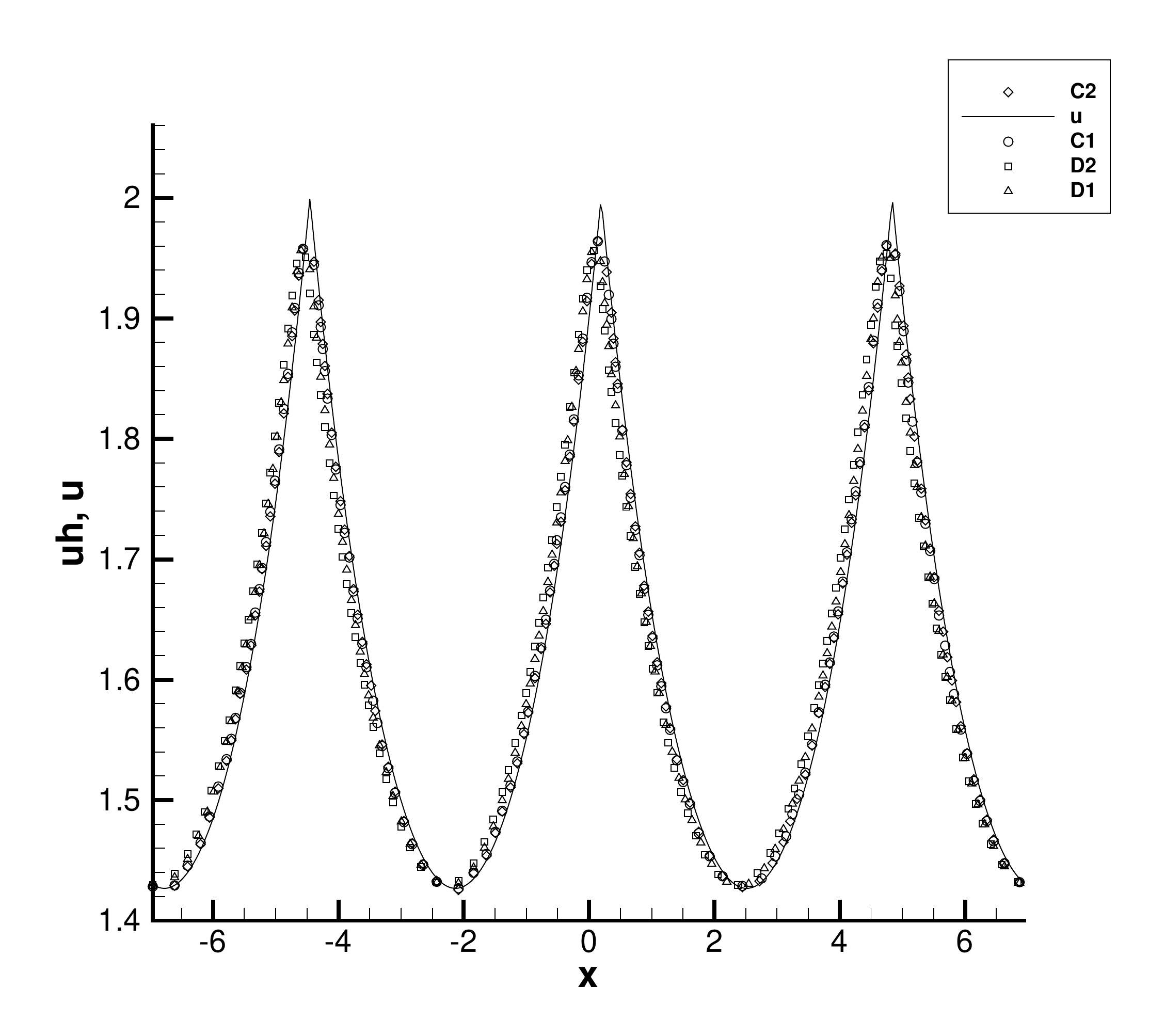}
		}
		\subfigure[$t=200.0$] {
			\includegraphics[width=0.45\columnwidth]{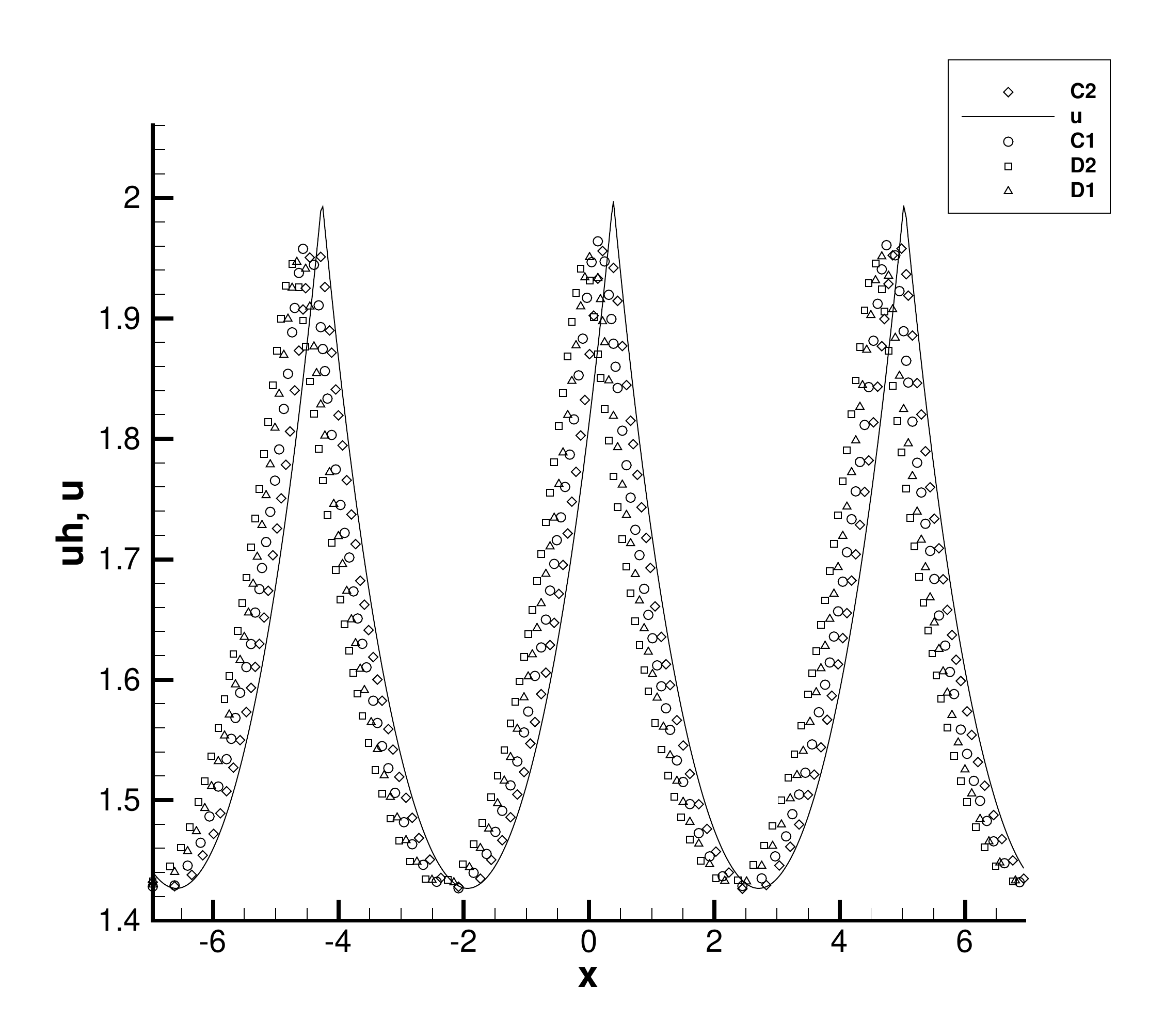}
		}
		\subfigure[$t=300.0$] {
			\includegraphics[width=0.45\columnwidth]{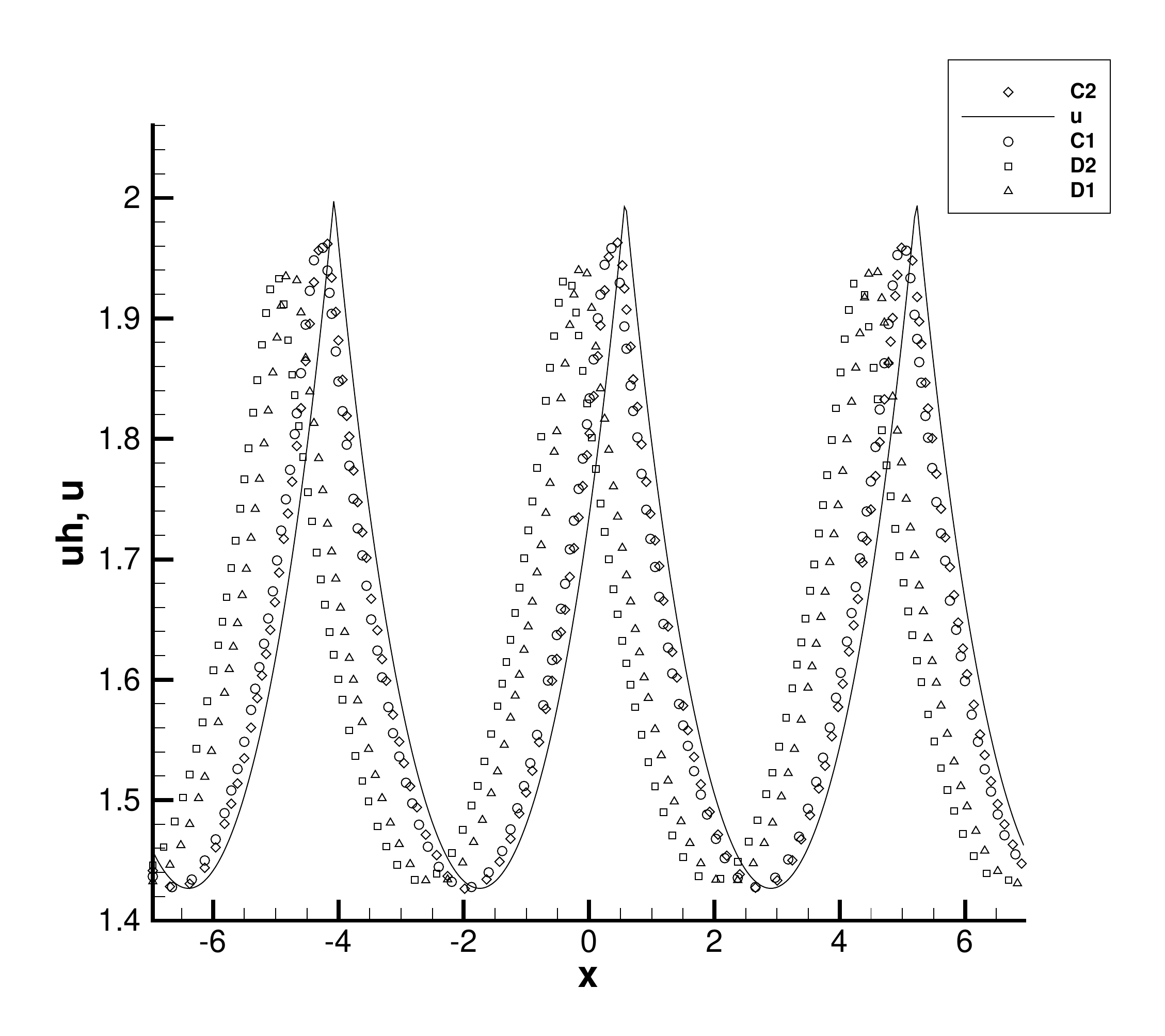}
		}
		
		\caption{\label{fig:peakonp2} Example \ref{ex:periodic_peakon}, periodic peakon solution \eqref{periodic_peakon}, $N = 80, P^2$ elements.    }
	\end{figure}


\begin{figure}[!htp] \centering
		\subfigure[$t=0.0$] {
			\includegraphics[width=0.45\columnwidth]{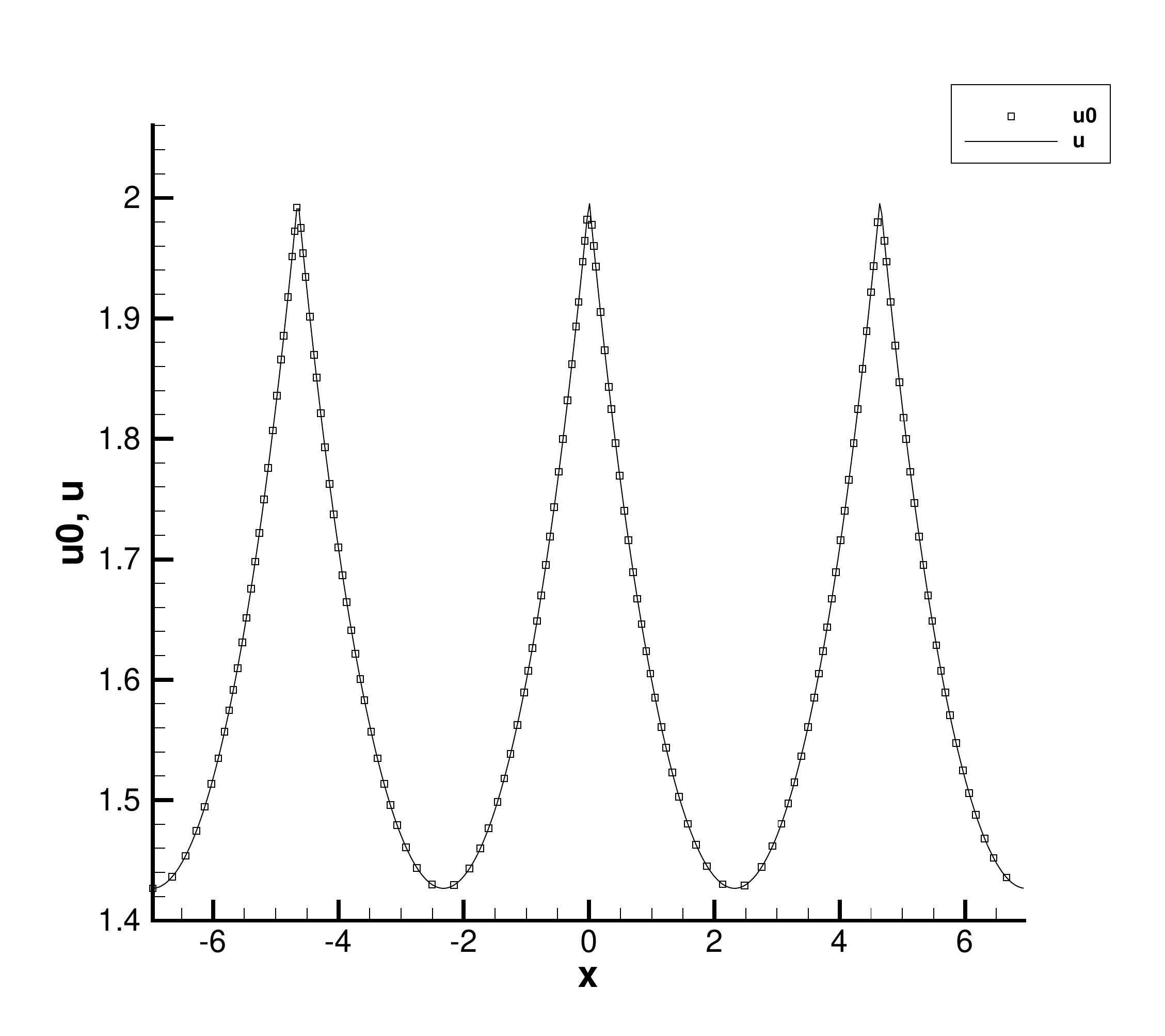}
		}
		\subfigure[$t=100.0$] {
			\includegraphics[width=0.45\columnwidth]{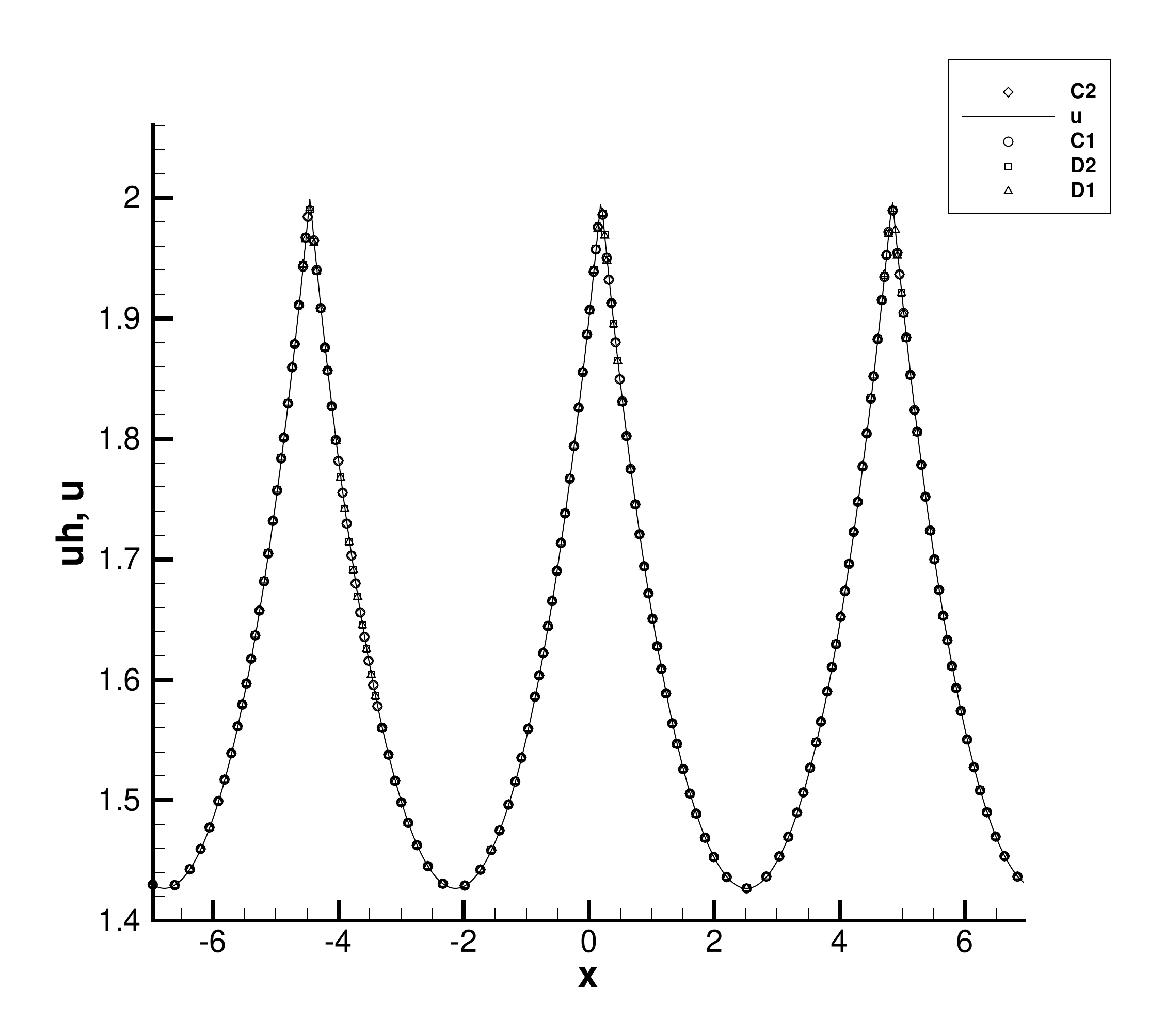}
		}
		\subfigure[$t=200.0$] {
			\includegraphics[width=0.45\columnwidth]{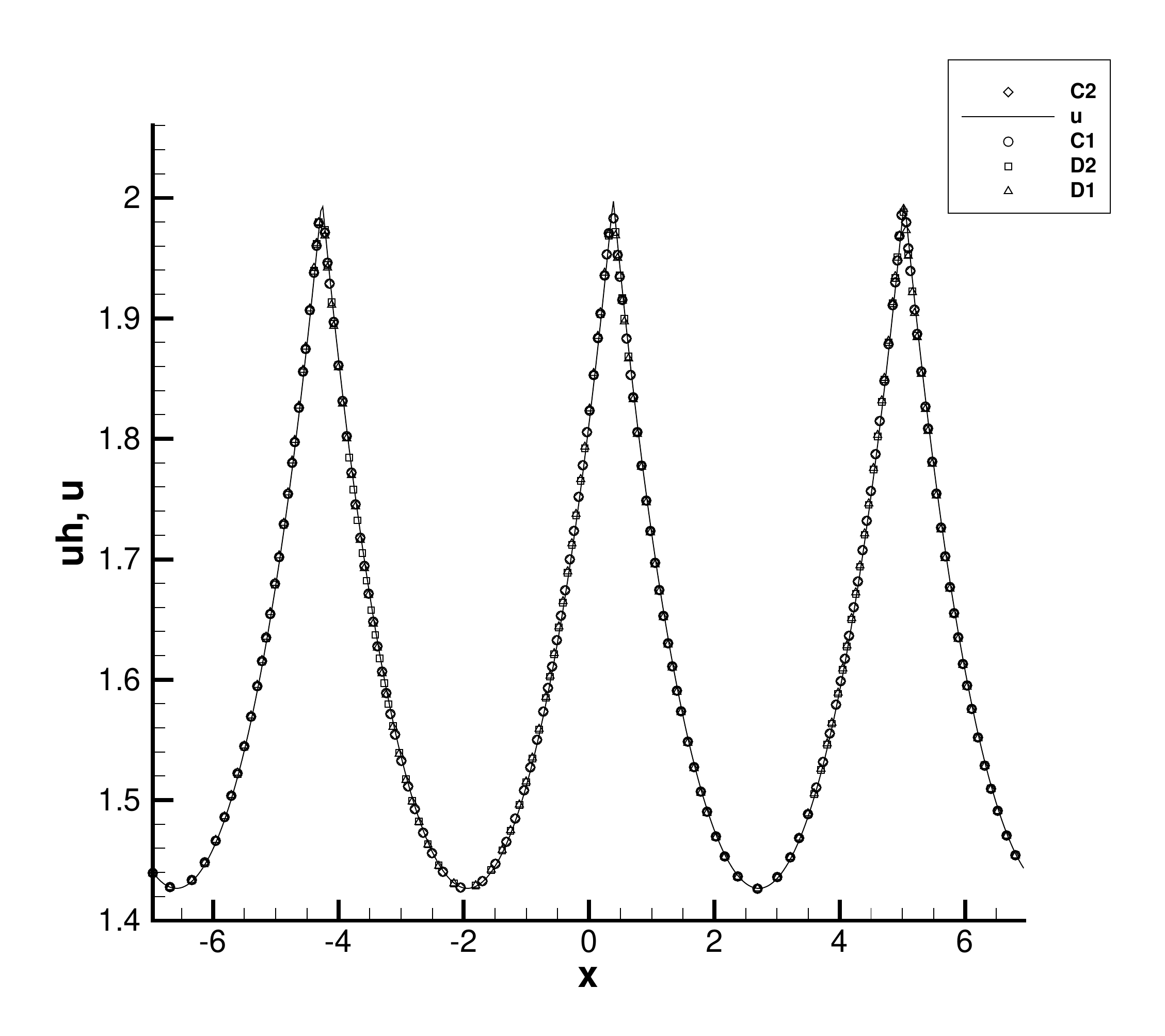}
		}
		\subfigure[$t=300.0$] {
			\includegraphics[width=0.45\columnwidth]{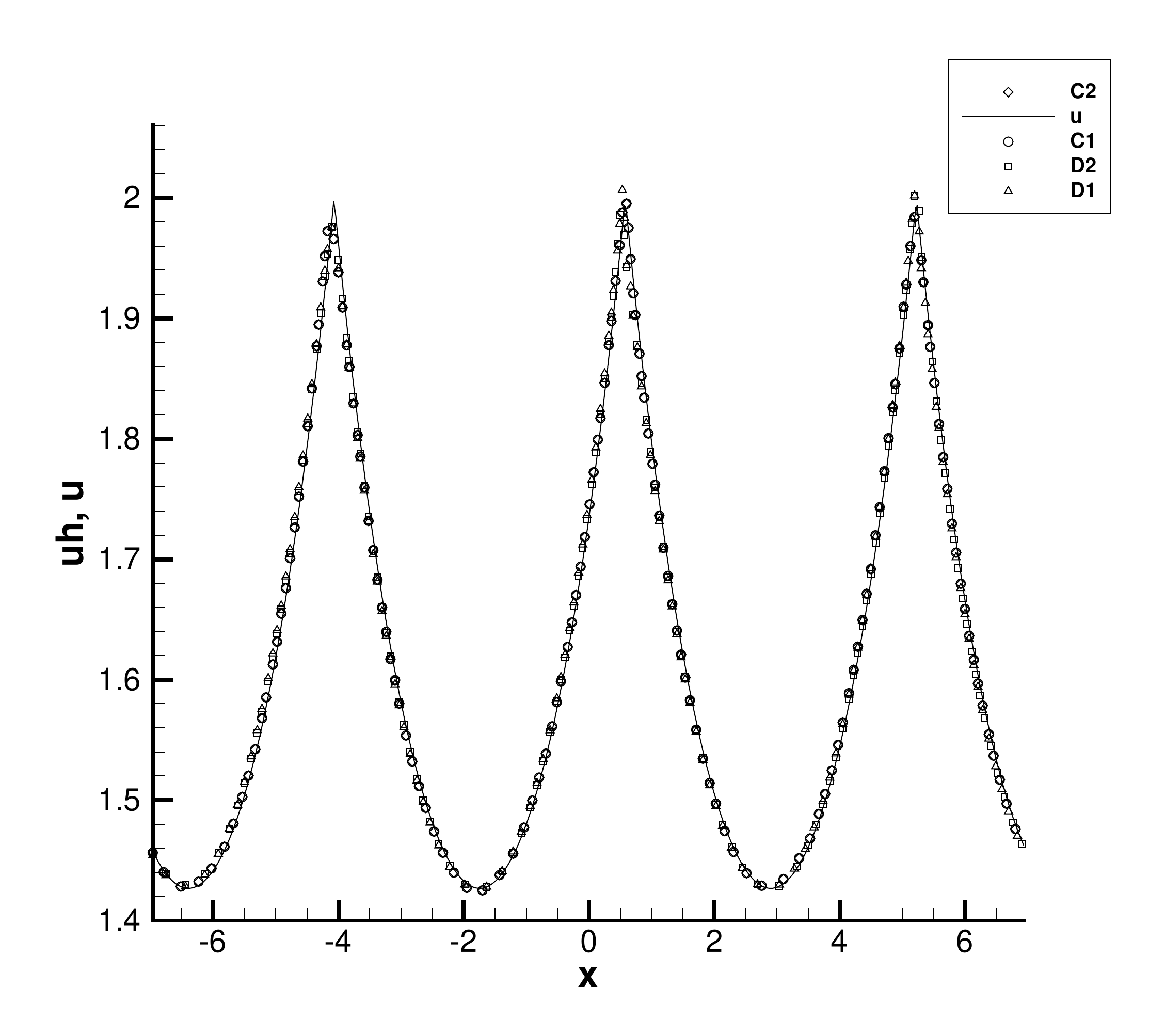}
		}
		
		\caption{\label{fig:peakonp4} Example \ref{ex:periodic_peakon}, periodic peakon solution \eqref{periodic_peakon}, $N = 80, P^4$ elements.   }
	\end{figure}


\begin{table}[!htp]
\begin{tabular}{ccccc}
\hline
     & Scheme $\mathcal{C}1$       & Scheme $\mathcal{C}2$  & Scheme $\mathcal{D}1$ & Scheme $\mathcal{D}2$     \\ \hline
$P^2$   &   41.7           &   72.3               &30.5   & 67.8 \\\hline

$P^4$   &   494.2              & 925.7              &378.8 & 898.5 \\\hline

\end{tabular}
\caption{\label{tab:CPU_time} Example \ref{ex:periodic_peakon}, CPU time of the
proposed LDG schemes for periodic peakon solution \eqref{periodic_peakon} at $T=300, N = 80$ cells. }

\end{table}

\begin{table}[!htp]
\begin{tabular}{ccccc}
\hline
     & Scheme $\mathcal{C}1$       & Scheme $\mathcal{C}2$  & Scheme $\mathcal{D}1$ & Scheme $\mathcal{D}2$     \\ \hline
$P^2$   &   1.14E-02             &  1.14E-02             &3.34E-02   & 3.36E-02 \\\hline

$P^4$   &  1.44E-03              &    1.44E-03           &3.87E-03 & 4.27E-03 \\\hline

\end{tabular}
\caption{\label{tab:Energy_fluctuation} Example \ref{ex:periodic_peakon}, energy fluctuation of the
proposed LDG schemes for periodic peakon solution \eqref{periodic_peakon} at $T = 300, N = 80$ cells. }

\end{table}

{
\begin{remark}
In summary, the numerical experiments demonstrate that the LDG scheme for \eqref{eqn:FW_intro2}  in Section \ref{DG2} is similar to the LDG scheme for  \eqref{eqn:FW_intro} in Section \ref{DG1} on the accuracy, convergence rate numerically or theoretically, and the capability for shock solutions. And the LDG scheme for \eqref{eqn:FW_intro2} is inferior in the behavior for a long time approximation and efficiency.  However, the LDG scheme for \eqref{eqn:FW_intro2} can handle the cases with different $f(u)$ on the
two sides of the equation, such as the modified Fornberg-Whitham equation $$ u_t - u_{xxt} + (\frac{1}{3}u^3)_x + u_x = (\frac{1}{2}u^2)_{xxx}.$$ The LDG scheme  for \eqref{eqn:FW_intro2} can be applied more widely. More details will be given
in our future work which is out of the scope of this paper.
\end{remark}}

\section{Conclusion}\label{conclusion}
In this paper, for the two different forms of the Fornberg-Whitham type equations, we construct dissipative LDG schemes $\mathcal{D}1$ and $\mathcal{D}2$, and conservative DG schemes $\mathcal{C}1$ and $\mathcal{C}2$.
For the dissipative schemes, the optimal order of accuracy can be achieved numerically, which are identical to the theoretical proof for odd $p$.  While for even $p$, suboptimal convergence rate can be proved. The conservative schemes have only $k$-$th$ accuracy order for odd degree polynomial space, and $(k+1)$-$th$ order for even $k$ numerically. In theory, both $k$-$th$ order can be verified. Different solutions, including shock solutions and peakon solutions, can be resolved well with our proposed LDG schemes. For a long time approximation, conservative schemes can reduce the dissipation significantly, and high-order accurate schemes can
achieve the same improvement.

\begin{appendix}
\section{Appendix: Proof of several lemmas}

\subsection{Proof of Lemma \ref{lemma:another_energy}}
\label{proofanother_energy}
In scheme \eqref{scheme:DG}, we take test functions as
\begin{align*}
\varphi = v_h, \ \psi = q_h, \ \psi = u_h.
\end{align*}
After summing up \eqref{scheme:DG1} and \eqref{scheme:DG3} (twice with different test functions) over all intervals, we obtain
\begin{align*}
(v_h , v_h)_{I} + (q_h , q_h)_{I} - \mathcal{L}^-({q_h}, v_h) -\mathcal{L}^+({v_h}, q_h)- \mathcal{L}^-({u_h}, v_h) -\mathcal{L}^+({v_h}, u_h)
                                      = -(q_h , u_h)_{I},
\end{align*}
for the dissipative scheme $\mathcal{D}1$,

then Lemma \ref{lemma:properties_DN} can be used to derive the equality \eqref{eqn:another_energy}.
Notably, we can replace all operators $\mathcal{L}^+, \mathcal{L}^-$ with $\mathcal{L}^c$ in the above equation to get
the same equality for the conservative scheme $\mathcal{C}1$.

\subsection{Proof of Lemma \ref{lemma:b}}
\label{proofb}
There are two terms for $\mathcal{B}_j$,
\begin{align*}
\mathcal{B}_j(\xi^u- \eta^u, &\xi^v- \eta^v, \xi^q- \eta^q ; \xi^u, \bm{\xi^1}, \bm{\xi^2})\\
 &=  \mathcal{B}_j(\xi^u, \xi^v, \xi^q; \xi^u, \bm{\xi^1}, \bm{\xi^2}) - \mathcal{B}_j(\eta^u, \eta^v, \eta^q; \xi^u, \bm{\xi^1},\bm{\xi^2}). \tag{A.2.1}
\end{align*}
\begin{itemize}
\item For \eqref{eqn:B_estimate} of the dissipative scheme $\mathcal{D}1$:
Via the proof of $L^2$ dissipation and Lemma \ref{lemma:another_energy}, we get
\begin{align*}
\mathcal{B}_j(\xi^u, \xi^v, \xi^q; \xi^u, \bm{\xi^1}, \bm{\xi^2}) &= (\xi_t^u,\xi^u)_{I_j}  + \norm{\xi^v}^2_{L^2(I_j)} + \norm{\xi^q}^2_{L^2(I_j)} \\
&+ (\xi^u,\xi^q)_{I_j} + \frac{1}{2}(\jump{\xi^u}+ \jump{\xi^q})^2_{j+\frac{1}{2}} + \frac{1}{2}\jump{\xi^v}^2_{j+\frac{1}{2}}.
\end{align*}
As for the second term of (A.2.1), it can be derived by
the properties of Gauss-Radau projection which causes the boundary terms to vanish, as well as the integral terms with spatial derivatives.
\item For \eqref{eqn:B_estimate2} of the conservative scheme $\mathcal{C}1$:
 The first term of $\mathcal{B}_j$ is obtained by the $L^2$ conservation and Lemma \ref{lemma:another_energy}, i.e.
\begin{align*}
&\mathcal{B}_j({\xi}^u, {\xi}^v, {\xi}^q; {\xi}^u, \bm{\xi^1}, \bm{\xi^2})= ({\xi}_t^u,{\xi}^u)_{I_j}  + \norm{{\xi}^v}^2_{L^2(I_j)} + \norm{{\xi}^q}^2_{L^2(I_j)}+ (\xi^u,\xi^q)_{I_j}
\end{align*}
The second term of $\mathcal{B}_j$ is derived by the property of $L^2$ projection, orthogonality to all polynomials of degree up to $k$. Thus the remaining is some boundary terms as we state in \eqref{eqn:B_estimate2}.
\end{itemize}
Therefore, we have the results \eqref{eqn:B_estimate} and \eqref{eqn:B_estimate2} for the bilinear term $\mathcal{B}_j$.

\subsection{Proof of Lemma \ref{lemma:H_estimate2}}
\label{proofH_estimate2}
For the part $\mathcal{T}_1$, we use Taylor expansion on $ f(u) - f(u_h), f(u) - f(u^{ref}_h)$ respectively,
\begin{align*}
f(u) - f(u_h) = -f'(u)(u_h-u) -\frac{1}{2}f''_u(u_h-u)^2,\\
f(u) - f(u_h^{ref}) = -f'(u)(u^{ref}_h-u) -\frac{1}{2} \tilde{f}''_u(u^{ref}_h-u)^2,
\end{align*}
where $f''_u,  \tilde{f}''_u$ are the mean values. Substituting the above equations into the equation \eqref{H_divided}, we obtain
\begin{align*}
\mathcal{T}_1
=& -\sum\limits_{j=1}^{N} f'(u)\eta^{ref}\jump{\xi^u} - (f'(u)\eta^u,\xi^u_x)
   +\sum\limits_{j=1}^{N} f'(u)\xi^{ref}\jump{\xi^u} + (f'(u)\xi^u,\xi^u_x)\notag \\
  & -\sum\limits_{j=1}^{N}( \frac{1}{2} \tilde{f}''_u(u_h^{ref}-u)^2\jump{\xi^u} - (\frac{1}{2} f''_u(u_h-u)^2,\xi^u_x)) \triangleq O_1 + O_2 + O_3. \tag{A.3.1}
\end{align*}
For writing convenience, we omit the subscripts $j+\frac{1}{2}$ for the boundary terms, $I_j$ for the integral terms. And we need to explain the notations we have used in the above equation. We have already defined $\eta^u = \mathcal{P}^-u - u, \xi^u = \mathcal{P}^-u-u_h$. Due to the different directions of $u_h^{ref}$, the value on the boundary point $x_{j+\frac{1}{2}}$ for $u-u_h^{ref} \triangleq   \xi^{ref}- \eta^{ref}$ can be represented by
\begin{align*}
&u - u_h^{-} = (\mathcal{P}^-u-u_h)^- - (\mathcal{P}^-u-u)^- \triangleq \xi^{ref} - \eta^{ref}, \\
&u - u_h^{+} = (\mathcal{P}^+u-u_h)^+ - (\mathcal{P}^+u-u)^+ \triangleq \xi^{ref} - \eta^{ref}, \tag{A.3.2}\\
&u - \average{u_h} = \average{\mathcal{P}u-u_h} - \average{\mathcal{P}u-u} \triangleq  \xi^{ref} - \eta^{ref}.
\end{align*}
Notably, the value of $\xi,\eta$ varies with the direction ``$ref$". This setting is for the following estimates.

\begin{itemize}
\item $O_1$ term:
For the cases $\eta^{ref} =\eta^+$ or $\eta^-$, according to the definition in (A.3.2), we have $\eta^{ref} = 0$. Now we check the case $\eta^{ref} =\average{\eta}$, i.e. the $f'(u)$ change its sign on $I_j \cup I_{j+1}$. There is an extra $h$ by $\abs{f'(u)}\leq C_*h $, thus the estimate for the $O_1$ term can be written as
\begin{align*}
 O_1  &\leq \abs{\sum\limits_{j=1}^{N} f'(u)\average{\eta}\jump{\xi^u}_{j+\frac{1}{2}} + ((f'(u)-f'(u_{j}))\eta^u,\xi^u_x)_{I_j}}\\
      &\leq C_*h\norm{\eta}_{\infty}\norm{{\xi^u}}_{L^2(\partial{I})} + C_*h\norm{\eta^u}_{L^2(I)}\norm{\xi^u_x}_{L^2(I)}
    \leq C_*h^{k+1}\norm{\xi^u}_{L^2(I)}.
\end{align*}
Owing to the property of projection, notice that $(f'(u_j)\eta^u,\xi^u_x)_{I_j} = 0$, this is the reason for the first inequality. Because of the estimate $f'(u)-f'(u_{j}) = \mathcal{O}(h)$, we obtain the second inequality. And then the inverse inequalities \eqref{eqn:inverse inequality} are applied for deriving the final result.

\item $O_2$ term:
After a simple integration by parts, the $O_2$ term becomes
\begin{align*}
O_2 = \sum\limits_{j=1}^{N} \frac{1}{2}(f'(u)_x\xi^u, \xi^u)_{I_j} + f'(u)(\xi^{ref}-\average{\xi^u} )\jump{\xi^u}_{j+\frac{1}{2}}
    \triangleq O^1_2 + O^2_2.
\end{align*}
The first term $O^1_2$ can be easily controlled by $C_*\norm{\xi^u}^2_{L^2(I)}$. We mainly focus on the second term $O^2_2$. Due to the classification of the sign of $f'(u)$, it needs to be discussed separately. For the case $f'(u) > 0$ on $I_j \cup I_{j+1}$,
we have the negative term $-\frac{1}{2} f'(u)\jump{\xi^u}^2_{j+\frac{1}{2}}$ in $O^2_2$, which can be ignored. For the case $f'(u) \leq 0$ on $I_j \cup I_{j+1}$, the second term $O^2_2$ is simplified as

\begin{align*}
 f'(u)( \xi^{+} - \average{\xi^u} )\jump{\xi^u}_{j+\frac{1}{2}}
            & = f'(u)( \xi^{+} - \xi^{u,+} + \frac{1}{2}{\jump{\xi^u}} )\jump{\xi^u}_{j+\frac{1}{2}} \\
            & = \frac{1}{2}f'(u)(\jump{\xi^u} + (\xi^{+} - \xi^{u,+}))^2 - \frac{1}{2}f'(u)(\xi^{+} - \xi^{u,+})^2\\
            & {\leq - \frac{1}{2}f'(u)(\eta^{+} - \eta^{u,+})^2 \leq C(\norm{\eta^u}^2_{L^2(\partial I_j)} + \norm{\eta}^2_{L^2(\partial I_j)})}.
\end{align*}
Because of the negative term $f'(u)(\jump{\xi^u} + (\xi^{+} - \xi^{u,+}))^2$, we can omit it directly. The fact $\xi - \xi^{u} = (\mathcal{P}^+u - \mathcal{P}^-u) = (\eta- \eta^{u})$ derives the final estimate in the above inequality.

For the last case, i.e. $f'(u)$ change its sign, the term $f'(u)$ can provide an extra $h$, and then the situation becomes
\begin{align*}
\sum\limits_{j=1}^{N} f'(u)( \average{\xi} - \average{\xi^u} )\jump{\xi^u}_{j+\frac{1}{2}}
            \leq Ch({\norm{\eta^u}_{\infty} + \norm{\eta}_{\infty}})\norm{\xi^u}_{L^2(\partial I)} \leq Ch^{2k+2} + C\norm{\xi^u}^2_{L^2(I)}
\end{align*}
Hence we conclude that for $f(u)$ with all non-negative derivative $f'(u)$, there is
\begin{align*}
O_2 \leq Ch^{2k+2} + C\norm{\xi^u}^2_{L^2(I)}
\end{align*}
However, if there exist some points satisfying $f'(u) < 0$, then we have
\begin{align*}
O_2 \leq C( \norm{\xi^u}^2_{L^2(I)} + \norm{\eta^u}^2_{L^2(\partial I)} + \norm{\eta}^2_{L^2(\partial I)} )\leq C( \norm{\xi^u}^2_{L^2(I)} + Ch^{2k+1})  .
\end{align*}
Therefore, for the specific form $f(u) = \frac{1}{p}u^p$, the estimate for term $O_2$ is finished.

\item $O_3$ term: With the definition $e = u-u_h$,
\begin{align*}
O_3 
&\leq C_*\norm{e}_{\infty}(\norm{e}_{L^2(\partial I)}\norm{\xi^u}_{L^2(\partial I)} + \norm{e}_{L^2{(I)}}\norm{\xi^u_x}_{L^2(\partial I)})\\
&\leq  C_*\norm{e}_{\infty}(\norm{\xi^u-\eta^u}_{L^2(\partial I)}\norm{\xi^u}_{L^2(\partial I)} + \norm{\xi^u-\eta^u}_{L^2{(I)}}\norm{\xi^u_x}_{L^2{(I)}})\\
& \leq C_*h^{-1}\norm{e}_{\infty}(\norm{\xi^u}_{L^2(I)} + Ch^{2k+2})\\
& { \leq C_*h^{-1}\left(\norm{\xi^u}_{\infty} + \norm{\eta^u}_{\infty}\right)(\norm{\xi^u}^2_{L^2(I)} + Ch^{2k+2})}\\
& {\leq Ch^{-\frac{3}{2}}\norm{\xi^u}^3_{L^2(I)} +C\norm{\xi^u}^2_{L^2(I)} +Ch^{2k+2}} ,
\end{align*}
where the last inequality requires small enough $h$ and $k \geq 1$.
\end{itemize}

Subsequently, we divide the term $\mathcal{T}_2$ into $O_4,  O_5$
\begin{align*}
 \mathcal{T}_2 &= \sum\limits_{j=1}^{N}(f(u_h^{ref})- f(u_h^*))\jump{\xi^u}_{j+\frac{1}{2}} + \sum\limits_{j=1}^{N} (f(u_h^*) - \widehat{f(u_h)})\jump{\xi^u}_{j+\frac{1}{2}} \triangleq O_4 + O_5
\end{align*}
where $u_h^*$ depends on the sign of $f'(\cdot)$ between $(u^+_h)_{j+\frac{1}{2}}$ and $(u^-_h)_{j+\frac{1}{2}}$,
\begin{align*}
u_h^*|_{j+\frac{1}{2}}  = \begin{cases} (u_h^+)_{j+\frac{1}{2}}, \ &\text{if} \ f'(\cdot) < 0 \ \text{between}\  (u_h^+)_{j+\frac{1}{2}} \ \text{and}\ (u_h^-)_{j+\frac{1}{2}} \\
                      (u_h^-)_{j+\frac{1}{2}},\ & \text{if}\ f'(\cdot) > 0 \ \text{between}\  (u_h^+)_{j+\frac{1}{2}} \ \text{and}\ (u_h^-)_{j+\frac{1}{2}} \\
                      \average{u_h}_{j+\frac{1}{2}}, \ & \text{otherwise}
        \end{cases}. \tag{A.3.3}
\end{align*}

\begin{itemize}
\item $O_4$ term:
Compared with the classifications in (2.25) and (A.3.3), if the signs of derivatives $f'(\cdot)$ are consistent, then the term $O_4$ vanishes. Otherwise, there must be a zero point of $f'(\cdot)$ in the interval covered by the $(u^+_h)_{j+\frac{1}{2}}$,  $(u^-_h)_{j+\frac{1}{2}}$ and the exact solution $u$ in $I_j \cup \I_{j+1}$. Taking $u_{j+\frac{1}{2}}$ as a start point, we can obtain an upper bound for the length of this interval as $Ch + \norm{e}_{\infty}$. Hence, we have $f'(u_h^{ref}) \leq C_*(Ch + \norm{e}_{\infty})$. Then the following estimate holds
\begin{align*}
O_4
&\leq \sum\limits_{j=1}^{N} \Big(\abs{f'(u_h^{ref})} \abs{u_h^{ref}- u_h^*} + C_*(u_h^{ref}- u_h^*)^2\Big)\jump{\xi^u}_{j+\frac{1}{2}}\notag \\
&\leq \Big(\max_{j=[1,N]}\abs{f'(u_h^{ref})}\norm{u_h^{ref}- u_h^*}_{L^2(\partial I)} + C_*\norm{u_h^{ref}- u_h^*}^2_{L^2(\partial I)}\Big)\norm{\xi^u}^2_{L^2(\partial I)} \notag  \\
&\leq C_*(Ch + \norm{e}_{\infty})\norm{\xi^u-\eta^u}_{L^2(\partial I)}\norm{\xi^u}^2_{L^2(\partial I)}
\leq C_*(1 + h^{-1}\norm{e}_{\infty}) (\norm{\xi^u}^2_{L^2(I)} + Ch^{2k+2})\\
&{\leq Ch^{-\frac{3}{2}}\norm{\xi^u}^3_{L^2(I)} +C\norm{\xi^u}^2_{L^2(I)} + Ch^{2k+2}},
\end{align*}
where we used $\abs{u_h^{ref}- u_h^*} \leq \abs{\jump{u_h}} = \abs{\jump{e}} $ in the third inequality.
\item  $O_5$ term:
Due to the properties of the upwind flux (2.5), the $O_5$ term will degenerate to zero if the sign of $f'(u)$ does not change between $(u^+_h)_{j+\frac{1}{2}}$ and $(u^-_h)_{j+\frac{1}{2}}$. For the case with the changed sign, we can derive the following estimate form Taylor expansion
\begin{align*}
O_5 &= \sum\limits_{j=1}^{N} (f(\average{u_h}) - f(\zeta))\jump{\xi^u}_{j+\frac{1}{2}} = \sum\limits_{j=1}^{N}(f'(\average{u_h})(\average{u_h}-\zeta) -\frac{1}{2}f''_u(\average{u_h}-\zeta)^2)\jump{\xi^u}_{j+\frac{1}{2}}\\
& \leq \sum\limits_{j=1}^{N}(Ch\abs{\jump{u_h}} + C_*\abs{\jump{u_h}}^2)\jump{\xi^u}_{j+\frac{1}{2}}\leq C_*(1 + h^{-1}\norm{e}_{\infty}) (\norm{\xi^u}^2_{L^2(I)} + Ch^{2k+2})\\
&{ \leq Ch^{-\frac{3}{2}}\norm{\xi^u}^3_{L^2(I)}+C\norm{\xi^u}^2_{L^2(I)} + Ch^{2k+2}},
\end{align*}
where $\zeta$ is between $(u^+_h)_{j+\frac{1}{2}}$ and $(u^-_h)_{j+\frac{1}{2}}$. The derivation of the
above inequalities has used $\abs{\jump{u_h}} = \abs{\jump{u_h- u}} = \abs{\jump{e}}$ and the Young's inequality.
\end{itemize}
We have now accomplished the proof for Lemma \ref{lemma:H_estimate2}.
{
\begin{remark}
 In \cite{Zhang2004_SIAM,Zhang2010_SIAM}, the authors have proved the optimal orders of accuracy for upwind fluxes in hyperbolic conservation laws. The differences between our cases and theirs are directions of the test functions, which cause suboptimal order of accuracy for even $p$ in the above term $O_2$.
\end{remark}}

\subsection{Proof of Lemma \ref{lemma:hc1}}
\label{proofhc1}

First, along the same line of Lemma \ref{lemma:H_estimate2}, we can divide the $\mathcal{T}_1$ into three analogs $O_1, O_2, O_3$ as $\mathcal{T}_1$ in (A.3.1), where the ``$ref$" is considered as an average $\average$. The estimation for the term $O_1$ shows order of $h^{2k}$ since
\begin{align*}
 O_1  \leq &\abs{\sum\limits_{j=1}^{N} f'(u)\average{{\eta}^u}\jump{{\xi}^u}_{j+\frac{1}{2}} +
  ((f'(u)-f'(u_{j})){\eta}^u,{\xi}^u_x)_{I_j}}\\
     &\leq C\norm{{\eta}^u}_{\infty}\norm{{\xi}^u}_{L^2(\partial{I})} + C_*h\norm{{\eta}^u}_{L^2(I)}\norm{{\xi}^u_x}_{L^2(I)}
       \leq Ch^{2k}+ C_*\norm{{\xi}^u}^2_{L^2(I)}
\end{align*}
After an integration by parts, the $O_2$ term can be simplified as $ \frac{1}{2}(f'(u)_x{\xi}^u, {\xi}^u) \leq C_*\norm{{\xi}^u}^2_{L^2(I)}$. As to $O_3$, it is identical with the proof of Lemma \ref{lemma:H_estimate2} which we omit here. Therefore, we have
\begin{align*}
\mathcal{T}_1 &\leq Ch^{2k}+ C_*\norm{{\xi}^u}^2_{L^2(I)} + C_*h^{-1}\norm{e}_{\infty}(\norm{\xi^u}^2_{L^2(I)} + Ch^{2k+2})\\
 & {\leq Ch^{2k}+ C_*\norm{{\xi}^u}^2_{L^2(I)} + Ch^{-\frac{3}{2}}\norm{\xi^u}^3_{L^2(I)} }
\end{align*}

Next, the remaining term $\mathcal{T}_2$  is nothing but a residual term of midpoint formula for numerical integration which can be obtained by Taylor expansion, thus
\begin{align*}
\mathcal{T}_2 &\leq \sum\limits_{j=1}^{N} C_*\jump{u_h}^2\jump{{\xi}^u}_{j+\frac{1}{2}} \leq  C_*\norm{e}_{\infty}\norm{u_h-u}_{L^2(\partial I)}\norm{{\xi}^u}_{L^2(\partial I)} \\
&\leq C_*h^{-1}\norm{e}_{\infty}(\norm{\xi^u}^2_{L^2(I)} + Ch^{2k+2}) {\leq Ch^{-\frac{3}{2}}\norm{\xi^u}^3_{L^2(I)} +C\norm{\xi^u}^2_{L^2(I)} + Ch^{2k+2}}
\end{align*}
Finally, combining the above estimates, Lemma \ref{lemma:hc1} is verified.
{
\begin{remark}
In \cite{Bona2013_MC}, the authors have given the result of the $k$-$th$ order of accuracy for the
nonlinear part $f(u) = \frac{1}{p}u^p, p \geq 2$. Here in our proof, we draw the same conclusion by another way to verify the above result.
\end{remark}}

{
\subsection{Proof of Lemma \ref{lemma}}
\label{prooflemma}
Referring to the technique in \cite{Liu2015_NM}, integration of \eqref{lemma:condition} gives
\begin{align*}
\norm{v(t,\cdot)}^2_{L^2(I)} \leq CG(t) \tag{A.7.1}
\end{align*}
where
\begin{align*}
G(t) = h^{2k+\tilde{\mu}} + \int_0^t \norm{v^u(\tau,\cdot)}^2_{L^2(I)} + h^{-\frac{3}{2}}\norm{v(\tau,\cdot)}^3_{L^2(I)}d\tau.
\end{align*}
Since the following relation
\begin{align*}
G'(t) \leq C(G(t) + h^{-\frac{3}{2}}G(t)^{\frac{3}{2}})
\end{align*}
holds, we obtain a significant estimate after integrating the above inequality
\begin{align*}
Q(G/G(0))<CT
\end{align*}
where
\begin{align*}
Q(y) = \int_1^y \frac{1}{\zeta + h^{-\frac{3}{2}}\sqrt{G(0)}\zeta^{\frac{3}{2}} }d\zeta = \int_1^y \frac{1}{\zeta + h^{k -\frac{3-\tilde{\mu}}{2}}\zeta^{\frac{3}{2}} }d\zeta.
\end{align*}
Note that the derivatives $Q'(y)$ is positive and bounded for all $y \geq 1, k \geq \frac{3-\tilde{\mu}}{2} $. There must exists $\tilde{C}$ such that
\begin{align*}
Q(\tilde{C})  = CT.
\end{align*}
By the monotonicity of $Q$, we conclude that $G\leq \tilde{C}G(0)$. With the aid of (A.7.1), the result of Lemma \ref{lemma} is obtained.
}

\subsection{Proof of Lemma \ref{lemma:another_energy2}}
\label{proofanother_energy2}
We take test functions in equations \eqref{scheme:FW2DG} and \eqref{scheme:FW1DG_t} as
\begin{align*}
\varphi = \phi = (u_h)_t \ \text{and}\ s_h,  \ \psi = \vartheta = (r_h)_t \ \text{and}\ p_h.
\end{align*}
Summed up these eight corresponding equalities over all intervals, it gives
\begin{align*}
&((u_h)_t,(u_h)_t)_{I} + ((r_h)_t,(r_h)_t)_{I} + (s_h,s_h)_{I} + (q_h,q_h)_{I}+ 2((u_h)_t,s_h)_{I} + 2((r_h)_t,p_h)_{I}  \\
&- \mathcal{L}^-((r_h)_t, (u_h)_t)- \mathcal{L}^+((u_h)_t, (r_h)_t) - \mathcal{L}^-((r_h)_t, s_h)- \mathcal{L}^+(s_h, (r_h)_t) \tag{A.6.1} \\
&- \mathcal{L}^-(p_h, (u_h)_t)- \mathcal{L}^+((u_h)_t, p_h) - \mathcal{L}^-(p_h, s_h)- \mathcal{L}^+(s_h, p_h) =-(u_h,(r_h)_t + p_h)_{I}
\end{align*}
By the properties of the operators $\mathcal{L}^+,\mathcal{L}^-$, we conclude that the
energy equality \eqref{eqn:another_energy2} can be established for the
dissipative scheme $\mathcal{D}2$. The same derivation works for the conservative scheme $\mathcal{C}2$ with the
operator $\mathcal{L}^c$ in (A.6.1) as well.

\subsection{Proof of Lemma \ref{lemma:b2}}
\label{proofb2}
The bilinear term  $\bar{\mathcal{B}}_j$ can be written as
\begin{align*}
&\bar{\mathcal{B}}_j(\xi^u- \eta^u, \xi^r- \eta^r, \xi^p- \eta^p,\xi^s- \eta^s ;  \bm{\xi^3} ,\bm{\xi^4},\bm{\xi^3},\bm{\xi^4},\xi^u)\notag \\
&=  \bar{\mathcal{B}}_j(\xi^u, \xi^r, \xi^p,\xi^s;   \bm{\xi^3} ,\bm{\xi^4},\bm{\xi^3},\bm{\xi^4},\xi^u)- \bar{\mathcal{B}}_j(\eta^u, \eta^r, \eta^p,\eta^s;  \bm{\xi^3} ,\bm{\xi^4},\bm{\xi^3},\bm{\xi^4},\xi^u).\notag
\end{align*}
\begin{itemize}
\item For \eqref{eqn:B_estimate22d} of the dissipative scheme $\mathcal{D}2$:
By the same argument as that used for the $L^2$ dissipation and energy equality \eqref{eqn:another_energy2}, the first term of the above equation is
\begin{align*}
& \bar{\mathcal{B}}_j(\xi^u, \xi^r, \xi^p,\xi^s;  \bm{\xi^3} ,\bm{\xi^4},\bm{\xi^3},\bm{\xi^4},\xi^u) = (\xi^u_t,\xi^u)_{I_j} + (\xi^u, \xi^p + \xi^r_t)_{I_j} \\
&+ \norm{\xi^s + \xi^u_t }^2_{L^2(I_j)} + \norm{\xi^p + \xi^r_t}^2_{L^2(I_j)} + \frac{1}{2}(\jump{\xi^p} + \jump{\xi^r_t})^2_{j+\frac{1}{2}} + \frac{1}{2}(\jump{\xi^s} + \jump{\xi^u_t})^2_{j+\frac{1}{2}}.
\end{align*}
Similarly, with the properties of Gauss-Radau projections, the second term of $\bar{\mathcal{B}}_j$  can be obtained straightforwardly.
\item For \eqref{eqn:B_estimate22c} of conservative scheme $\mathcal{C}2$:
The $L^2$ conservation and energy equality \eqref{eqn:another_energy2} implies
\begin{align*}
\bar{\mathcal{B}}_j({\xi}^u, {\xi}^r,& {\xi}^p,{\xi}^s;   \bm{\xi^3} ,\bm{\xi^4},\bm{\xi^3},\bm{\xi^4},{\xi}^u)\\
&= ({\xi}_t^u,{\xi}^u)_{I_j}  + \norm{{\xi}^s + {\xi}^u_t}^2_{L^2(I_j)} + \norm{{\xi}^p + {\xi}^r_t}^2_{L^2(I_j)} +({\xi}^u, {\xi}^p + {\xi}^r_t)_{I_j}.
\end{align*}
Owing to the $L^2$ projection, all integral terms in the second term of $\bar{\mathcal{B}}_j$ vanish and we
are left with the remaining boundary terms.
\end{itemize}

\end{appendix}

\end{document}